\newenvironment{customthm}[1]
  {\innercustomthm}
{\endinnercustomthm}
\newenvironment{customlemma}[1]
  {\innercustomlemma}
{\endinnercustomlemma}
\DeclareMathOperator{\Span}{Span}
\title{Advancing the lower bounds:\\
An accelerated, stochastic, second-order method with optimal adaptation to inexactness}
\newtheorem{theorem}{Theorem}[section]
\newtheorem{lemma}[theorem]{Lemma}
\newtheorem{corollary}[theorem]{Corollary}
\newtheorem{definition}[theorem]{Definition}
\newtheorem{assumption}[theorem]{Assumption}
\newcommand{\eqdef}{:=}
\def\bk{\bar \kappa}
\newcommand{\xmark}{\ding{55}}%
\newcommand{\CRN}{\text{CRN }}
\def\aa#1{{\color{black}#1}} 
\def\aaa#1{{\color{black}#1}} 
\def\dk#1{{\color{black}#1}}
\author{%
A. Agafonov \textsuperscript{1, 2} \And D. Kamzolov\textsuperscript{1}\And A. Gasnikov \textsuperscript{2,4,5} \And A. Kavis \textsuperscript{3} \And K. Antonakopoulos \textsuperscript{3} \And V.  Cevher\textsuperscript{3} \And M. Tak\'a\v{c}\textsuperscript{2}\\ \\ 
\textbf{\textsuperscript{1}}The University of Hong Kong \quad \textbf{\textsuperscript{2}}ByteDance Inc. \quad \textbf{\textsuperscript{3}}Apple Inc. \\
\texttt{\{lzheng2,lpk\}@cs.hku.hk} \\
\texttt{jianbo.yuan@bytedance.com} \quad \texttt{mr.chongwang@apple.com}
}
\author{Artem Agafonov \textsuperscript{1, 2}, Dmitry Kamzolov\textsuperscript{1}, Alexander Gasnikov \textsuperscript{2,4,5}, Ali Kavis \textsuperscript{3}, 
\And Kimon Antonakopoulos \textsuperscript{3}, Volkan  Cevher\textsuperscript{3}, Martin Tak\'a\v{c}\textsuperscript{1}  \\ \\
\textbf{\textsuperscript{1}} Mohamed bin Zayed University of Artificial Intelligence, Abu Dhabi, UAE\\ \textbf{\textsuperscript{2}} Moscow Institute of Physics and Technology, Dolgoprudny, Russia\\
\textbf{\textsuperscript{3}} Laboratory for Information and Inference Systems, IEM STI EPFL, Lausanne, Switzerland\\
\textbf{\textsuperscript{4}} Innopolis University, Kazan, Russia\\
\textbf{\textsuperscript{5}} Skoltech, Moscow, Russia
\vspace{-0.15in} 
}
\begin{document}

\maketitle
\begin{abstract}
\if 0
We present a new accelerated stochastic second-order method that is robust to both gradient and Hessian inexactness, typical in machine learning. We establish theoretical lower bounds and prove that our algorithm is the first optimal method in this key setting. We further introduce a tensor generalization for inexact higher-order derivatives, called Stochastic AcceleRated TEnsor Method (S-ARTEM). When the oracles are non-stochastic, S-ARTEM matches the global convergence of ARTEM despite having the ability to handle inexactness. Both algorithms allow for approximate solutions of their auxiliary subproblems with verifiable conditions on the accuracy of the solution.

\fi

We present a new accelerated stochastic second-order method that is robust to both gradient and Hessian inexactness, which occurs typically in machine learning. We establish theoretical lower bounds and prove that our algorithm achieves optimal convergence in both gradient and Hessian inexactness in this key setting.  We further introduce a tensor generalization for stochastic higher-order derivatives. When the oracles are non-stochastic, the proposed tensor algorithm matches the global convergence of Nesterov Accelerated Tensor method. Both algorithms allow for approximate solutions of their auxiliary subproblems with verifiable conditions on the accuracy of the solution.

\if
    We present the first accelerated stochastic second-order method that achieves optimal convergence in both gradient and Hessian inexactness.
    Compared to previous state-of-the-art algorithms, our proposed method improves the convergence rate for inaccuracies in gradients and Hessians, while matching their convergence rate in the term corresponding for the exact convergence.
    We provide lower bounds on the corresponding convergence terms for stochastic gradient and inexact Hessian. 
    We further introduce a tensor generalization for inexact higher-order derivatives. 
    Both algorithms allow for approximate solutions of their auxiliary subproblems with verifiable conditions on the accuracy of the solution.
\fi
\end{abstract}
 \vspace{-13pt}
\section{Introduction}
 \vspace{-8pt}

In this paper, we consider the following general convex optimization problem:
\begin{equation}\label{eq:main_problem}
    \min\limits_{x\in \R^d} f(x),
\end{equation}
where $f(x)$ is a convex and sufficiently smooth function.
We assume that a solution $x^{\ast}\in \R^d$ exists and we denote $f^{\ast}:=f(x^{\ast})$. We define $R = \|x_0 - x^{\ast}\|$ as a distance to the solution.
\begin{assumption}\label{as:lip}
The function $f(x)\in C^2$ has $L_2$-Lipschitz-continuous Hessian if for any $x,y \in \R^d$
\begin{equation*}
\label{eq:L2}
    \|\nabla^2 f(x) - \nabla^2 f(y) \| \leq L_2\|x-y\|.
\end{equation*}
\end{assumption}
\vspace{-0.3 cm}
Since the calculation of an exact gradient is very expensive or impossible in many applications in several domains, including machine learning, statistics, and signal processing, efficient methods that can work with inexact stochastic gradients are of great interest. 
\begin{assumption}
    \label{as:1ord_stoch}
        For all $x \in \R^d$, we assume that stochastic gradients $g(x, \xi)\in \R^d$ satisfy
        \begin{equation}
        \label{eq:stoch_grad_def}
                \mathbb{E}[g(x, \xi) \mid x] = \nabla f(x), \quad
                \mathbb{E}\left[\|g(x, \xi)-  \nabla f(x)\|^2 \mid x\right] \leq \sigma_1^2.
        \end{equation}
\end{assumption}
\vspace{-0.3 cm}
Extensive research has been conducted on first-order methods, both from a theoretical and practical perspective. For $L_1$-smooth functions with stochastic gradients characterized by a variance of $\sigma_1^2$, lower bound $\Omega \ls \tfrac{\sigma_1 R}{\sqrt{T}} + \tfrac{L_1R^2}{T^2}\rs$ has been established by \cite{nemirovski1983problem}. For $f(x)=\E [F(x,\xi)]$, the stochastic approximation (SA) was developed, starting from the pioneering paper by \cite{robbins1951stochastic}. Important improvements of the SA were developed by \citet{polyak1990new, polyak1992acceleration, nemirovski2009robust}, where longer stepsizes with iterate averaging and proper step-size modifications were proposed, obtaining the rate $O \ls \tfrac{\sigma_1 R}{\sqrt{T}} + \tfrac{L_1R^2}{T}\rs$. The optimal method matching the lower bounds has been developed by \cite{lan2012optimal} with a convergence rate $O \ls \tfrac{\sigma_1 R}{\sqrt{T}} + \tfrac{L_1R^2}{T^2}\rs$. However, the literature on second-order methods is significantly limited for the study of provable, globally convergent stochastic second-order methods for convex minimization. 
\\[2pt]
{\bf Second-order methods.} Although second-order methods have been studied for centuries \citep{newton1687philosophiae, raphson1697analysis, simpson1740essays, kantorovich1949newton,more1977levenberg, griewank1981modification}, most of the results are connected with local quadratic convergence. The significant breakthroughs regarding global convergence have been achieved only recently, starting from the paper on Cubic Regularized Newton (CRN) method by \cite{nesterov2006cubic}, the first second-order method with a global convergence rate $O \ls \tfrac{L_2R^3}{T^2} \rs$. Following this work, \cite{nesterov2008accelerating} proposes an acceleration mechanism on top of CRN and achieves the convergence rate of $O \ls \tfrac{L_2R^3}{T^3} \rs$, going beyond the $\dk{\Omega}(1/T^2)$ lower bound for first-order methods. Another cornerstone in the field is the work by \cite{monteiro2013accelerated}, which achieves lower complexity bound $\Omega \ls\tfrac{L_2R^3}{T^{7/2}}\rs$ \citep{pmlr-v75-agarwal18a,arjevani2019oracle} up to a logarithmic factor, for the first time in the literature. The gap between upper and lower bounds was closed only in 2022 in subsequent works of \cite{kovalev2022first, carmon2022optimal}. \\[2pt]
One of the main limitations of the second-order methods is a high per-iteration cost as they require computation of exact Hessian. Therefore, it is natural to use approximations of derivatives instead of their exact values. 
In \citep{ghadimi2017second}, \CRN method with $\delta_2$-inexact Hessian information and its accelerated version were proposed, achieving convergence rate $O\ls \frac{\delta_2R^2}{T^2} + \frac{L_2R^3}{T^3} \rs$. This algorithm was later extended by \citet{agafonov2023inexact} to handle $\delta_1$-inexact gradients (and high-order derivatives) with a resulting convergence rate of $O\ls \delta_1R + \frac{\delta_2R^2}{T^2} + \frac{L_2R^3}{T^3} \rs$. A recent paper by \citet{antonakopoulos2022extra} proposes a stochastic \textit{adaptive} second-order method based on the extragradient method without line-search and with the convergence rate $O\ls \frac{\sigma_1R}{\sqrt{T}} + \frac{\sigma_2R^2}{T^{3/2}} + \frac{L_2R^3}{T^3} \rs$ when gradients and Hessians are noisy with variances $\sigma_1^2$ and $\sigma_2^2$. In the light of these results, we identify several shortcomings and open questions: 
\begin{center}
    \textit{What are the lower bounds for inexact second-order methods?\\What is the optimal trade-off between inexactness in the gradients and the Hessian?}
\end{center}

In this work, we attempt to answer these questions in a systematic manner. Detailed descriptions of other relevant studies can be found in Appendix \ref{app:rel_works}. 
\begin{table}[h]
    \centering
    \caption{
    Comparison of existing results for second-order methods under inexact feedback.
    $T$ denotes the number of iterations, and $L_2$ represents the Lipschitz constant of the Hessian.
    }
    \resizebox{\columnwidth}{!}{
        \begin{tabular}{c c c c c }
            \toprule
            Algorithm & Inexactness & \makecell{Gradient \\ convergence} & \makecell{Hessian \\ convergence} & \makecell{Exact \\ convergence} \\
            \midrule
            
            \makecell{Accelerated Inexact Cubic Newton \\ \citep{ghadimi2017second}}
            & 
            \makecell{
            exact gradient \\ 
            $\delta_2$-inexact Hessian 
            \tablefootnote{
                $\delta_2$-inexact Hessian: $\frac{\delta_2}{2} I  \preceq H_x - \nabla^2 f(x)  \preceq \delta_2 I$
            }
            }  
            & 
            \xmark 
            & 
            $O\ls \frac{\delta_2 R^2}{T^2} \rs$ & $O\ls \frac{L_2 R^3}{T^3} \rs$ 
            \\
            \hdashline
             \makecell{Accelerated Inexact Tensor Method  
             \tablefootnote{
                  It is worth noting that the Accelerated Inexact Tensor Method can also be applied to the case of stochastic derivatives. Specifically, when $p=2$, the total number of stochastic gradient computations is on the order of $O(\e^{-7/3})$, while the total number of stochastic Hessian computations is on the order of $O(\e^{-2/3})$ \cite{agafonov2023inexact}. In our work, we propose an algorithm that achieves the same number of stochastic Hessian computations but significantly improves the number of stochastic gradient computations to $O(\e^{-2})$.
             }\\
            \citep{agafonov2023inexact}
            } 
            & 
            \makecell{
            $\delta_1$-inexact gradient 
            \tablefootnote{
                $\delta_1$-inexact gradient: $\|g_x - \nabla f(x)\| \leq \delta_1$
            } 
            \\
            $\delta_2$-inexact Hessian 
            \tablefootnote{ 
                $\delta_2$-inexact Hessian: $\|\left(H_{x} - \nabla^2 f(x)\right)(y - x)\| \leq \delta_2 \|y-x\|$}
            }  
            & 
            $O(\delta_1R)$
            & 
            $O\ls \frac{\delta_2 R^2}{T^2} \rs$ 
            & 
            $O\ls \frac{L_2 R^3}{T^3} \rs$ \\
            \hdashline
            \makecell{Extra-Newton \\ \citep{antonakopoulos2022extra}} & 
            \makecell{
             stochastic gradient \eqref{eq:stoch_grad_def}
            \\
            unbiased stochastic Hessian 
            \tablefootnote{ 
                Unbiased stochasic Hessian: $ \mathbb{E}[H(x, \xi) \mid x]=\nabla^2 f(x)$, ~~
                $\mathbb{E}\left[\left\|H(x, \xi)-\nabla^2 f(x)\right\|^2 \mid x\right] \leq \sigma_2^2$ 
            }
            }
            & 
            $O\ls\frac{\sigma_1 R}{\sqrt{T}}\rs$ 
            & 
            $O\ls \frac{\sigma_2 R^2}{T^{3/2}} \rs$ 
            & 
            $O\ls \frac{L_2 R^3}{T^3} \rs$ \\
            \hdashline
            \makecell{Accelerated Stochastic  Second-order method \\{[This Paper]}} &
            \makecell{
            stochastic gradient \eqref{eq:stoch_grad_def}\\
            stochastic Hessian \eqref{eq:stoch_hessian}
            }
            & 
            $O\ls\frac{\sigma_1 R}{\sqrt{T}}\rs$ 
            & 
            $O\ls \frac{\sigma_2 R^2}{T^2} \rs$
            \tablefootnote{
                Under assumption of $\delta_2$-inexact Hessian the convergence is $O\ls \frac{\delta_2 R^2}{T^2} \rs$ \label{fn:other}
            } 
            & 
            $O\ls \frac{L_2 R^3}{T^3} \rs$ 
            \\
            \hdashline
            \makecell{Lower bound \\ {[This Paper]}} 
            &
            \makecell{
            stochastic gradient \eqref{eq:stoch_grad_def}\\
            stochastic Hessian \eqref{eq:stoch_hessian}
            } 
            & 
            $\Omega\ls\frac{\sigma_1 R}{\sqrt{T}}\rs$ 
            & 
            \makecell{$\Omega\ls \frac{\sigma_2 R^2}{T^2} \rs $}
            & 
            \makecell{$\Omega\ls \frac{L_2 R^3}{T^{7/2}} \rs $}  
            \\
            \bottomrule
        \end{tabular}
    }
    \label{tab:comparison}
\end{table}

{\bf Contributions.} We summarize our contributions as follows:
\begin{enumerate}[noitemsep,topsep=0pt,leftmargin=12pt]
    \item 
    We propose an accelerated second-order algorithm that achieves the convergence rate of $O\ls \frac{\sigma_1R}{\sqrt{T}} + \frac{\sigma_2 R^2}{T^2} + \frac{L_2 R^3}{T^3} \rs$ for stochastic Hessian with variance $\sigma_2^2$ and $O\ls \frac{\sigma_1 R}{\sqrt{T}} + \frac{\delta_2 R^2}{T^2} + \frac{L_2 R^3}{T^3} \rs$ for $\delta_2$-inexact Hessian, improving the existing results~\citep{agafonov2023inexact, antonakopoulos2022extra} (see Table~\ref{tab:comparison}).
    
    \item 
    We prove that the above bounds are tight with respect to the variance of the gradient and the Hessian by developing a matching \aaa{theoretical complexity} lower bound (see Table~\ref{tab:comparison}).
    \item 
    Our algorithm involves solving a cubic subproblem that arises in several globally convergent second-order methods \citep{nesterov2006cubic, nesterov2008accelerating}. To address this, we propose a criterion based on the accuracy of the subproblem's gradient, along with a dynamic strategy for selecting the appropriate level of inexactness. This ensures an efficient solution of the subproblems without sacrificing the fast convergence of the initial method.
    \item 
    We extend our method for higher-order minimization with stochastic/inexact oracles. We achieve the $O\ls \frac{\sigma_1 R}{\sqrt{T}} + \sum \limits_{i=2}^p \frac{\delta_iR^i}{T^{i}} + \frac{L_{p}R^{p+1}}{T^{p+1}} \rs$ rate with $\delta_i$-inexact $i$-th derivative.
    \item 
    We propose a restarted version of our algorithm for strongly convex minimization, which exhibits a linear rate. Via a mini-batch strategy, we demonstrate that the total number of Hessian computations scales linearly with the desired accuracy $\e$.
\end{enumerate}

\vspace{-13pt}
\section{Problem statement and preliminaries} 
\vspace{-8pt}

\label{sec:problem}

{\bf Taylor approximation and oracle feedback.}
Our starting point for constructing second-order method is based primarily on the second-order Taylor approximation of the function $f(x)$ 
\begin{equation*}\label{eq:2ord_taylor}
    \Phi_{x}(y) \stackrel{\text { def }}{=} f(x)+\la \nabla f(x), y - x\ra + \tfrac{1}{2}\la y - x, \nabla^2 f(x)(y-x) \ra, \quad y \in \mathbb{R}^d.
\end{equation*}

In particular, since the exact computation of the Hessians can be a quite tiresome task, we attempt to employ more tractable inexact estimators $g(x)$ and $H(x)$ for the gradient and Hessian. These estimators are going to be the main building blocks for the construction of the "inexact" second-order Taylor approximation. Formally, this is given by:
 \begin{equation}\label{eq:2ord_approx_taylor}
    \phi_{x}(y)=f\left(x\right)+\la g(x), y - x\ra + \tfrac{1}{2}\la y - x,  H(x)(y-x) \ra, \quad y \in \mathbb{R}^d.
\end{equation}

Therefore, by combining Assumption \ref{as:lip} with the aforementioned estimators, we readily get the following estimation:
\begin{lemma}[\aaa{{\citep[Lemma 2]{agafonov2023inexact}}}]
    \label{lm:2ord_bound}
     Let Assumption \ref{as:lip} hold. Then, for any $x,y \in \mathbb{R}^d$, we have
    \begin{equation*}
    \begin{split}\label{eq:func_bnd}
         |f( y) - \phi_{{x}}(y)|  
         \leq 
         \ls
         \|g(x) - \nabla f(x)\|
         + 
         \tfrac{1}{2}\|\ls H(x) - \nabla^2 f(x)\rs (y-x)\|
         \rs\|y-x\| 
         +
         \tfrac{L_2}{6}\|y - x\|^{3}.
     \end{split}
\end{equation*}

\begin{equation*}
    \begin{split}\label{eq:1deriv_bnd}
         \|\nabla f(y) - \nabla \phi_{x}(y)\| 
         \leq  \|g(x) - \nabla f(x)\| + \|\ls H(x) - \nabla^2 f(x)\rs (y-x)\| + \tfrac{L_2}{3}\|y - x\|^{2}.
    \end{split}
\end{equation*}
\end{lemma}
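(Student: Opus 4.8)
The plan is to reduce both inequalities to the \emph{exact} second-order Taylor model $\Phi_x(y) = f(x) + \langle \nabla f(x), y-x\rangle + \tfrac12\langle y-x, \nabla^2 f(x)(y-x)\rangle$ and then to strip off the oracle errors additively. The organizing identity is the splitting
\[
f(y) - \phi_x(y) = \bigl(f(y) - \Phi_x(y)\bigr) + \bigl(\Phi_x(y) - \phi_x(y)\bigr),
\]
together with its gradient analogue. I would handle the two summands by completely different means: the first is a pure smoothness remainder governed by Assumption~\ref{as:lip}, while the second is an explicit algebraic expression in the oracle deviations $g(x) - \nabla f(x)$ and $H(x) - \nabla^2 f(x)$ that carries no regularity content at all.

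For the smoothness remainders I would invoke the integral forms of Taylor's theorem,
\[
f(y) - \Phi_x(y) = \int_0^1 (1-t)\,\bigl\langle y-x,\, \bigl(\nabla^2 f(x+t(y-x)) - \nabla^2 f(x)\bigr)(y-x)\bigr\rangle\, dt,
\]
\[
\nabla f(y) - \nabla\Phi_x(y) = \int_0^1 \bigl(\nabla^2 f(x+t(y-x)) - \nabla^2 f(x)\bigr)(y-x)\, dt,
\]
take norms, and bound the Hessian deviation pointwise by $\|\nabla^2 f(x+t(y-x)) - \nabla^2 f(x)\| \le L_2 t\|y-x\|$ from Assumption~\ref{as:lip}. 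The scalar integrals $\int_0^1 t(1-t)\,dt = \tfrac16$ and $\int_0^1 t\,dt = \tfrac12$ then deliver $\tfrac{L_2}{6}\|y-x\|^3$ for the function and $\tfrac{L_2}{2}\|y-x\|^2$ for the gradient.

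For the model-mismatch term I would simply expand $\Phi_x(y) - \phi_x(y) = \langle \nabla f(x) - g(x), y-x\rangle + \tfrac12\langle y-x, (\nabla^2 f(x) - H(x))(y-x)\rangle$ and apply Cauchy--Schwarz to each inner product, which reproduces precisely the factors $\|g(x) - \nabla f(x)\|\,\|y-x\|$ and $\tfrac12\|(H(x) - \nabla^2 f(x))(y-x)\|\,\|y-x\|$; differentiating instead gives $(\nabla f(x) - g(x)) + (\nabla^2 f(x) - H(x))(y-x)$, and the triangle inequality yields the corresponding two gradient terms. Combining each mismatch estimate with its smoothness remainder through one more triangle inequality assembles both claimed bounds.

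The only nonroutine item is the bookkeeping of constants, which is where I expect the real subtlety. The elementary argument above produces the gradient remainder coefficient $\tfrac{L_2}{2}$, and this is sharp: for $f(x) = \tfrac{L_2}{6}x^3$ with exact oracles at $x=0$ one has $\|\nabla f(y) - \nabla\phi_0(y)\| = \tfrac{L_2}{2}\|y\|^2$, which already saturates $\tfrac{L_2}{2}$. I would therefore double-check the statement's coefficient, since the $\tfrac{L_2}{3}$ written in the second inequality cannot hold in general and should read $\tfrac{L_2}{2}$; before finalizing I would confirm that the downstream uses of this lemma remain valid under the corrected constant.
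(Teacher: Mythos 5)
Your proof is correct and is essentially the paper's own argument: the paper simply defers to the cited reference and to the general $p$-th order Lemma~\ref{lm:p_ord_bound}, whose proof is exactly your decomposition into the exact Taylor remainder (bounded via the integral form and the Lipschitz Hessian, giving $\tfrac{L_2}{6}\|y-x\|^3$ and $\tfrac{L_2}{2}\|y-x\|^2$) plus the algebraic oracle-mismatch terms handled by Cauchy--Schwarz and the triangle inequality. Your flag on the constant is also right: the $\tfrac{L_2}{3}$ in the gradient bound is a typo for $\tfrac{L_2}{2}$, as confirmed both by specializing Lemma~\ref{lm:p_ord_bound} to $p=2$ (where $\tfrac{L_p}{p!}=\tfrac{L_2}{2}$) and by Lemma~\ref{lem:cubic_bound_acc} in the appendix, which already uses $\tfrac{L_2}{2}$, so the downstream analysis is unaffected.
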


Now, having established the main toolkit concerning the approximation of $f$ in the rest of this section, we introduce the blanket assumptions regarding the inexact gradients and Hessians (for a complete overview, we refer to Table~\ref{tab:comparison}). In particular, we assume that our estimators satisfy the following statistical conditions.
\begin{assumption}
    [Unbiased stochastic gradient with bounded variance and \aaa{stochastic Hessian with bounded variance}] 
    \label{as:2ord_stoch}
        For all $x \in \R^d$, stochastic gradient $g(x, \xi)$ satisfies ~\eqref{eq:stoch_grad_def} and stochastic Hessian $H(x, \xi)$ satisfies
        \begin{equation}
        \label{eq:stoch_hessian}
            \mathbb{E}\left[\|H(x, \xi) -  \nabla^2 f(x)\|_2^2 \mid x\right] \leq \sigma_2^2. 
        \end{equation}
\end{assumption}

\begin{assumption}
    [Unbiased stochastic gradient with bounded variance and inexact Hessian]
    \label{as:2ord_stoch_grad_inexact_hess} 
        For all $x \in \R^d$ stochastic gradient $g(x, \xi)$ satisfies~\eqref{eq:stoch_grad_def}. For given 
        $x,~y \in \R^d$ inexact Hessian $H(x)$ satisfies
        \begin{gather}  
                \|(H(x) - \nabla^2 f({x}))[y - x]\| \le \delta_2^{x, y} \|y - x\| \label{eq:inexact_hess_def}.
        \end{gather}
\end{assumption}

Assumptions~\ref{as:2ord_stoch} and ~\ref{as:2ord_stoch_grad_inexact_hess} differ from Condition 1 in \citep{agafonov2023inexact}  by the unbiasedness of the gradient. An unbiased gradient allows us to attain optimal convergence in the corresponding term $ O(1/\sqrt{T}) $, while an inexact gradient slows down the convergence to $O(1)$ since a constant error can misalign the gradient. Note, that we do not assume the unbiasedness of the Hessian in all assumptions.
Finally, note that Assumption~\ref{as:2ord_stoch_grad_inexact_hess} does not require~\eqref{eq:inexact_hess_def} to be met for all $x, ~ y \in \R^d$. Instead, we only consider inexactness along the direction $y - x$, which may be significantly less than the norm of the difference between Hessian and its approximation $H(x)$.
\\ 
{\bf Auxiliary problem.} Most second-order methods with global convergence require solving an auxiliary subproblem at each iteration. However, to the best of our knowledge, existing works that consider convex second-order methods under inexact derivatives do not account for inexactness in the solution of the subproblem. To address this gap, we propose incorporating a gradient criteria for the subproblem solution, given by
\begin{gather}
    \textstyle{\min} _{y \in \R^d} \omega_{x}(y)~~\text{such that}~~\|\nabla \omega_{x}(y)\| \leq \tau,
\end{gather}
where $\omega_{x}(y)$ is the objective of subproblem and $\tau \geq 0$ is a tolerance parameter. We highlight that this criterion is verifiable at each step of the algorithm, which facilitates determining when to stop. By setting a constant tolerance parameter $\tau$, we get the following relationship between the absolute accuracy $\epsilon$ required for the initial problem and $\tau$: $\tau = O\ls\epsilon^{\frac{5}{6}}\rs$. In practice, it may not be necessary to use a very small accuracy in the beginning. Later, we will discuss strategies for choosing the sequence of $\tau_t$ based on the number of iterations $t$.

\vspace{-13pt}
\section{The method}
\vspace{-8pt}
\label{sec:method}
In this section, we present our proposed method, dubbed as Accelerated Stochastic Cubic Regularized Newton's method. In particular, extending on recent accelerated second-order algorithms \citep{nesterov2021implementable, ghadimi2017second, agafonov2023inexact}, we propose a new variant of the accelerated cubic regularization method with stochastic gradients that achieves optimal convergence in terms corresponding to gradient and Hessian inexactness. Moreover, the proposed scheme allows for the approximate solution of the auxiliary subproblem, enabling a precise determination of the required level of subproblem accuracy.\\[2pt]
We begin the algorithm description by introducing the main step. Given constants $\bar{\delta} > 0$ and $M \geq L_2$, we define a model of the objective 
\begin{equation*}
    \omega^{M, \bar{\delta}}_x(y) \eqdef \phi_{x}(y) + \tfrac{\bar{\delta}}{2}\|x-y\|^2 + \tfrac{M}{6}\|x-y\|^{3}.
\end{equation*}
At each step of the algorithm, we aim to find $u \in \argmin_{y \in \R^d} \omega_{x}^{M, \bar{\delta}}(y)$. However, finding the respective minimizer is a separate challenge. Instead of computing the exact minimum, we aim to find a point $s \in \R^d$ with a small norm of the gradient.
\begin{definition} \label{def:inexact_sub}
    Denote by $s^{M, \bar \delta, \tau} (x)$ a $\tau$-inexact solution of subproblem, i.e. a point $s \eqdef s^{M, \bar \delta, \tau} (x)$ such that
    \begin{equation*}
        \|\nabla \omega^{M, \bar{\delta}}_{x}(s)\| \leq \tau.
    \end{equation*}
\end{definition}
\vspace{-7pt}
Next, we employ the technique of estimating sequences to propose the Accelerated Stochastic Cubic Newton method.
Such acceleration is based on aggregating stochastic linear models given by
\begin{equation*}
    l(x,y)=f(y)+\la g(y, \xi), x-y\ra
\end{equation*}
in function $\psi_t(x)$ \aaa{\eqref{eq:psi},~\eqref{eq:subproblem_2ord}}.
The method is presented in detail in Algorithm \ref{alg:inexact_acc_detailed}.

\begin{algorithm}
  \caption{Accelerated Stochastic Cubic Newton}\label{alg:inexact_acc_detailed}
  \begin{algorithmic}[1]
      \STATE \textbf{Input:} $y_0 = x_0$ is starting point; constants $M \geq 2L_2$; 
     non-negative non-decreasing sequences $\{\bar{\delta}_t\}_{t \geq 0}, \{\lambda_t\}_{t \geq 0}, \{\bk_2^t\}_{t \geq 0}$, $\{\bk_3^t\}_{t \geq 0}$, and
      \vspace{-6pt}
      \begin{equation}
        \label{eq:alphas}
          \alpha_t = \tfrac{3}{t + 3}, ~~~ A_t = \textstyle{\prod \limits_{j=1}^t}(1 -\alpha_j), ~~~ A_0 = 1,
      \end{equation}
      \vspace{-10pt}
      \begin{equation}\label{eq:psi}
        \psi_{0}(x):= \tfrac{\bk_2^{0}+\lambda_0}{2}\|x - x_0\|^2+\tfrac{\bk_3^{0}}{3}\|x - x_0\|^3 .
    \end{equation}
    \vspace{-0.1cm}
    \FOR{$t \geq 0$} 
        \STATE 
                \[v_t = (1 - \alpha_t)x_t + \alpha_t y_t, \quad x_{t+1} = s^{M, \bar{\delta}_t, \tau}(v_{t})\]
                \vspace{-0.3cm}
        \STATE Compute 
            \begin{equation}
            \label{eq:subproblem_2ord}
            \begin{aligned}
                y_{t+1}=\arg \min _{x \in \mathbb{R}^{n}}&\left\{\psi_{t+1}(x):= \psi_{t}(x)+ \tfrac{\lambda_{t+1} - \lambda_{t}}{2}\|x - x_0\|^2 
                \right.\\
                &\left.
                + \textstyle{\sum \limits_{i = 2}^{3} }
                \tfrac{\bk^{t+1}_i - \bk^{t}_i}{i}\|x - x_0\|^i
                +\tfrac{\alpha_{t}}{A_{t}} 
                l(x,x_{t+1}) \right\}.
                \end{aligned}
            \end{equation}
    \ENDFOR
  \end{algorithmic}
\end{algorithm}

\begin{theorem}\label{thm:acc_convergence}
    Let Assumption \ref{as:lip} hold and $M \geq 2L_2$. 
    \begin{itemize}[leftmargin=10pt,nolistsep]
        \item 
            Let Assumption \ref{as:2ord_stoch} hold. After $T \geq 1$ with parameters
            \begin{gather} \label{eq:params}
                \bk_2^{t+1} =  \tfrac{2\bar{\delta}_t\al_t^2}{A_t}, ~  \bk_{3}^{t+1} = \tfrac{8M}{3}\tfrac{\alpha_{t+1}^{3}}{A_{t+1}}, ~
                \lambda_t = \tfrac{\sigma_1}{R}(t+3)^{\frac{5}{2}},~
                \bar{\delta}_t = 2\sigma_2 + \tfrac{\sigma_1 + \aa{\tau}}{R}(t+3)^{\frac{3}{2}},
            \end{gather}
            we get the following bound 
            \begin{equation}
            \label{eq:acc_convergence_stoch}
            \begin{aligned}
               \E \left[ f(x_{T}) - f(x^{\ast}) \right]  
               &\leq 
               O \ls \tfrac{\tau R}{\sqrt{T}} + \tfrac{\sigma_1 R}{\sqrt{T}} + \tfrac{\sigma_2 R^{2}}{T^{2}}  
                + \tfrac{MR^{3}}{T^{3}}\rs.
            \end{aligned}
            \end{equation}
        \item 
            Let Assumption \ref{as:2ord_stoch_grad_inexact_hess} hold. After $T \geq 1$ with parameters defined in  \eqref{eq:params} and \\
            $\sigma_2 = \delta_2 = \max \limits_{t=1, \ldots, T} \delta_t^{v_{t-1}, x_{t}}$,
            we get the following bound 
            \begin{equation}
            \label{eq:acc_convergence_stoch_grad_inexact_hess}
            \begin{aligned}
               \E  [ f(x_{T}) - f(x^{\ast})   ]  
               &\leq 
               O \ls \tfrac{\tau R}{\sqrt{T}} + \tfrac{\sigma_1 R}{\sqrt{T}} + \tfrac{\delta_2 R^{2}}{T^{2}}  
                + \tfrac{M R^{3}}{T^{3}}\rs.
            \end{aligned}
            \end{equation}
    \end{itemize}    
\end{theorem}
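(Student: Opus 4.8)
The plan is to run a stochastic estimating-functions argument, sandwiching $\E[f(x_T)]$ between the value and the minimum of the aggregated lower model $\psi_T$, and to reverse-engineer the error bookkeeping so that the regularizers $\lambda_t,\bk_2^t,\bk_3^t,\bar\delta_t$ from \eqref{eq:params} are simultaneously large enough to absorb every per-step error and small enough that the leading term reproduces the claimed rate. First I would unfold $\psi_t$ and record the telescoping identity $\tfrac{1}{A_t}-\tfrac{1}{A_{t-1}}=\tfrac{\alpha_t}{A_t}$ (with $\tfrac{\alpha_0}{A_0}=1$), which shows the total weight of the stochastic minorants $l(\cdot,x_{j+1})$ inside $\psi_{t+1}$ equals $\tfrac{1}{A_t}$; hence the quantity to track is $A_t\psi_{t+1}$, living on the scale of $f$. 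Writing $\psi_{t+1}^\ast:=\min_x\psi_{t+1}(x)=\psi_{t+1}(y_{t+1})$, the target is the invariant
\[
\E[f(x_{t+1})]\;\le\;A_t\,\E[\psi_{t+1}^\ast]+\E[\mathcal E_{t+1}],
\]
proved by induction on $t$ (base case $t=0$ is trivial, $\mathcal E_0=0$). The matching upper estimate comes from $\psi_{t+1}^\ast\le\psi_{t+1}(x^\ast)$: since each $l(x^\ast,x_{j+1})$ is an \emph{unbiased} affine minorant of $f$ — by Assumption~\ref{as:1ord_stoch}, $\E[l(x^\ast,x_{j+1})\mid x_{j+1}]=f(x_{j+1})+\la\nabla f(x_{j+1}),x^\ast-x_{j+1}\ra\le f^\ast$ — all stochastic cross-terms vanish in expectation and
\[
A_t\,\E[\psi_{t+1}(x^\ast)]\;\le\;f^\ast+A_t\Big(\tfrac{\bk_2^{t+1}+\lambda_{t+1}}{2}R^2+\tfrac{\bk_3^{t+1}}{3}R^3\Big).
\]
Chaining the two gives $\E[f(x_{t+1})]-f^\ast\le A_t(\tfrac{\bk_2^{t+1}+\lambda_{t+1}}{2}R^2+\tfrac{\bk_3^{t+1}}{3}R^3)+\E[\mathcal E_{t+1}]$, and substituting \eqref{eq:params} makes this leading term exactly $O(\tfrac{\sigma_1R}{\sqrt T}+\tfrac{\sigma_2R^2}{T^2}+\tfrac{MR^3}{T^3})$, the $\tfrac{\tau R}{\sqrt T}$ contribution entering through the $\tfrac{\sigma_1+\tau}{R}(t+3)^{3/2}$ part of $\bar\delta_t$ inside $\bk_2^{t+1}$.

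The inductive step is the heart. Since $\psi_t$ is $(\bk_2^t+\lambda_t)$-strongly convex with minimizer $y_t$, I would start from $\psi_t(y_{t+1})\ge\psi_t^\ast+\tfrac{\bk_2^t+\lambda_t}{2}\|y_{t+1}-y_t\|^2$, drop the nonnegative regularizer increments, and expand the new linear term splitting $g(x_{t+1},\xi)=\nabla f(x_{t+1})+\zeta_{t+1}$. Unbiasedness kills $\E[\la\zeta_{t+1},y_t-x_{t+1}\ra]$ (a fresh sample, independent of $y_t,x_{t+1}$), while Young's inequality charges $\la\zeta_{t+1},y_{t+1}-y_t\ra$ partly to the strong-convexity term and partly to a residual whose conditional expectation is $\le\sigma_1^2\tfrac{\alpha_t^2}{\lambda_tA_t^2}$. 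After multiplying by $A_t$ and invoking the hypothesis $A_t\psi_t^\ast\ge(1-\alpha_t)(f(x_t)-\mathcal E_t)$, it remains to show the deterministic part dominates $f(x_{t+1})$. Convexity of $f$ and the coupling $v_t=(1-\alpha_t)x_t+\alpha_ty_t$ collapse the deterministic terms to $\la\nabla f(x_{t+1}),v_t-x_{t+1}\ra$ plus an $\alpha_t\la\nabla f(x_{t+1}),y_{t+1}-y_t\ra$ correction (handled by Young against strong convexity). For the decisive inner product I would substitute the inexact stationarity of the subproblem: since $\|\nabla\omega^{M,\bar\delta_t}_{v_t}(x_{t+1})\|\le\tau$ and, by the gradient bound in Lemma~\ref{lm:2ord_bound}, $\nabla f(x_{t+1})$ differs from $\nabla\phi_{v_t}(x_{t+1})$ by at most $\|g(v_t)-\nabla f(v_t)\|+\|(H(v_t)-\nabla^2f(v_t))(x_{t+1}-v_t)\|+\tfrac{L_2}{3}\|x_{t+1}-v_t\|^2$, one obtains with $d_t:=x_{t+1}-v_t$
\[
\la\nabla f(x_{t+1}),v_t-x_{t+1}\ra\ge\bar\delta_t\|d_t\|^2+\tfrac{M}{2}\|d_t\|^3-\tau\|d_t\|-\|g(v_t)-\nabla f(v_t)\|\|d_t\|-\|(H(v_t)-\nabla^2f(v_t))d_t\|\|d_t\|-\tfrac{L_2}{3}\|d_t\|^3.
\]
The cubic term dominates $\tfrac{L_2}{3}\|d_t\|^3$ since $M\ge2L_2$, and Young's inequality splits each remaining product into a piece $\le\bar\delta_t\|d_t\|^2$ (absorbed, using $\bar\delta_t\ge2\sigma_2+\tfrac{\sigma_1+\tau}{R}(t+3)^{3/2}$) and a residual in $\tau^2$, $\|g(v_t)-\nabla f(v_t)\|^2$, and $\|(H(v_t)-\nabla^2f(v_t))d_t\|^2$. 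This produces the recursion $\E[\mathcal E_{t+1}]\le(1-\alpha_t)\E[\mathcal E_t]+e_t$, with $e_t$ collecting the gradient-noise, subproblem-tolerance, and Hessian-error residuals.

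Finally I would solve the recursion as $\mathcal E_T=A_{T-1}\sum_{t<T}e_t/A_t$ and check, using $A_t=\tfrac{6}{(t+1)(t+2)(t+3)}$ and \eqref{eq:params}, that each accumulated residual matches the leading rate; e.g. the gradient-noise sum $A_{T-1}\sum_t\tfrac{\alpha_t^2}{\lambda_tA_t^2}\sigma_1^2=\Theta(\tfrac{\sigma_1R}{\sqrt T})$ precisely because $\lambda_t=\tfrac{\sigma_1}{R}(t+3)^{5/2}$, and the $\tau$-residual gives $\Theta(\tfrac{\tau R}{\sqrt T})$. The two bullets diverge only in the Hessian residual. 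Under Assumption~\ref{as:2ord_stoch_grad_inexact_hess} the bound $\|(H(v_t)-\nabla^2f(v_t))d_t\|\le\delta_2^{v_t,x_{t+1}}\|d_t\|$ is deterministic and, with $\delta_2=\max_t\delta_t^{v_{t-1},x_t}\le\bar\delta_t$, the residual is clean. Under Assumption~\ref{as:2ord_stoch} I expect the \emph{main obstacle}: $x_{t+1}$ (hence $d_t$) is correlated with the Hessian sample $H(v_t,\xi)$, so one cannot invoke $\E\|(H(v_t)-\nabla^2f(v_t))d_t\|^2\le\sigma_2^2\E\|d_t\|^2$ by independence. I would resolve this by keeping the $\|d_t\|^2$-coefficient deterministic in the Young split — $\tfrac12 ab\le\tfrac{a^2}{4\bar\delta_t}+\tfrac{\bar\delta_t}{4}b^2$ with $a=\|(H(v_t)-\nabla^2f(v_t))d_t\|$, $b=\|d_t\|$ — absorbing $\tfrac{\bar\delta_t}{4}\|d_t\|^2$ into the model's quadratic regularizer and controlling the leftover $\tfrac{1}{4\bar\delta_t}\|(H(v_t)-\nabla^2f(v_t))d_t\|^2$ via $\bar\delta_t\ge2\sigma_2$ together with the a priori bound on $\|d_t\|$ supplied by the cubic term $\tfrac{M}{6}\|d_t\|^3$ in $\omega^{M,\bar\delta_t}_{v_t}$, so that the Hessian variance enters the accumulated error only through a sum of order $\Theta(\tfrac{\sigma_2R^2}{T^2})$.
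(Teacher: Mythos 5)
Your overall architecture coincides with the paper's: the same estimating-sequence sandwich $\tfrac{f(x_t)}{A_{t-1}}-\mathcal{E}_t\le\psi_t^{\ast}\le\psi_t(x^{\ast})$, the same telescoping of $\tfrac{\alpha_j}{A_j}$, the same use of unbiasedness to kill the cross terms $\langle g-\nabla f, x^{\ast}-x_{j+1}\rangle$ and $\langle g-\nabla f, y_j-x_{j+1}\rangle$, and the same error bookkeeping in which $\lambda_t$ absorbs the gradient noise at rate $\sigma_1 R/\sqrt{T}$ and $\bar\delta_t$ absorbs the Hessian and subproblem errors. Your treatment of the Hessian/iterate correlation under Assumption \ref{as:2ord_stoch} is, if anything, more explicit than the paper's one-line remark.

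There is, however, one concrete step that fails as written: your lower bound on the decisive inner product. Pairing the inexact stationarity condition directly with $d_t=x_{t+1}-v_t$ gives, as you state,
\begin{equation*}
\langle \nabla f(x_{t+1}), v_t-x_{t+1}\rangle \ge \bar\delta_t\|d_t\|^2+\tfrac{M}{2}\|d_t\|^3-(\text{error terms})\cdot\|d_t\|-\tfrac{L_2}{3}\|d_t\|^3,
\end{equation*}
which after your Young splits is merely \emph{nonnegative minus residuals}. That is not enough to close the induction: the term $\tfrac{\alpha_t}{A_t}\langle\nabla f(x_{t+1}),y_{t+1}-y_t\rangle$, handled by Young/Fenchel against the regularizers $\tfrac{\bk_2^t}{2}\|y_{t+1}-y_t\|^2$ and $\tfrac{\bk_3^t}{6}\|y_{t+1}-y_t\|^3$, leaves behind \emph{negative} residuals of order $\tfrac{\alpha_t^2}{A_t^2\bk_2^t}\|\nabla f(x_{t+1})\|^2$ and $\bigl(\tfrac{\alpha_t^3}{A_t^3\bk_3^t}\|\nabla f(x_{t+1})\|^3\bigr)^{1/2}$, and these must be cancelled by a \emph{positive} lower bound on $\langle\nabla f(x_{t+1}),v_t-x_{t+1}\rangle$ expressed in terms of $\|\nabla f(x_{t+1})\|$, not $\|d_t\|$. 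The paper's Lemma \ref{lm:scalar_lb_cases} achieves exactly this by \emph{squaring} the stationarity condition, $\|\nabla f(x_{t+1})+\zeta_{t+1}d_t\|^2\le 2\|\nabla\phi_{v_t}(x_{t+1})-\nabla f(x_{t+1})\|^2+2\tau^2$, whose expansion retains the term $\tfrac{1}{2\zeta_{t+1}}\|\nabla f(x_{t+1})\|^2$; a two-case analysis on whether $\bar\delta_t$ or $\tfrac{M}{2}\|d_t\|$ dominates $\zeta_{t+1}$ then yields the bound $\min\bigl\{\tfrac{1}{4\bar\delta_t}\|\nabla f(x_{t+1})\|^2,\ (\tfrac{1}{3M})^{1/2}\|\nabla f(x_{t+1})\|^{3/2}\bigr\}$, and the parameter choices $\bk_2^{t+1}=\tfrac{2\bar\delta_t\alpha_t^2}{A_t}$, $\bk_3^{t+1}=\tfrac{8M}{3}\tfrac{\alpha_{t+1}^3}{A_{t+1}}$ in \eqref{eq:params} are calibrated precisely so that each case cancels the corresponding Fenchel residual. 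Without converting your $\|d_t\|$-form progress bound into this $\|\nabla f(x_{t+1})\|$-form (e.g.\ by also deriving $\|\nabla f(x_{t+1})\|\lesssim\zeta_{t+1}\|d_t\|+\text{errors}$ from stationarity), the claimed domination of $f(x_{t+1})$ by the deterministic part cannot be established, and the specific constants in \eqref{eq:params} play no role in your argument.
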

\vspace{-0.3cm}

This result provides an upper bound for the objective residual after $T$ iterations of Algorithm \ref{alg:inexact_acc_detailed}. The last term in the RHS of \eqref{eq:acc_convergence_stoch} and \eqref{eq:acc_convergence_stoch_grad_inexact_hess} corresponds to the case of exact Accelerated Cubic Newton method \citep{nesterov2008accelerating}. The remaining terms reveal how the convergence rate is affected by the imprecise calculation of each derivative and by inexact solution of subproblem.
We provide sufficient conditions for the inexactness in the derivatives to ensure that the method can still obtain an objective residual smaller than $\e$.
Specifically, this result addresses the following question: given that the errors are controllable and can be made arbitrarily small, how small should each derivative's error be to achieve an $\e$-solution?

\begin{corollary}\label{cor:2ord_inexactenss_acc}
    Let assumptions of Theorem \ref{thm:acc_convergence} hold and let $\e > 0$ be the desired solution accuracy.
    \begin{itemize}[leftmargin=10pt,nolistsep]
        \item Let the levels of inexactness in Assumption \ref{as:2ord_stoch} be:
        \begin{gather*}
            \tau = O\ls\e^{\frac{5}{6}}\ls \tfrac{M}{R^3} \rs^{\frac{1}{6}} \rs, 
            \quad 
            \sigma_1 = O\ls\e^{\frac{5}{6}}\ls \tfrac{M}{R^3} \rs^{\frac{1}{6}} \rs,
            \quad 
            \sigma_2 = O\ls\e^{\frac{1}{3}}M^\frac{2}{3}\rs
        \end{gather*}
        \item Let the levels of inexactness in Assumption \ref{as:2ord_stoch_grad_inexact_hess} be:
        \begin{gather*}
            \tau = O\ls\e^{\frac{5}{6}}\ls \tfrac{M}{R^3} \rs^{\frac{1}{6}} \rs, 
            \quad 
            \sigma_1 = O\ls\e^{\frac{5}{6}}\ls \tfrac{M}{R^3} \rs^{\frac{1}{6}} \rs,
            \quad 
            \delta_2 = O\ls\e^{\frac{1}{3}}M^\frac{2}{3}\rs
        \end{gather*}
    \end{itemize}
    \vspace{-0.3cm}
    And let the number of iterations of Algorithm \ref{alg:inexact_acc_detailed} satisfy $T = O \ls \frac{MR^3}{\e} \rs^{\frac{1}{3}}$. 
    Then $x_{T}$ is an $\e$-solution of problem \eqref{eq:main_problem}, i.e. $f(x_{T})-f(x^{\ast})\leq \e$. 
\end{corollary}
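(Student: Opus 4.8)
The plan is to treat the corollary as a direct consequence of the two convergence bounds \eqref{eq:acc_convergence_stoch} and \eqref{eq:acc_convergence_stoch_grad_inexact_hess} established in Theorem~\ref{thm:acc_convergence}: substitute the prescribed inexactness levels together with the prescribed iteration count $T = O\ls (MR^3/\e)^{1/3} \rs$, and verify by exponent bookkeeping that each of the four additive terms on the right-hand side is of order $\e$. Since the two bounds have identical structure (with $\sigma_2$ replaced by $\delta_2$), a single calculation covers both cases simultaneously.

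First I would handle the exact-convergence term. With $T = O\ls (MR^3/\e)^{1/3} \rs$ we have $T^3 = O(MR^3/\e)$, hence $\tfrac{MR^3}{T^3} = O(\e)$. This also confirms that $T \geq 1$ for all sufficiently small $\e$, so the hypothesis $T \geq 1$ of Theorem~\ref{thm:acc_convergence} is satisfied. Next I would treat the two $1/\sqrt{T}$ terms. Writing $\sqrt{T} = O\ls (MR^3/\e)^{1/6} \rs$ and substituting $\tau = O\ls \e^{5/6}(M/R^3)^{1/6} \rs$, the factors of $M$ and $R$ cancel and the powers of $\e$ combine as $\e^{5/6}\cdot\e^{1/6}=\e$, so $\tfrac{\tau R}{\sqrt{T}} = O(\e)$; the identical prescription for $\sigma_1$ gives $\tfrac{\sigma_1 R}{\sqrt{T}} = O(\e)$ by the same computation. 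Finally, for the Hessian term I would use $T^2 = O\ls (MR^3/\e)^{2/3} \rs$ and $\sigma_2 = O\ls \e^{1/3}M^{2/3} \rs$; again the $M$ and $R$ factors cancel and the $\e$ powers combine as $\e^{1/3}\cdot\e^{2/3}=\e$, giving $\tfrac{\sigma_2 R^2}{T^2} = O(\e)$, and identically $\tfrac{\delta_2 R^2}{T^2}=O(\e)$ in the inexact-Hessian case. Summing the four terms yields $\E[f(x_T)-f(x^\ast)] = O(\e)$.

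The only point requiring genuine care — rather than a substantive obstacle — is converting the $O(\e)$ into the clean threshold $\e$: the constants hidden in the prescribed $\tau,\sigma_1,\sigma_2,\delta_2$ and in the choice of $T$ must be fixed so that each of the four terms is at most $\e/4$. This is possible precisely because the inexactness levels and $T$ are all specified as explicit functions of the external quantities $\e$, $M$, $R$ (with $M \geq 2L_2$ fixed), and no term's bound feeds back into the choice of another, so there is no circular dependence to untangle.

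A minor caveat worth stating is that, since the gradient oracle remains stochastic under both Assumption~\ref{as:2ord_stoch} and Assumption~\ref{as:2ord_stoch_grad_inexact_hess}, the conclusion is properly the in-expectation statement $\E[f(x_T)-f(x^\ast)] \le \e$, matching the expectation on the left-hand side of the bounds in Theorem~\ref{thm:acc_convergence}.
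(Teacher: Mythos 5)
Your proposal is correct and matches the intended argument: the paper states this corollary without an explicit proof precisely because it follows by direct substitution of the prescribed $\tau,\sigma_1,\sigma_2$ (resp.\ $\delta_2$) and $T$ into the bounds \eqref{eq:acc_convergence_stoch} and \eqref{eq:acc_convergence_stoch_grad_inexact_hess}, which is exactly the exponent bookkeeping you carry out. Your two side remarks — that the hidden constants must be fixed so each term is at most $\e/4$, and that the guarantee is properly an in-expectation statement — are both accurate refinements of the paper's phrasing rather than deviations from it.
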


In practice, achieving an excessively accurate solution for the subproblem on the initial iterations is not essential. Instead, a dynamic strategy can be employed to determine the level of accuracy required for the subproblem. Specifically, we can choose a dynamic precision level according to \aaa{$\tau_t = \frac{c}{t^{5/2}}$}, where $c>0$. As a result, the convergence rate term associated with the inexactness of the subproblem becomes $O \ls \frac{c}{T^{3}} \rs$, which matches the convergence rate of the Accelerated Cubic Newton method.
 \vspace{-13pt}
\section{\aaa{Theoretical complexity lower bound}}
 \vspace{-8pt}
\label{sec:lb}
In this section, we present a novel \aaa{theoretical complexity} lower bound for inexact second-order methods with stochastic gradient and inexact (stochastic) Hessian. The proof technique draws inspiration from the works \citep{devolder2014first,nesterov2021implementable,nesterov2018lectures}. For this section, we assume that the function $f(x)$ is convex and has $L_1$-Lipschitz-continuous gradient and $L_2$-Lipschitz-continuous Hessian.
\\
To begin, we describe the information and structure of stochastic second-order methods. At each point $x_t$, the oracle provides us with an unbiased stochastic gradient $g_t=g(x_t,\xi)$ and an inexact (stochastic) Hessian $H_t = H(x_t,\xi)$. The method can compute the minimum of the following models:
\begin{equation*}
  h_{t+1} = \textstyle{\argmin}_h \lb\phi_{x_t}(h) = a_1 \la g_t, h\ra + a_2 \la H_t h, h\ra + b_1 \|h\|^2 + b_2 \|h\|^3\rb.
\end{equation*}
Now, we formulate the main assumption regarding the method's ability to generate new points.
\begin{assumption}
\label{as:lower_bound_sequence}
    The method generates a recursive sequence of test points ${x_t}$ that satisfies the following condition
    \begin{equation*}
        x_{t+1} \in x_0 + \Span \lb h_1, \ldots, h_{t+1 }\rb
    \end{equation*}
\end{assumption}
Most first-order and second-order methods, including accelerated versions, typically satisfy this assumption. However, we highlight that randomized methods are not covered by this lower bound. Randomized lower bound even for exact high-order methods is still an open problem. More details on randomized lower bounds for first-order methods are presented in \citep{woodworth2017lower, nemirovski1983problem}. Finally, we present the main \aaa{theoretical complexity} lower bound theorem for stochastic second-order methods.
\begin{theorem}\label{thm:lower_bound}
            Let some second-order method $\mathcal{M}$ with exactly solved subproblem satisfy Assumption \ref{as:lower_bound_sequence} and have access only to unbiased stochastic gradient and inexact Hessian satisfying Assumption \ref{as:2ord_stoch} or Assumption \ref{as:2ord_stoch_grad_inexact_hess} with $\sigma_2 = \delta_2=\max \limits_{t=1, \ldots, T} \delta_t^{x_{t-1}, x_{t}}$. Assume the method $\mathcal{M}$ ensures for any function $f$ with $L_1$-Lipschitz-continuous gradient and $L_2$-Lipschitz-continuous Hessian the following convergence rate 
            \begin{equation}
            \label{eq:lower_bound_convergence_stoh}
               \min\limits_{0\leq t \leq T} \E \left[  f(x_{t}) - f(x^{\ast}) \right]  
               \leq O(1) \max \lb \tfrac{\sigma_1 R}{\Xi_1(T)}; \tfrac{\sigma_2 R^{2}}{\Xi_2(T)};  
                \tfrac{L_2R^{3}}{\Xi_3(T)}\rb.
            \end{equation}
            Then for all $T\geq 1$ we haven
            \begin{equation}
            \label{eq:lower_bound_convergence_powers}
                \Xi_1 (T) \leq \sqrt{T}, \qquad \Xi_2 (T) \leq T^2, \qquad \Xi_3 (T) \leq T^{7/2}.
            \end{equation}
\end{theorem}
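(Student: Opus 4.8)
The plan is to establish the three inequalities in~\eqref{eq:lower_bound_convergence_powers} separately. For each one I would exhibit a single hard instance on which two of the three error sources inside the maximum of~\eqref{eq:lower_bound_convergence_stoh} are switched off, so that the assumed guarantee collapses to a single surviving term, and then invoke a matching classical lower bound on that term. Concretely, to bound $\Xi_1$ I would drive $\sigma_2$ and $L_2$ to zero; to bound $\Xi_2$ I would take $\sigma_1 = 0$ on a quadratic (so $L_2 = 0$ automatically); and to bound $\Xi_3$ I would use an exact oracle, $\sigma_1 = \sigma_2 = 0$. In all three cases the span restriction of Assumption~\ref{as:lower_bound_sequence} is what prevents the method from progressing faster than the claimed rate.

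\textbf{Gradient term.} Fix a simple quadratic $f$ whose Hessian is constant and therefore carries no information about the location of $x^{\ast}$: the minimizer can only be identified through the noisy gradients of variance $\sigma_1^2$. Returning the exact (constant) Hessian is admissible under both Assumption~\ref{as:2ord_stoch} and Assumption~\ref{as:2ord_stoch_grad_inexact_hess} with $\sigma_2 = \delta_2 = 0$, and taking the curvature small makes the $L_2$-term negligible. What remains is a pure statistical estimation problem: two quadratics with oppositely shifted minimizers produce identically distributed gradient samples up to a shift of order $\sigma_1/\sqrt{T}$, so the classical two-point / information-theoretic argument of~\cite{nemirovski1983problem} yields $\min_{t} \E[f(x_{t}) - f^{\ast}] \geq \Omega(\sigma_1 R / \sqrt{T})$. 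Comparing with the hypothesized $O(\sigma_1 R / \Xi_1(T))$ forces $\Xi_1(T) \leq \sqrt{T}$.

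\textbf{Hessian term (the crux).} I would take the classical worst-case quadratic of Nesterov, with tridiagonal (discrete-Laplacian) Hessian, scaled so that its operator norm equals $\sigma_2$ (resp.\ $\delta_2$). Since $\sigma_1 = 0$ I feed the method exact gradients, and crucially I return the \emph{trivial} Hessian $H_t = 0$. This stays within budget: $\|(0 - \nabla^2 f)[y-x]\| = \|\nabla^2 f (y-x)\| \leq \sigma_2 \|y - x\|$ and $\E\|0 - \nabla^2 f\|_2^2 = \|\nabla^2 f\|_2^2 \leq \sigma_2^2$, so both Assumption~\ref{as:2ord_stoch} and Assumption~\ref{as:2ord_stoch_grad_inexact_hess} hold. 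With $H_t = 0$ the minimized model $a_1\la g_t, h\ra + b_1\|h\|^2 + b_2\|h\|^3$ has its minimizer along $-g_t$, so $h_{t+1} \in \Span\{g_t\}$ and Assumption~\ref{as:lower_bound_sequence} degrades to the first-order span $x_{t+1} \in x_0 + \Span\{\nabla f(x_0), \ldots, \nabla f(x_t)\}$. Because the function is quadratic with $L_2 = 0$, the third term of~\eqref{eq:lower_bound_convergence_stoh} vanishes as well, and the classical first-order lower bound $\Omega(L_1 R^2 / T^2)$ applies with effective smoothness $L_1 = \sigma_2$. This gives $\Xi_2(T) \leq T^2$.

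\textbf{Lipschitz term and the main obstacle.} Setting $\sigma_1 = \sigma_2 = 0$ reduces the guarantee to $O(L_2 R^3 / \Xi_3(T))$, and I would invoke the known lower bound $\Omega(L_2 R^3 / T^{7/2})$ for exact second-order methods under the span assumption~\citep{pmlr-v75-agarwal18a, arjevani2019oracle}, obtained from the standard smooth polynomial hard instance. I expect the Hessian term to be the main obstacle: one must verify simultaneously that the trivial Hessian stays inside the inexactness budget in both the directional ($\delta_2$) and second-moment ($\sigma_2$) senses, and that neutralizing the curvature genuinely collapses the update to the first-order span — in particular that the method cannot exploit the known constant curvature or the auxiliary $\|h\|^2,\|h\|^3$ regularizers to accelerate past $T^{-2}$. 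Once these facts are checked, the three bounds together yield~\eqref{eq:lower_bound_convergence_powers}.
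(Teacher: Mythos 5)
Your proposal is correct and follows essentially the same route as the paper: feed the method an uninformative Hessian that stays within the $\sigma_2$/$\delta_2$ budget so that it degenerates to a first-order method with effective smoothness $\Theta(\sigma_2)$, invoke the classical first-order lower bounds of \cite{nemirovski1983problem} for the $\Xi_1$ and $\Xi_2$ terms, and invoke the exact second-order lower bound with $\sigma_1=\sigma_2=0$ for $\Xi_3$. The only cosmetic differences are that the paper handles $\Xi_1$ and $\Xi_2$ with a single hard instance using the constant oracle $H(x,\xi)=2L_1 I$ (rather than your two separate instances with exact Hessian and with $H=0$) and phrases the whole argument as a proof by contradiction.
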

\begin{proof}
\vspace{-5pt}
We prove this Theorem from contradiction.
Let assume that there exist the method $\mathcal{M}$ that satisfies conditions of the Theorem \ref{thm:lower_bound} and it is faster in one of the bounds from \eqref{eq:lower_bound_convergence_powers}.

The first case, $\Xi_1 (T) > \sqrt{T}$ or  $\Xi_2 (T) > T^2$. Let us apply this method for the first-order lower bound function. It is well-known, that for the first-order methods, the lower bound is $\Omega\ls \frac{\sigma_1 R}{\sqrt{T}}+ \frac{L_1 R^{2}}{T^2}\rs$ \citep{nemirovski1983problem}. Also, the first-order lower bound function has $0$-Lipschitz-continuous Hessian. It means, that the method $\mathcal{M}$ can be applied for the first-order lower-bound function. We fix stochastic Hessian oracle as $H(x,\xi) = 2L_1 I$. It means that $\sigma_2= 2L$ for such inexact Hessian. With such matrix $H(x,\xi)= 2L_1 I$, the method $\mathcal{M}$ has only the first-order information and lies in the class of first-order methods. Hence, we apply the method $\mathcal{M}$ to the first-order lower bound function and get the rate $\min\limits_{0\leq t \leq T}\E \left[  f(x_{t}) - f(x^{\ast}) \right]  \leq O(1) \max \lb \frac{\sigma_1 R}{\Xi_1(T)}; \frac{\sigma_2 R^{2}}{\Xi_2(T)}\rb$, where $\Xi_1 (T) > \sqrt{T}$ or  $\Xi_2 (T) > T^2$. It means that we've got a faster method than a lower bound. It is a contradiction, hence the rates for the method $\mathcal{M}$ are bounded as $\Xi_1 (T) \leq \sqrt{T},  \Xi_2 (T) \leq T^2$.
The second case, $\Xi_3 (T) > T^{7/2}$. It is well-known, that the deterministic second-order lower bound is $\Omega\ls\frac{L_2 R^{3}}{T^{7/2}}\rs$. Let us apply the method $\mathcal{M}$ for the second-order lower bound function, where the oracle give us exact gradients and exact Hessians, then $\sigma_1=0$, $\sigma_2=0$ and the method $\mathcal{M}$ is in class of exact second-order methods but converges faster than the lower bound. It is a contradiction, hence the rate for the method $\mathcal{M}$ is bounded as $\Xi_3 (T) \leq T^{7/2}$.
\end{proof}
 \vspace{-13pt}
\section{Tensor generalization}
 \vspace{-8pt}
\label{sec:tensor}
In this section we propose a tensor generalization of Algorithm \ref{alg:inexact_acc_detailed}. We start with introducing the standard assumption on the objective $f$ for tensor methods.
\begin{assumption}\label{as:lip_p}
    Function $f$ is convex,  $p$ times differentiable on $\mathbb{R}^d$, and its $p$-th derivative is  Lipschitz continuous, i.e. for all $
    x, y \in \mathbb{R}^d$
    $$\|\nabla^p f(x) - \nabla^p f(y)\| \leq L_\aaa{p} \|x - y\|.$$ 
\end{assumption}

We denote the $i$-th directional derivative of function $f$ at $x$ along directions $s_1, \ldots, s_i \in \mathbb R^n$ as  
$\nabla^i f(x)[s_1, \ldots, s_i].$
If all directions are the same we write $\nabla^i f(x)[s]^i.$
For a $p$-th order tensor $U$, we denote by $\|U\|$ its tensor norm recursively induced \citep{cartis2017improved} by the Euclidean norm on the space of $p$-th order tensors: 
$$\|U\| = \max \limits_{\|s_1\| =  \ldots = \| s_p\| = 1} \{|U[s_1, \ldots, s_p]| \},$$
where  $\|\cdot\|$ is the standard Euclidean norm.
\\
We construct tensor methods based on the $p$-th order Taylor approximation of the function $f(x)$, which can be written as follows:
\begin{equation*}\label{eq:taylor}
    \Phi_{x,p}(y) \stackrel{\text { def }}{=} f(x)+\textstyle{\sum}  _{i=1}^{p} \tfrac{1}{i !} \nabla^{i} f(x)[y-x]^{i}, \quad y \in \mathbb{R}^d.
\end{equation*}
Using approximations $G_i(x)$ for the derivatives $\nabla^i f\left(x\right)$ we create an inexact $p$-th order Taylor series expansion of the objective 
\begin{equation*}\label{eq:approx_taylor}
    \phi_{x,p}(y)=f\left(x\right)+ \textstyle{\sum} _{i = 1}^p   \tfrac{1}{i!}G_{i}(x)[y-x]^i.
\end{equation*}
Next, we introduce a counterpart of Lemma \ref{lm:2ord_bound} for high-order methods.
\begin{lemma}[\aaa{{\cite[Lemma 2]{agafonov2023inexact}}}]
    \label{lm:p_ord_bound}
     Let Assumption \ref{as:lip_p} hold. Then, for any $x,y \in \mathbb{R}^d$, we have
    \begin{equation*}
    \begin{split}
         |f( y) - \phi_{{x},p}(y)|  
         \leq \textstyle{\sum \limits_{i = 1}^p} \frac{1}{i!}\|(G_i(x) - \nabla^i f({x}))[y - x]^{i-1}\|\|y-x\| + \frac{L_p}{(p+1)!}\|y - x\|^{p+1},
     \end{split}
\end{equation*}
    \vspace{-10pt}
\begin{equation*}
    \begin{split}
         \|\nabla f( y) - \nabla \phi_{x,p}(y)\| 
         \leq  \textstyle{\sum \limits_{i = 1}^{p}} \frac{1}{(i-1)!}\|(G_i(x) - \nabla^i f({x}))[y - x]^{i-1}\| + \frac{L_p}{p!}\|y - x\|^{p},
    \end{split}
\end{equation*}
where we use the standard convention $0!=1$.
\end{lemma}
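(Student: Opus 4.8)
The plan is to decompose the total error into two independent sources: the error incurred by replacing the exact derivatives $\nabla^i f(x)$ with their approximations $G_i(x)$, and the error of the exact Taylor truncation itself. To this end, I would introduce the \emph{exact} $p$-th order Taylor polynomial $\Phi_{x,p}(y) = f(x) + \sum_{i=1}^p \frac{1}{i!}\nabla^i f(x)[y-x]^i$ as an intermediate object and apply the triangle inequality to both quantities:
\[
|f(y) - \phi_{x,p}(y)| \le |f(y) - \Phi_{x,p}(y)| + |\Phi_{x,p}(y) - \phi_{x,p}(y)|,
\]
and analogously for the gradients after differentiating term by term in $y$.

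For the approximation-error term I would exploit multilinearity, writing $\Phi_{x,p}(y) - \phi_{x,p}(y) = \sum_{i=1}^p \frac{1}{i!}(\nabla^i f(x) - G_i(x))[y-x]^i$. Each summand is the full contraction of the $(i-1)$-fold contraction $(\nabla^i f(x) - G_i(x))[y-x]^{i-1}$ (a vector) against $y-x$, so Cauchy–Schwarz bounds it by $\|(\nabla^i f(x) - G_i(x))[y-x]^{i-1}\|\,\|y-x\|$; summing over $i$ produces exactly the first sum in the function-value bound. For the gradient, differentiating $\Phi_{x,p} - \phi_{x,p}$ in $y$ pulls down the factor $i$ and lowers the power by one, yielding $\sum_{i=1}^p \frac{1}{(i-1)!}(\nabla^i f(x) - G_i(x))[y-x]^{i-1}$, to which the triangle inequality on the induced tensor norm directly gives the first sum in the gradient bound.

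For the truncation-error term I would invoke the standard consequence of Assumption~\ref{as:lip_p}: the integral form of the Taylor remainder combined with $L_p$-Lipschitz continuity of $\nabla^p f$ yields $|f(y) - \Phi_{x,p}(y)| \le \frac{L_p}{(p+1)!}\|y-x\|^{p+1}$ and $\|\nabla f(y) - \nabla\Phi_{x,p}(y)\| \le \frac{L_p}{p!}\|y-x\|^p$. These follow by writing the remainder in terms of $\nabla^p f(x + t(y-x)) - \nabla^p f(x)$, bounding its norm by $L_p\, t\,\|y-x\|$, and integrating $p$ (respectively $p-1$) times over $t \in [0,1]$. Adding the two contributions term by term then closes both inequalities, with the convention $0!=1$ handling the $i=1$ term.

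\emph{Main obstacle:} the principal care lies in the tensor-norm bookkeeping — verifying that contracting a symmetric $i$-th order tensor against copies of the fixed vector $y-x$ is compatible with the recursively induced Euclidean norm, so that the Cauchy–Schwarz and triangle steps are legitimate in the stated norms. Once this is in place, the truncation estimates are the classical Nesterov-type bounds and the remaining algebra is routine.
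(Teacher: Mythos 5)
Your proposal is correct and follows essentially the same route as the paper's proof, which simply defers to \citep[Lemmas 1--2]{agafonov2023inexact}: insert the exact Taylor polynomial $\Phi_{x,p}$, bound the truncation error by the classical $L_p$-Lipschitz remainder estimates, and bound the approximation error term by term via multilinearity and Cauchy--Schwarz on the contractions $(\nabla^i f(x)-G_i(x))[y-x]^{i-1}$. The tensor-norm bookkeeping you flag is indeed the only point requiring care, and it goes through with the recursively induced norm as you describe.
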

\vspace{-0.2cm}
Following the assumptions for the second-order method we introduce analogical assumptions for high-order method. 
\begin{assumption}
    [Unbiased stochastic gradient with bounded variance and \aaa{stochastic high-order derivatives with bounded variance}] 
    \label{as:p_ord_stoch}
        For any $x \in \R^d$ stochastic gradient $G_1(x, \xi)$ and stochastic high-order derivatives $G_i(x, \xi), ~ i=2,\ldots, p$ satisfy
        \begin{gather}
                \mathbb{E}[G_1(x, \xi) \mid x] = \nabla f(x), \quad
                \mathbb{E}\left[\|G_1(x, \xi)-  \nabla f(x)\|^2 \mid x\right] \leq \sigma_1^2, \label{eq:p_ord_stoch_grad_def} \\
                \mathbb{E}\left[\|G_i(x, \xi) -  \nabla^i f(x)\|^2 \mid x\right] \leq \sigma_i^2, ~ i=2, \ldots, p \nonumber.        
        \end{gather}
\end{assumption}

\begin{assumption}
    [Unbiased stochastic gradient with bounded variance and  inexact high-order derivatives]
    \label{as:p_ord_stoch_grad_inexact_hess} 
       For any  $x \in \R^d$ stochastic gradient $G_1(x, \xi)$ satisfy~\eqref{eq:p_ord_stoch_grad_def}. For given 
        $x,~y \in \R^d$ inexact high-order derivatives $G_i(x), ~ i=2,\ldots, p$ satisfy
        \begin{gather*}  
                \|(G_i(x) - \nabla^i f({x}))[y - x]^{i-1}\| \le \delta_i^{x, y} \|y - x\|^{i-1}.
        \end{gather*}
\end{assumption}
\vspace{-0.3cm}
To extend Algorithm~\ref{alg:inexact_acc_detailed} to tensor methods, we introduce a $p$-th order model of the function:
\begin{equation*}
\omega^{M, \bar{\delta}}_{x, p}(y) \eqdef \phi_{x, p}(y) + \tfrac{\bar{\delta}}{2}\|x-y\|^2 + \textstyle{\sum \limits_{i=3}^p} \tfrac{\eta_i \delta_i}{i!}\|x-y\|^i + \tfrac{pM}{(p+1)!}\|x-y\|^{p+1},
\end{equation*}
where $\eta_i > 0, ~ 3\leq i\leq p$. 
Next, we modify Definition \ref{def:inexact_sub} for the high order derivatives case
\begin{definition}
    \label{def:inexact_sub_p}
    Denote by $S_p^{M, \bar \delta, \tau} (x)$ a point $S \eqdef S_p^{M, \bar \delta, \tau} (x)$ such that
    $
        \|\nabla \omega_{x,p}^{M, \bar{\delta}}(S)\| \leq \tau.
    $
\end{definition}
\vspace{-0.3cm}
Now, we are prepared to introduce the method and state the convergence theorem.

\begin{algorithm}
  \caption{Accelerated Stochastic Tensor Method}\label{alg:inexact_acc_detailed_p_ord}
  \begin{algorithmic}[1]
      \STATE \textbf{Input:} $y_0 = x_0$ is starting point; constants $M \geq \frac{2}{p}L_p$; $\eta_i \geq 4$, ~ $3 \leq i \leq p$; 
     starting inexactness $\bar{\delta}_0  \geq 0$;  nonnegative nondecreasing sequences $\{\bk^t_i\}_{t \geq 0}$ for $i = 2, \ldots, p+1$, and
      \begin{equation}\label{eq:p_ord_alphas}
          \alpha_t = \tfrac{p+1}{t + p + 1}, ~~~ A_t = \textstyle{\prod \limits_{j=1}^t(1 -\alpha_j)}, ~~~ A_0 = 1.
      \end{equation}
      \vspace{-10pt}
      \begin{equation*}
        \psi_{0}(x):= \tfrac{\bk_2^{0}+\lambda_0}{2}\|x - x_0\|^2+ \textstyle{\sum \limits_{i=3}^p} \tfrac{\bk_i^{0}}{i!}\|x - x_0\|^{i} .
    \end{equation*}
    \useshortskip
    \FOR{$t \geq 0$} 
        \STATE 
                \[v_t = (1 - \alpha_t)x_t + \alpha_t y_t, \quad x_{t+1} = S_p^{M, \bar{\delta}_t, \tau}(v_{t})\]
        \STATE Compute 
            \begin{equation*}
            \begin{aligned}
                y_{t+1}=\arg \min _{x \in \mathbb{R}^{n}}&\left\{\psi_{t+1}(x):= \psi_{t}(x)+ \tfrac{\lambda_{t+1} - \lambda_{t}}{2}\|x - x_0\|^2 \right.\\
                &\left.+ \textstyle{\sum \limits_{i = 2}^{p} }
                \tfrac{\bk^{t+1}_i - \bk^{t}_i}{i!}\|x - x_0\|^i
                +\tfrac{\alpha_{t}}{A_{t}} 
                l(x,x_{t+1}) \right\}.
                \end{aligned}
            \end{equation*}
    \ENDFOR
  \end{algorithmic}
\end{algorithm}

\begin{theorem}\label{thm:acc_convergence_p_ord}
    Let Assumption \ref{as:lip_p} hold and $M \geq \frac{2}{p}L_p$.
    \begin{itemize}[leftmargin=10pt,nolistsep]
        \item 
            Let Assumption \ref{as:p_ord_stoch} hold. After $T \geq 1$ with parameters
            \begin{equation}
            \begin{gathered}
             \label{eq:p_ord_params}
                \bk_2^{t} =  O\ls\tfrac{\bar{\delta}_t\al_t^2}{A_t}\rs, ~  \bk_{i}^{t+1} = O \ls \tfrac{\alpha_{t+1}^{i}\delta_i}{A_{t+1}}\rs, ~
                \bk_{p+1}^{t+1} = O\ls \tfrac{\alpha_{t+1}^{p+1}M}{A_{t+1}}\rs, \\
                \lambda_t = O\ls \tfrac{\sigma_1}{R}t^{p+1/2}\rs,~
                \delta_t = O \ls \sigma_2 + \tfrac{\sigma_1 + \tau}{R}t^{\frac{3}{2}}\rs
            \end{gathered}
            \end{equation}
            we get the following bound 
            \begin{equation*}
            \label{eq:acc_convergence_stoch_p}
            \begin{aligned}
               \E \left[ f(x_{T}) - f(x^{\ast}) \right]  
               &\leq 
               O \ls \tfrac{\tau R}{\sqrt{T}} + \tfrac{\sigma_1 R}{\sqrt{T}} 
               + \textstyle\sum  _{i=2}^p \tfrac{\sigma_i R^{i}}{T^{i}}  
                + \tfrac{MR^{p+1}}{T^{p+1}}\rs.
            \end{aligned}
            \end{equation*}
        \item 
            Let Assumption \ref{as:p_ord_stoch_grad_inexact_hess} hold. After $T \geq 1$ with parameters defined in~\eqref{eq:params} and \\
            $\sigma_i = \delta_i = \max \limits_{t=1, \ldots, T} \delta_{i, t}^{v_{t-1}, x_{t}}$
            we get the following bound 
            \begin{equation*}
            \label{eq:acc_convergence_stoch_grad_inexact_hess_p}
            \begin{aligned}
               \E \left[ f(x_{T}) - f(x^{\ast}) \right]  
               &\leq 
               O \ls \tfrac{\tau R}{\sqrt{T}} + \tfrac{\sigma_1 R}{\sqrt{T}} 
               + \textstyle{\sum}  _{i=2}^p \tfrac{\delta_i R^{i}}{T^{i}}  
                + \tfrac{MR^{p+1}}{T^{p+1}}\rs.
            \end{aligned}
            \end{equation*}
    \end{itemize}    
\end{theorem}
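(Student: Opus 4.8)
The plan is to establish both bounds by the method of estimating sequences, directly generalizing the argument behind Theorem~\ref{thm:acc_convergence} (the case $p=2$) with Lemma~\ref{lm:p_ord_bound} playing the role of Lemma~\ref{lm:2ord_bound}. It is convenient to pass to the increasing sequence $\hat A_t := 1/A_t = \Theta(t^{p+1})$; a short computation from \eqref{eq:p_ord_alphas} gives $\tfrac{\alpha_t}{A_t} = \hat A_t - \hat A_{t-1}$, so the weights attached to the linear models $l(\cdot,x_{t+1})$ in Algorithm~\ref{alg:inexact_acc_detailed_p_ord} are exactly the increments of $\hat A_t$, and $\psi_t$ is a bona fide estimating sequence. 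The two relations to prove are the \emph{upper} estimate $\psi_t^\ast \le \psi_t(x^\ast) \le \hat A_t f^\ast + \Gamma_t$, obtained from convexity of $f$ and evaluating the regularizer mass $\Gamma_t = \tfrac{\bk_2^t + \lambda_t}{2}R^2 + \sum_{i=3}^{p}\tfrac{\bk_i^t}{i!}R^i + \tfrac{\bk_{p+1}^t}{(p+1)!}R^{p+1}$ at $x^\ast$, and the \emph{lower} estimate $\hat A_t\,\E[f(x_t)] \le \E[\psi_t^\ast] + \Delta_t$, proved by induction, where $\Delta_t$ collects the stochastic and inexactness errors. Subtracting and dividing by $\hat A_T$ gives $\E[f(x_T)-f^\ast] \le (\Gamma_T + \Delta_T)/\hat A_T$, so the proof reduces to bounding $\Gamma_T$ and $\Delta_T$ under the parameter choice \eqref{eq:p_ord_params}.

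The inductive (lower) step is where the work concentrates. Using the strong convexity of $\psi_t$ supplied by its $\lambda_t\|x-x_0\|^2$ and $\bk_i^t\|x-x_0\|^i$ terms, I would lower bound $\psi_{t+1}^\ast$ in terms of $\psi_t^\ast$ and the value of the freshly added linear model at its minimizer, and then convert this into a statement about $f(x_{t+1})$. The point $x_{t+1}=S_p^{M,\bar\delta_t,\tau}(v_t)$ approximately minimizes the regularized model $\omega^{M,\bar\delta_t}_{v_t,p}$, so by Lemma~\ref{lm:p_ord_bound} the gap $f(x_{t+1}) - \phi_{v_t,p}(x_{t+1})$ is controlled by $\sum_{i=2}^{p}\tfrac{1}{i!}\|(G_i-\nabla^i f)[x_{t+1}-v_t]^{i-1}\|\,\|x_{t+1}-v_t\| + \tfrac{L_p}{(p+1)!}\|x_{t+1}-v_t\|^{p+1}$. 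The model $\omega^{M,\bar\delta_t}_{v_t,p}$ is designed precisely so that its regularizers $\tfrac{\bar\delta_t}{2}\|\cdot\|^2$, $\tfrac{\eta_i\delta_i}{i!}\|\cdot\|^i$ and $\tfrac{pM}{(p+1)!}\|\cdot\|^{p+1}$ dominate these error terms (using $M\ge\tfrac2p L_p$ and $\eta_i\ge 4$), so the resulting decrease is clean up to the controllable $\delta_i$ (or $\sigma_i$) contributions. The approximate-minimization tolerance enters through Definition~\ref{def:inexact_sub_p}: the residual $\|\nabla\omega^{M,\bar\delta_t}_{v_t,p}(x_{t+1})\|\le\tau$ produces a term of order $\tau\|x_{t+1}-v_t\|$, absorbed by the same mechanism and routed into $\bar\delta_t$ alongside $\sigma_1$.

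Handling the stochastic gradient is the conceptual heart and the source of the optimal $\sigma_1 R/\sqrt T$ rate. Writing $l(x,x_{t+1}) = f(x_{t+1}) + \langle \nabla f(x_{t+1}), x-x_{t+1}\rangle + \langle e_t, x-x_{t+1}\rangle$ with $e_t := G_1(x_{t+1},\xi)-\nabla f(x_{t+1})$, the noise splits into a martingale-difference part $\langle e_t, x_0 - x_{t+1}\rangle$ that vanishes under total expectation because $\E[e_t\mid x_{t+1}]=0$ by \eqref{eq:p_ord_stoch_grad_def}, and a part $\langle e_t, x-x_0\rangle$ that is paired against the quadratic regularizer $\tfrac{\lambda_{t+1}}{2}\|x-x_0\|^2$. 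Completing the square contributes $O\!\left((\hat A_{t+1}-\hat A_t)^2\sigma_1^2/\lambda_{t+1}\right)$ to $\Delta_t$, and it is exactly the strong convexity injected by $\lambda_t$ (enabled by \emph{unbiasedness}) that yields $O(1/\sqrt T)$ rather than the slower rates of prior work. For the higher-order stochastic derivatives I would bound $\|(G_i-\nabla^i f)[x_{t+1}-v_t]^{i-1}\|\le \|G_i-\nabla^i f\|\,\|x_{t+1}-v_t\|^{i-1}$ and apply Cauchy--Schwarz with the variance bounds of Assumption~\ref{as:p_ord_stoch}, noting that the dependence of $x_{t+1}$ on the sampled $G_i(v_t,\xi)$ is handled because these errors live inside the very model whose minimizer is $x_{t+1}$ and are dominated by the $\eta_i\delta_i$ and $\bar\delta_t$ regularizers. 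For the inexact-derivative variant (Assumption~\ref{as:p_ord_stoch_grad_inexact_hess}) the directional bound $\delta_i^{v_{t-1},x_t}$ is used verbatim with $\delta_i=\max_t\delta_{i,t}^{v_{t-1},x_t}$, so the two parts of the theorem share one proof and differ only in how the order-$\ge 2$ terms are estimated.

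Finally I would sum the one-step inequalities, take total expectation to annihilate the martingale terms, and substitute \eqref{eq:p_ord_params}. Each parameter is calibrated so that its contribution, divided by $\hat A_T=\Theta(T^{p+1})$, matches a claimed term: $\bk_{p+1}^T=O(M)$ gives $MR^{p+1}/T^{p+1}$; the $\lambda_t$-weighted variance sum $\sum_t (\hat A_{t+1}-\hat A_t)^2\sigma_1^2/\lambda_t = \Theta(\sigma_1 R\,T^{p+1/2})$ (together with its $\tau$ analogue) gives $\sigma_1 R/\sqrt T + \tau R/\sqrt T$; and $\bk_i^T = O(\delta_i T^{p+1-i})$ gives $\delta_i R^i/T^i$ for $2\le i\le p$. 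The main obstacle is precisely this bookkeeping: verifying that errors accumulated with the growing weights $\hat A_{t+1}-\hat A_t=\Theta(t^p)$ against regularizers of the prescribed growth telescope to the advertised powers of $T$, and simultaneously checking that the strong-convexity constants coming from $\lambda_t,\bar\delta_t,\bk_i^t$ and from $M\ge\tfrac2pL_p,\ \eta_i\ge4$ are large enough to dominate both the model error of Lemma~\ref{lm:p_ord_bound} and the completed-square variance terms while preserving acceleration. Since all these sums reduce to elementary estimates of the form $\sum_{t\le T}t^{q}=\Theta(T^{q+1})$, no new idea beyond the $p=2$ analysis of Theorem~\ref{thm:acc_convergence} is needed, only careful tracking of the order-dependent exponents.
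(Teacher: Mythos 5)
Your proposal follows essentially the same route as the paper's proof: an estimating-sequence argument with an upper bound $\psi_t(x^\ast)\le f(x^\ast)/A_{t-1}+\Gamma_t+err_t^{up}$ and an inductive lower bound $\psi_t^\ast\ge f(x_t)/A_{t-1}-\Delta_t$, where the step progress is controlled by pairing the regularizers of $\omega^{M,\bar\delta}_{v_t,p}$ against the model errors of Lemma~\ref{lm:p_ord_bound} (the paper's Lemmas~\ref{lm:scalar_lb_cases_p} and~\ref{lem:step_p}, via the Fenchel-conjugate Lemma~\ref{lem:dual} and the uniform-convexity Lemma~\ref{lm:argmin}), the unbiased gradient noise split into a vanishing martingale part plus a completed square against the $\lambda_t$-regularizer, and the final bookkeeping done with $\sum_{t\le T}A_T\alpha_t^i/A_t=O\!\left(T^{1-i}\right)$. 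The only cosmetic differences are that you center the noise decomposition at $x_0$ rather than at $y_t$ and leave implicit the case analysis over which term of the composite regularizer $\zeta_{t+1}$ dominates, but neither changes the substance of the argument.
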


\vspace{-13pt}
\section{Strongly convex case}
\vspace{-8pt}
\label{sec:strong_cvx}
\begin{assumption}\label{as:lip_str_cvx}
    Function $f$ is $\mu$-strongly convex,  $p$ times differentiable on $\mathbb{R}^d$, and its $p$-th derivative is  Lipschitz continuous, i.e. for all $
    x, y \in \mathbb{R}^d$
    $$\|\nabla^p f(x) - \nabla^p f(y)\| \leq L_p \|x - y\|.$$ 
\end{assumption}
\vspace{-0.3cm}
To  exploit the strong convexity of the objective function and attain a linear convergence rate, we introduce a restarted version of Accelerated Stochastic Tensor Method (Algorithm \ref{alg:inexact_acc_detailed_p_ord}). In each iteration of  Restarted Accelerated Stochastic Tensor Method (Algorithm \ref{alg:restarts}), we execute Algorithm \ref{alg:inexact_acc_detailed_p_ord} for a predetermined number of iterations as specified in equation $\eqref{eq:restarts_iter}$. The output of this run is then used as the initial point for the subsequent iteration of Algorithm \ref{alg:inexact_acc_detailed}, which resets the parameters, and this process repeats iteratively.

\begin{algorithm}[t]
    \caption{Restarted Accelerated Stochastic Tensor Method}\label{alg:restarts}
	\textbf{Input}: $z_0  \in \mathbb{R}^d$, strong convexity parameter $\mu > 0$, $M \geq L_p$, and $R_0 > 0$ such that $\|z_0 - x^*\|\leq R_0$. 
	\textbf{For $s = 1, 2, \ldots$:} 
	\begin{enumerate}
	    \item Set $x_0 = z_{s - 1}$, $r_{s-1}= \frac{R_0}{2^{s-1}}$, and $R_{s-1} = \|z_{s-1} - x^*\|$.
	    \item Run Algorithm \ref{alg:inexact_acc_detailed_p_ord} 
	    for $t_s$ iterations, where
	    \begin{equation}\label{eq:restarts_iter}
             t_s = O (1)\textstyle{\max}
             \left\{
             1, 
             \ls\tfrac{\tau }{\mu r_{s-1}}\rs^2,
             \ls \tfrac{\sigma_1}{ \mu  r_{s-1}} \rs^2,
             \textstyle{\max \limits_{i = 2, \ldots, p}}\left(\tfrac{ \delta_i R_{s-1}^{i-2}}{\mu} \right)^\frac{1}{i},
             \left(\tfrac{ L_p R^{p-1}_{s-1}}{\mu}\right)^\frac{1}{p+1}\right\}.
	    \end{equation}
	    \item Set $z_s = x^{t_s}$.
	\end{enumerate}
\end{algorithm}

\begin{theorem}
\label{thm:ACRNM_conv_str_convex}
Let Assumption \ref{as:lip_str_cvx} hold and let parameters of Algorithm \ref{alg:inexact_acc_detailed} be chosen as in \eqref{eq:p_ord_params}. Let $\{z_s\}_{s \geq 0}$ be generated by Algorithm $\ref{alg:restarts}$ and $R > 0$ be such that $\|z_0 - x^*\|\leq R$. Then for any $s \geq 0$ we have 
    \begin{align}
         \E \|z_{s}-x^*\|^2 &\leq 4^{-s} R^2, 
       &
        \E f(z_{s}) - f(x^*) &
             \leq 2^{-2s-1} \mu R^2.
        \label{eq:restart_conv_func}
    \end{align}
Moreover, the total number of iterations to reach desired accuracy $\e:~f(z_s) - f(x^*)\leq \e$ in expectation is
\begin{equation*}\label{eq:ACRNM_complexity_}
    O\left( 
    \tfrac{(\tau + \sigma_1)^2}{\mu \e} 
    +
    \ls\sqrt{\tfrac{\sigma_2}{\mu}} + 1\rs\log\tfrac{f(z_0) - f(x^*)}{\e}
    + 
    \textstyle{\sum} _{i=3}^p \left(\tfrac{\sigma_i R^{i-2}}{\mu}\right)^{\frac{1}{i}}
    +  
    \left(\tfrac{L_p R^{p-1}}{\mu}\right)^\frac{1}{p+1} 
    \right).
\end{equation*}
\end{theorem}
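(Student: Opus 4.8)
The plan is to argue by induction on the restart counter $s$ that $\E\|z_s - x^*\|^2 \leq 4^{-s}R^2$, and then to read off the function-value bound and the iteration complexity as consequences. The base case $s=0$ is immediate from $\|z_0-x^*\|\leq R$. For the inductive step I condition on $z_{s-1}$, so that $R_{s-1}=\|z_{s-1}-x^*\|$ and hence the length $t_s$ from \eqref{eq:restarts_iter} are measurable, and I apply the tensor convergence guarantee (Theorem \ref{thm:acc_convergence_p_ord}) to the run of Algorithm \ref{alg:inexact_acc_detailed_p_ord} started at $x_0=z_{s-1}$. This yields
\[
\E\left[f(z_s)-f(x^*)\mid z_{s-1}\right] \leq O\left(\tfrac{(\tau+\sigma_1)R_{s-1}}{\sqrt{t_s}} + \textstyle\sum_{i=2}^{p}\tfrac{\delta_i R_{s-1}^{i}}{t_s^{i}} + \tfrac{M R_{s-1}^{p+1}}{t_s^{p+1}}\right).
\]
Strong convexity (Assumption \ref{as:lip_str_cvx}) gives $\tfrac{\mu}{2}\|z_s-x^*\|^2 \leq f(z_s)-f(x^*)$, which I use to convert the right-hand side into a bound on $\E[\|z_s-x^*\|^2\mid z_{s-1}]$.

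The core of the step is that each entry of the $\max$ in \eqref{eq:restarts_iter} is calibrated exactly so that the matching term is controlled, up to the free constant $C_0$ hidden in the $O(1)$ defining $t_s$. Writing $r_{s-1}=R_0 2^{-(s-1)}$, the inequality $t_s\geq C_0(\tfrac{\tau+\sigma_1}{\mu r_{s-1}})^2$ forces the gradient/tolerance term to be at most $C_0^{-1/2}\mu\, r_{s-1}R_{s-1}$; $t_s\geq C_0(\tfrac{\delta_i R_{s-1}^{i-2}}{\mu})^{1/i}$ forces the $i$-th curvature term to be at most $C_0^{-i}\mu R_{s-1}^2$; and $t_s\geq C_0(\tfrac{L_pR_{s-1}^{p-1}}{\mu})^{1/(p+1)}$ together with $M=\Theta(L_p)$ forces the exact term to be at most $O(C_0^{-(p+1)}\mu R_{s-1}^2)$. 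Taking full expectations, the curvature and exact terms contribute multiples of $\E R_{s-1}^2=\E\|z_{s-1}-x^*\|^2\leq 4^{-(s-1)}R^2=4\,r_s^2$, while the cross term $r_{s-1}\E R_{s-1}$ is handled by Jensen, $\E R_{s-1}\leq\sqrt{\E R_{s-1}^2}\leq r_{s-1}$, again giving $r_{s-1}^2=4\,r_s^2$. Collecting the finitely many constants I obtain $\E\|z_s-x^*\|^2\leq C(C_0^{-1/2}+C_0^{-2}+\dots)\,r_s^2$, and fixing $C_0$ large enough makes the prefactor at most one, closing the induction at $r_s^2=4^{-s}R^2$. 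Feeding the same conditional bound (with the constant tuned to $\tfrac12\mu$) through full expectation yields $\E[f(z_s)-f(x^*)]\leq \tfrac12\mu\,r_s^2=2^{-2s-1}\mu R^2$ directly, completing \eqref{eq:restart_conv_func}.

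For the complexity I sum $t_s$ over the $S=O(\log\tfrac{\mu R^2}{\e})=O(\log\tfrac{f(z_0)-f(x^*)}{\e})$ restarts needed to drive $2^{-2s-1}\mu R^2$ below $\e$ (the two logs being equivalent up to constants since strong convexity and Lipschitzness sandwich $f(z_0)-f(x^*)$ between constant multiples of $\mu R^2$). Using $\max\{\cdot\}\leq\sum\{\cdot\}$ and $\E R_{s-1}^{(i-2)}\leq r_{s-1}^{(i-2)}$ (Jensen/power-mean applied to $\E R_{s-1}^2\leq r_{s-1}^2$) I replace every random radius by the deterministic schedule $r_{s-1}$ and bound each term separately. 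The tolerance/gradient term sums as a geometric series in $1/r_{s-1}^2=4^{s-1}/R_0^2$, whose dominant last term is $\Theta(4^S/R_0^2)=\Theta(\mu/\e)$, producing $\tfrac{(\tau+\sigma_1)^2}{\mu\e}$; the $i=2$ Hessian term equals $\sqrt{\sigma_2/\mu}$ independently of $s$ and the floor ``$\max\{1,\cdot\}$'' contributes $1$ per stage, so together they give $(\sqrt{\sigma_2/\mu}+1)\log\tfrac{f(z_0)-f(x^*)}{\e}$; and for $i\geq 3$ and the exact term the positive exponents $\tfrac{i-2}{i}$ and $\tfrac{p-1}{p+1}$ make the summands decay geometrically, so the sums are dominated by the first restart with $R_0=R$, yielding $\textstyle\sum_{i=3}^{p}(\tfrac{\sigma_i R^{i-2}}{\mu})^{1/i}$ and $(\tfrac{L_pR^{p-1}}{\mu})^{1/(p+1)}$.

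I expect the main obstacle to be the probabilistic bookkeeping around the random radii $R_{s-1}$: because $t_s$ deliberately mixes the deterministic target $r_{s-1}$ (in the gradient/tolerance terms) with the random distance $R_{s-1}$ (in the curvature terms), closing the induction requires conditioning on $z_{s-1}$, applying Theorem \ref{thm:acc_convergence_p_ord} conditionally, and invoking Jensen's inequality both for the cross term $r_{s-1}\E R_{s-1}$ and for the fractional moments $\E R_{s-1}^{i-2}$ in the complexity sum. A clean write-up also needs the free constant $C_0$ in \eqref{eq:restarts_iter} to be fixed once and for all so that the per-stage contraction factor stays below one simultaneously for every term.
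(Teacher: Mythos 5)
Your proposal is correct and follows essentially the same route as the paper's proof: induction on the restart counter with conditioning on $z_{s-1}$, applying the convergence guarantee of Algorithm \ref{alg:inexact_acc_detailed_p_ord} conditionally, converting the function gap to squared distance via strong convexity, calibrating each entry of the $\max$ in \eqref{eq:restarts_iter} against its matching convergence term, handling the cross term $r_{s-1}\E R_{s-1}$ by Jensen, and then summing the $t_s$ with the geometric-series/first-restart/last-restart accounting for the respective terms. If anything, your explicit treatment of the fractional moments $\E R_{s-1}^{i-2}$ via Jensen and the one-time fixing of the constant $C_0$ is slightly more careful than the paper's write-up.
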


Now, let us make a few observations regarding the results obtained in Theorem \ref{thm:ACRNM_conv_str_convex}. For simplicity let solution of the subproblem be exact and $p=2$, i.e. we do the restarts of the Accelerated Stochastic Cubic Newton, so the total number of iterations is  
\begin{equation} \label{eq:restarted_cubic_convergence}
    O\left( 
    \tfrac{\sigma_1^2}{\mu \e}
    +
    \ls\sqrt{\tfrac{\sigma_2}{\mu}} +1 \rs \log\tfrac{f(z_0) - f(x^*)}{\e} 
    +
    \left(\tfrac{L_2R}{\mu}\right)^\frac{1}{3} 
    \right).
\end{equation}
Next, let's consider solving the stochastic optimization problem $\min \limits_{x \in \R^d} F(x) = \E [f(x, \xi)]$ using the mini-batch Restarted Accelerated Stochastic Cubic Newton method (Algorithm \ref{alg:restarts}) with $p=2$. In this approach, the mini-batched stochastic gradient is computed as $\tfrac{1}{r_1}\textstyle \sum_{i=1}^{r_1} \nabla f(x, \xi_i)$ and the mini-batched stochastic Hessian is computed as $\tfrac{1}{r_2}\textstyle \sum_{i=1}^{r_2} \nabla^2 f(x, \xi_i)$, where $r_1$ and $r_2$ represent the batch sizes for gradients and Hessians, respectively.
\\
From the convergence estimates in  \eqref{eq:restart_conv_func} and \eqref{eq:restarted_cubic_convergence}, we can determine the required sample sizes for computing the batched gradients and batched Hessians. Specifically, we have $r_1 = \tilde{O}\ls \tfrac{\sigma_1^2}{\e \mu^{2/3}}\rs$ and $r_2 = O\ls \tfrac{\sigma_2}{\mu^{1/3}}\rs$. Consequently, the overall number of stochastic gradient computations is $O\ls \tfrac{\sigma_1^2}{\e \mu^{2/3}}\rs$, which is similar to the accelerated SGD method \citep{ghadimi2013optimal}. Interestingly, the number of stochastic Hessian computations scales linearly with the desired accuracy $\e$, i.e., $O\left(\tfrac{\sigma_2}{\mu^{1/3}}\log\frac{1}{\e}\right)$. 
\\
This result highlights the practical importance of second-order methods. Since the batch size of the Hessian is constant, there is no need to adjust it as the desired solution as accuracy increases. This is particularly useful in distributed optimization problems under the assumption of beta similarity~\citep{zhang2015disco}. In methods with such assumption~\citep{zhang2015disco, daneshmand2021newton, agafonov2023inexact}, the server stores a Hessian sample that provides a "good" approximation of the exact Hessian of the objective function. Algorithms utilize this approximation instead of exchanging curvature information with the workers. The constant batch size allows for accurately determining the necessary sample size to achieve fast convergence to any desired accuracy.
\vspace{-13pt}
\section{Experiments}\label{sec:experiments}
\vspace{-8pt}
In this section, we present numerical experiments conducted to demonstrate the efficiency of our proposed methods. We consider logistic regression problems of the form:
\begin{equation*}
    f(x) = \E \left[\log(1+ \exp(-b_\xi \cdot a_{\xi}^{\top}x)) \right],
\end{equation*}
where $(a_{\xi},b_{\xi})$ are the training samples described by features  $a_{\xi}\in \R^d$ and class labels $b_i \in \{-1,1\}$.

\textbf{Setup.} We present results on the \texttt{a9a} dataset ($d=123$) from LibSVM by \cite{chang2011libsvm}. We demonstrate the performance of Accelerated Stochastic Cubic Newton in three regimes: deterministic oracles (Figure \ref{fig:deterministic}), stochastic oracles with the same batch size for gradient and Hessians (Figures~\ref{subfig:stoch_train},~\ref{subfig:stoch_test}), and stochastic oracles with smaller batch size for Hessians (Figures~\ref{subfig:hess_train},~\ref{subfig:hess_test}). The final mode is especially intriguing because the convergence component of Algorithm~\ref{alg:inexact_acc_detailed} associated with gradient noise decreases as $1/\sqrt{t}$, while the component related to Hessian noise decreases as $1/t^2$. This enables the use of smaller Hessian batch sizes (see Corollary~\ref{cor:2ord_inexactenss_acc}).
\vspace{-0.5cm}
\begin{wrapfigure}[14]{r}{0.3\textwidth}
  \begin{center}
\includegraphics[width=0.3\textwidth]{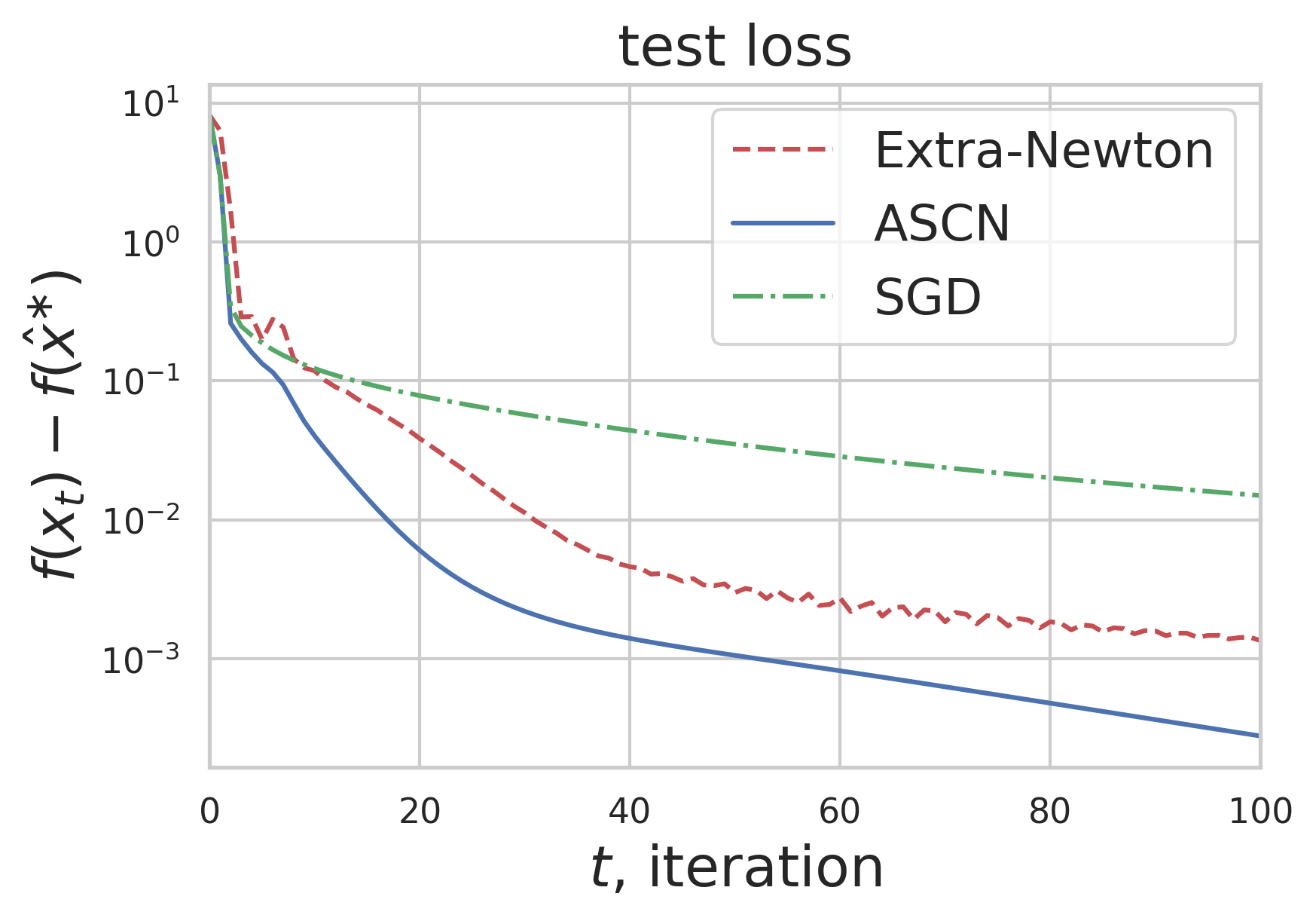}
  \end{center}
  \caption{Logistic regression on \texttt{a9a} with deterministic oracles}
  \label{fig:deterministic}
\end{wrapfigure}

For stochastic experiments, we randomly split the dataset into training ($30000$ data samples) and test ($2561$ data samples) sets. The methods randomly sample data from the training set and do not have access to the test data. In this case, the training loss represents finite sum minimization properties, and the test loss represents expectation minimization.
We compare the performance of the SGD, Extra-Newton (EN), and Accelerated Stochastic Cubic Newton (ASCN).  We present experiments for fine-tuned hyperparameters in Figures~\ref{fig:deterministic},~\ref{fig:stochastic}. For SGD, we've fine-tuned $1$ parameter $lr$. For EN, we've fine-tuned $2$ parameters: $\gamma$ and $\beta_0$. For ASCN, we've fine-tuned $2$ parameters: $M$ and $\tfrac{\sigma_1}{R}$ (only for stochastic case) as the entity, also $\tau = 0$ and $\sigma_2 = 0$ as they are dominated by $\tfrac{\sigma_1}{R}$. To demonstrate the globalization properties of the methods, we consider the starting point $x_0$ far from the solution, specifically $x_0 = 3\cdot e $, where $e$ is the all-one vector. 
All methods are implemented as PyTorch 2.0 optimizers. Additional details and experiments are provided in the Appendix~\ref{app:exp}.

\vspace{-0.3cm}
\begin{figure}[ht]
\centering
     \begin{subfigure}{0.24\textwidth}
         \centering       \includegraphics[width=\textwidth]{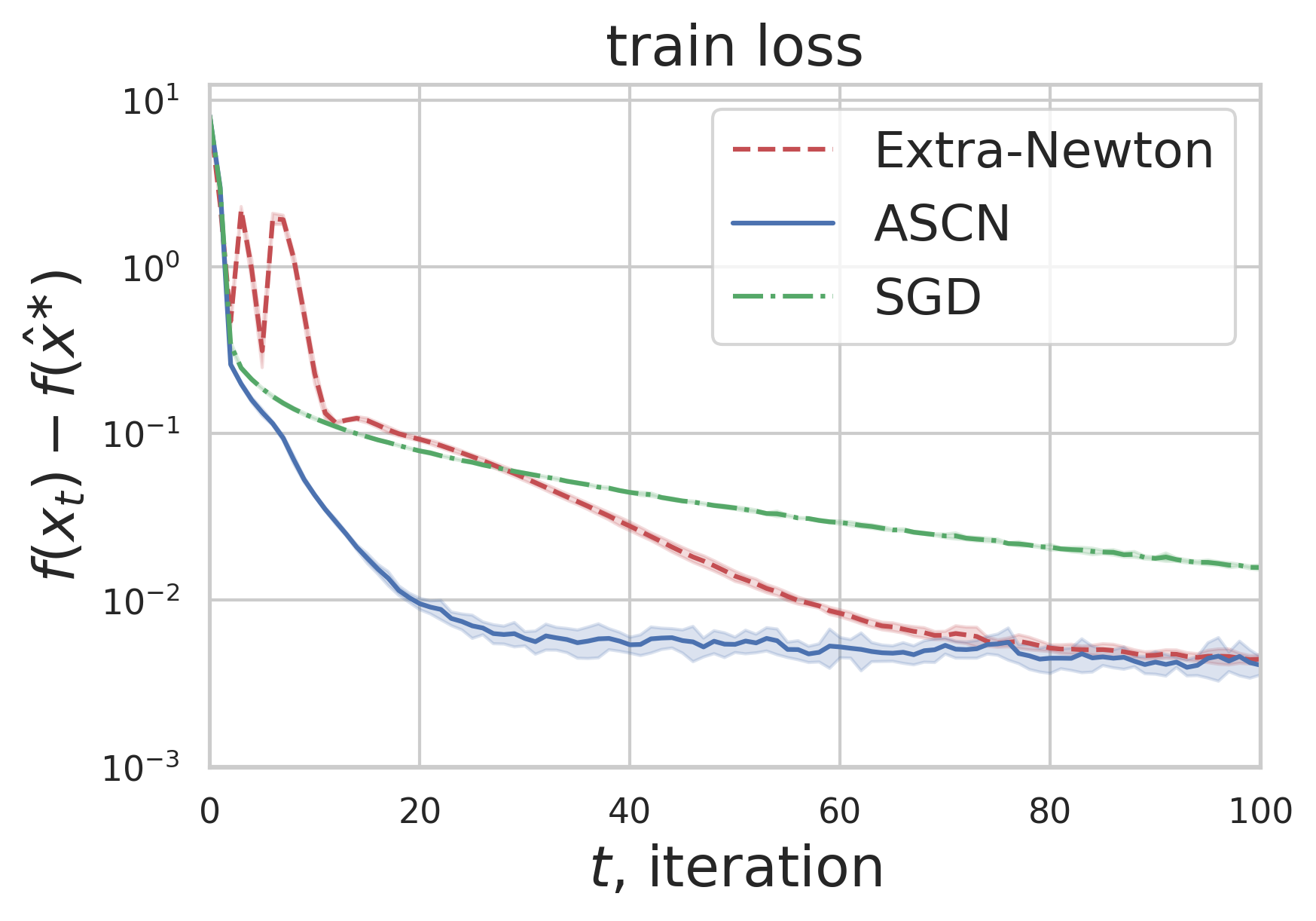}\vspace{-2mm}
         \caption{Train loss. Gradient   and Hessian batch sizes are $1500$}
         \label{subfig:stoch_train}
     \end{subfigure}
     \hfill
     \begin{subfigure}{0.24\textwidth}
         \centering
         \includegraphics[width=\textwidth]{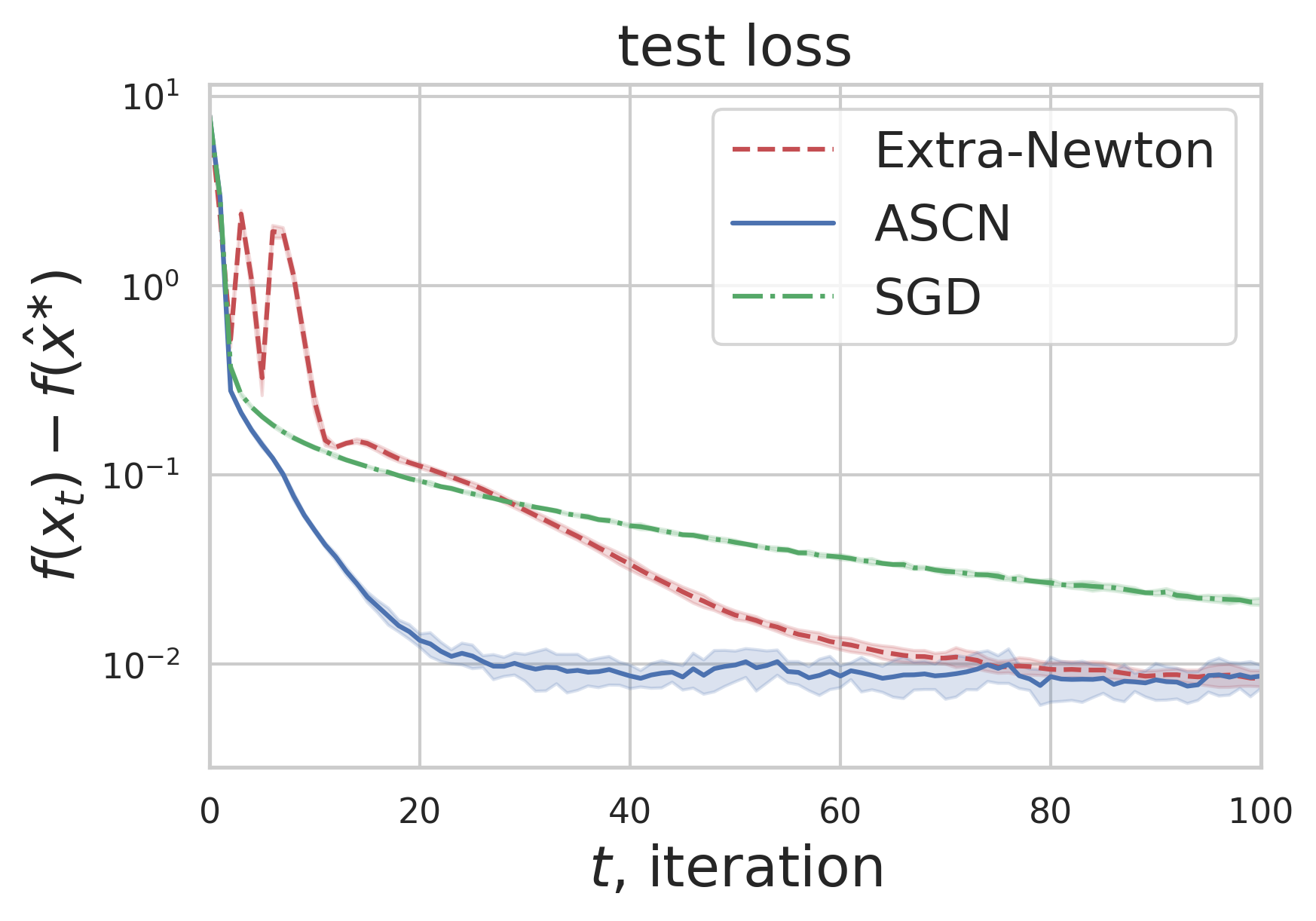}\vspace{-2mm}
         \caption{Test loss. Gradient   and Hessian batch sizes are $1500$}
         \label{subfig:stoch_test}
     \end{subfigure}
    \hfill
     \begin{subfigure}{0.24\textwidth}
         \centering
         \includegraphics[width=\textwidth]{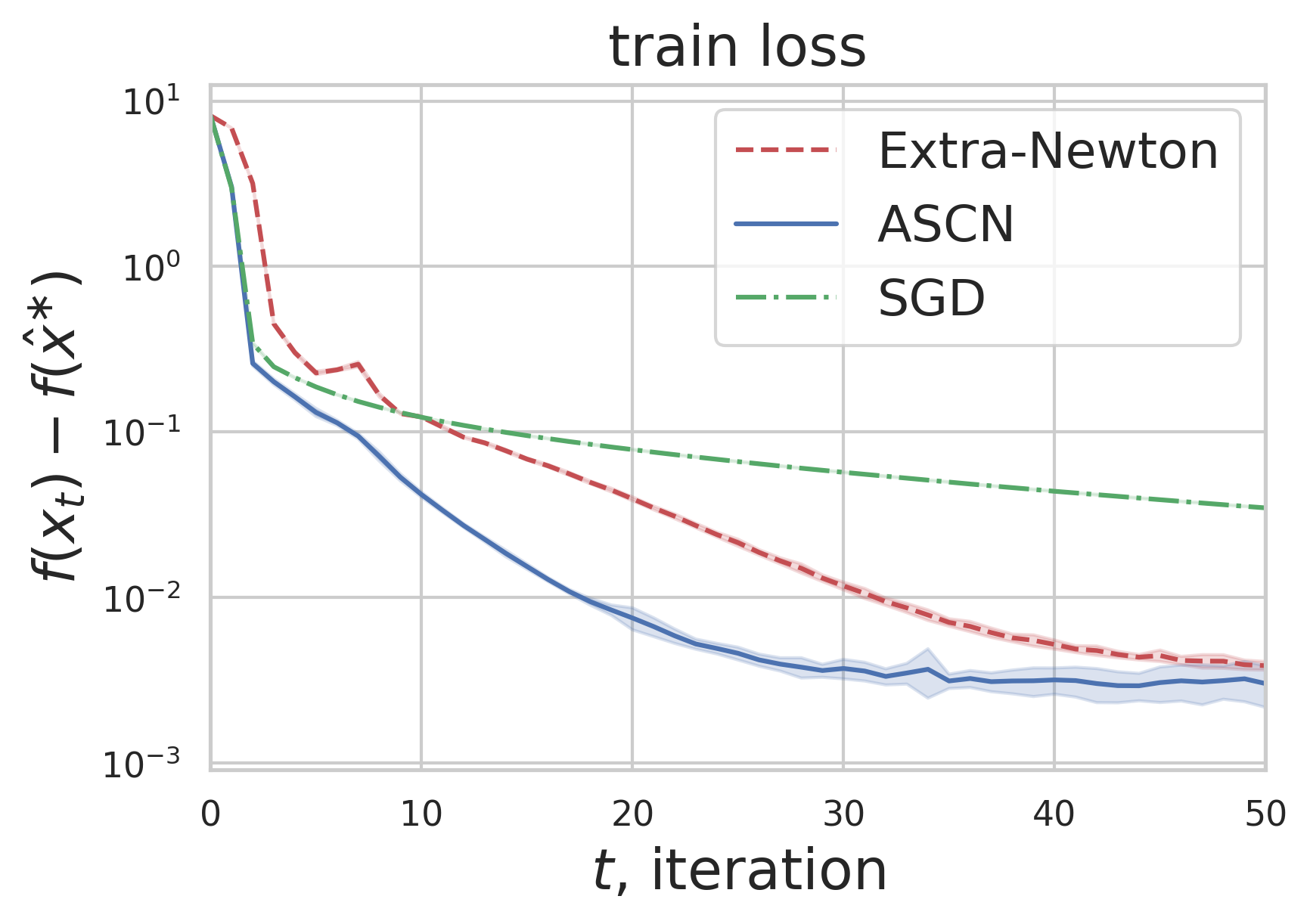}\vspace{-2mm}
         \caption{Train loss. Gradient   batch size is $10000$, Hessian batch size is $150$}
         \label{subfig:hess_train}
     \end{subfigure}
    \hfill
     \begin{subfigure}{0.24\textwidth}
         \centering
         \includegraphics[width=\textwidth]{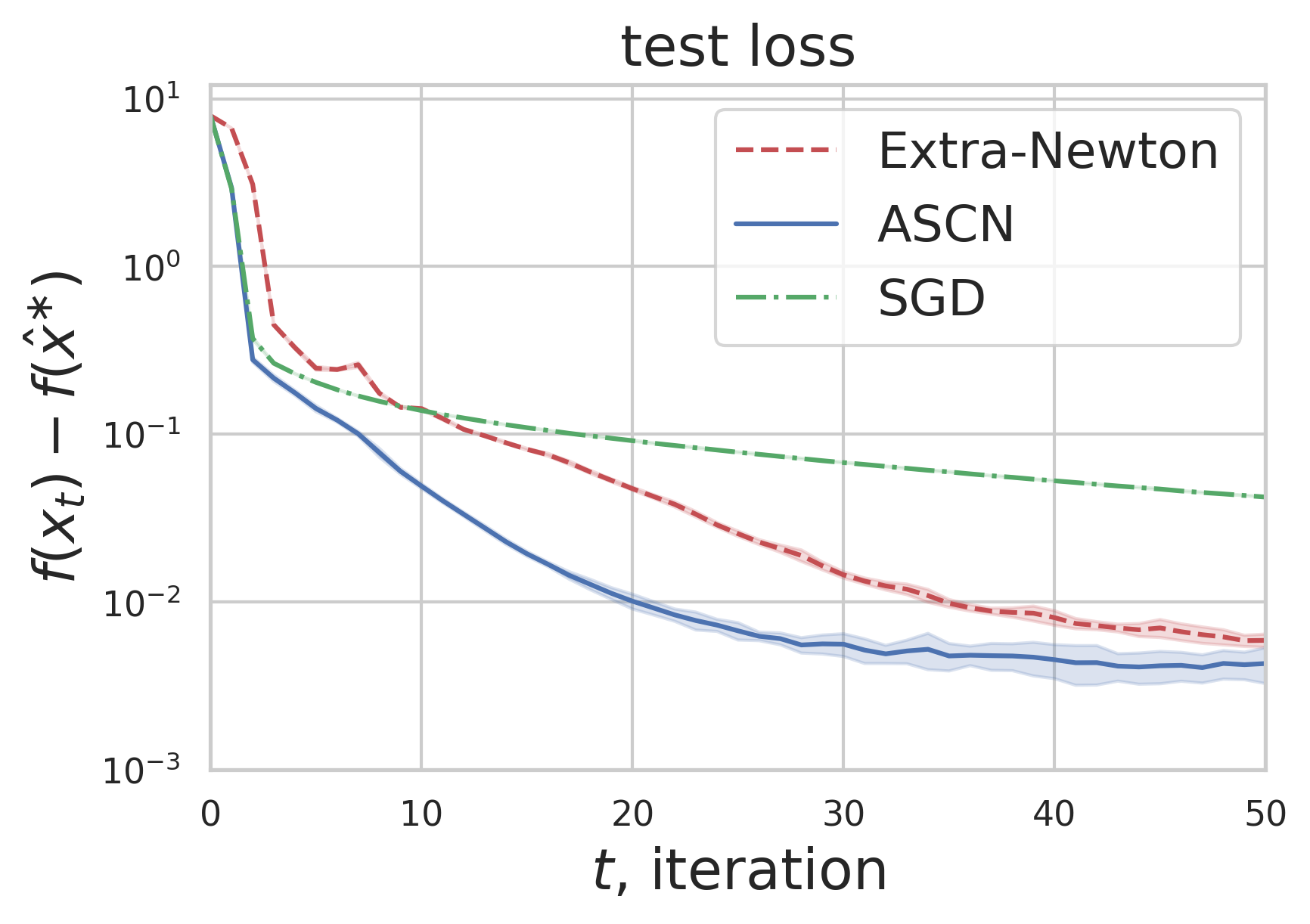}\vspace{-2mm}
         \caption{Test loss. Gradient   batch size is $10000$, Hessian batch size is $150$}
         \label{subfig:hess_test}
     \end{subfigure}
        \caption{Logistic regression on \texttt{a9a} with stochastic oracles}
        \label{fig:stochastic}
\end{figure}
\vspace{-0.3cm}
{\bf Results.} The ASCN method proposed in this study consistently outperforms Extra-Newton and SGD across all experimental scenarios. In deterministic settings, ASCN exhibits a slight superiority over Extra-Newton. In stochastic experiments, we observe a notable improvement as well. However,  it's worth noting that in stochastic regime as we approach convergence, all methods tend to converge to the same point. This convergence pattern is primarily influenced by the stochastic gradient noise $\tfrac{\sigma_1 R}{\sqrt{T}}$ term, which dominates in rates as we converge to solution. Furthermore, experiment with different batch sizes for gradients and Hessians support the theory, confirming that the Hessian inexactness term in ASCN $  \tfrac{\sigma_2 R^2}{T^2}  $ has faster rate than the corresponding term in Extra-Newton  $ \tfrac{\sigma_2 R^2}{T^{3/2}} $.
To conclude, the experiments show that second-order information could significantly accelerate the convergence. Moreover, the methods need significantly less stochastic Hessians than stochastic gradients. 
\vspace{-13pt}
\section{Conclusion}
\vspace{-8pt}
In summary, our contribution includes a novel stochastic accelerated second-order algorithm for convex and strongly convex optimization. We establish a lower bound for stochastic second-order optimization and prove our algorithm's achievement of optimal convergence in both gradient and Hessian inexactness. Additionally, we introduce a tensor generalization of second-order methods for stochastic high-order derivatives. Nevertheless, it's essential to acknowledge certain limitations. Like other globally convergent second-order methods, our algorithm involves a subproblem that necessitates an additional subroutine to find its solution. To mitigate this challenge, we offer theoretical insights into the required accuracy of the subproblem's solution.
Future research could involve enhancing the adaptiveness of the algorithm. Additionally, there is a potential for constructing optimal stochastic second-order and tensor methods by incorporating stochastic elements into existing exact methods. These efforts could further improve both practical and theoretical aspects of stochastic second-order and high-order optimization.

\newpage
\textbf{Acknowledgments.}
This work was supported by a grant for research centers in the field of artificial intelligence, provided by the Analytical Center for the Government of the Russian Federation in accordance with the subsidy agreement (agreement identifier 000000D730321P5Q0002) and the agreement with the Moscow Institute of Physics and Technology dated November 1, 2021 No. 70-2021-00138. This work was supported by Hasler Foundation Program: Hasler Responsible AI (project number 21043). This work was supported by the Swiss National Science Foundation (SNSF) under grant number 200021\_205011. Research was sponsored by the Army Research Office and was accomplished under Grant Number W911NF-24-1-0048.

\textbf{Ethics Statement.} The authors acknowledge that they have read and adhere to the ICLR Code of Ethics.

\textbf{Reproducibility Statement.} The experimental details are provided in Section~\ref{sec:experiments} and Appendix~\ref{app:exp}. The PyTorch code for the methods is available on \url{https://github.com/OPTAMI/OPTAMI}.

\bibliography{bibliography,kamzolov_full,Bibliography-PM,Bibliography-DQV}

\begin{thebibliography}{67}
\providecommand{\natexlab}[1]{#1}
\providecommand{\url}[1]{\texttt{#1}}
\expandafter\ifx\csname urlstyle\endcsname\relax
  \providecommand{\doi}[1]{doi: #1}\else
  \providecommand{\doi}{doi: \begingroup \urlstyle{rm}\Url}\fi

\bibitem[Agafonov et~al.(2021)Agafonov, Dvurechensky, Scutari, Gasnikov, Kamzolov, Lukashevich, and Daneshmand]{agafonov2021accelerated}
Artem Agafonov, Pavel Dvurechensky, Gesualdo Scutari, Alexander Gasnikov, Dmitry Kamzolov, Aleksandr Lukashevich, and Amir Daneshmand.
\newblock An accelerated second-order method for distributed stochastic optimization.
\newblock In \emph{2021 60th IEEE Conference on Decision and Control (CDC)}, pp.\  2407--2413, 2021.
\newblock ISBN 2576-2370.
\newblock \doi{10.1109/CDC45484.2021.9683400}.
\newblock URL \url{https://doi.org/10.1109/CDC45484.2021.9683400}.

\bibitem[Agafonov et~al.(2023)Agafonov, Kamzolov, Dvurechensky, Gasnikov, and Tak{\'a}{\v{c}}]{agafonov2023inexact}
Artem Agafonov, Dmitry Kamzolov, Pavel Dvurechensky, Alexander Gasnikov, and Martin Tak{\'a}{\v{c}}.
\newblock Inexact tensor methods and their application to stochastic convex optimization.
\newblock \emph{Optimization Methods and Software}, 0\penalty0 (0):\penalty0 1--42, 2023.
\newblock \doi{10.1080/10556788.2023.2261604}.
\newblock URL \url{https://doi.org/10.1080/10556788.2023.2261604}.

\bibitem[Agarwal \& Hazan(2018)Agarwal and Hazan]{pmlr-v75-agarwal18a}
Naman Agarwal and Elad Hazan.
\newblock Lower bounds for higher-order convex optimization.
\newblock In Sébastien Bubeck, Vianney Perchet, and Philippe Rigollet (eds.), \emph{Proceedings of the 31st Conference On Learning Theory}, volume~75 of \emph{Proceedings of Machine Learning Research}, pp.\  774--792. PMLR, 06--09 Jul 2018.
\newblock URL \url{https://proceedings.mlr.press/v75/agarwal18a.html}.

\bibitem[Antonakopoulos et~al.(2022)Antonakopoulos, Kavis, and Cevher]{antonakopoulos2022extra}
Kimon Antonakopoulos, Ali Kavis, and Volkan Cevher.
\newblock Extra-newton: A first approach to noise-adaptive accelerated second-order methods.
\newblock In S~Koyejo, S~Mohamed, A~Agarwal, D~Belgrave, K~Cho, and A~Oh (eds.), \emph{Advances in Neural Information Processing Systems}, volume~35, pp.\  29859--29872. Curran Associates, Inc., 2022.
\newblock URL \url{https://proceedings.neurips.cc/paper_files/paper/2022/file/c10804702be5a0cca89331315413f1a2-Paper-Conference.pdf}.

\bibitem[Arjevani et~al.(2019)Arjevani, Shamir, and Shiff]{arjevani2019oracle}
Yossi Arjevani, Ohad Shamir, and Ron Shiff.
\newblock Oracle complexity of second-order methods for smooth convex optimization.
\newblock \emph{Mathematical Programming}, 178:\penalty0 327--360, 2019.
\newblock ISSN 1436-4646.
\newblock \doi{10.1007/s10107-018-1293-1}.
\newblock URL \url{https://doi.org/10.1007/s10107-018-1293-1}.

\bibitem[Baes(2009)]{baes2009estimate}
Michel Baes.
\newblock Estimate sequence methods: extensions and approximations.
\newblock \emph{Institute for Operations Research, ETH, Z{\"u}rich, Switzerland}, 2\penalty0 (1), 2009.

\bibitem[Bellavia et~al.(2022)Bellavia, Gurioli, Morini, and Toint]{bellavia2022adaptive}
S~Bellavia, G~Gurioli, B~Morini, and Ph.L. Toint.
\newblock Adaptive regularization for nonconvex optimization using inexact function values and randomly perturbed derivatives.
\newblock \emph{Journal of Complexity}, 68:\penalty0 101591, 2022.
\newblock ISSN 0885-064X.
\newblock \doi{https://doi.org/10.1016/j.jco.2021.101591}.
\newblock URL \url{https://www.sciencedirect.com/science/article/pii/S0885064X21000467}.

\bibitem[Bellavia \& Gurioli(2022)Bellavia and Gurioli]{bellavia2022stochastic}
Stefania Bellavia and Gianmarco Gurioli.
\newblock Stochastic analysis of an adaptive cubic regularization method under inexact gradient evaluations and dynamic hessian accuracy.
\newblock \emph{Optimization}, 71:\penalty0 227--261, 2022.
\newblock \doi{10.1080/02331934.2021.1892104}.
\newblock URL \url{https://doi.org/10.1080/02331934.2021.1892104}.

\bibitem[Bubeck et~al.(2019)Bubeck, Jiang, Lee, Li, and Sidford]{bubeck2019near}
Sébastien Bubeck, Qijia Jiang, Yin~Tat Lee, Yuanzhi Li, and Aaron Sidford.
\newblock Near-optimal method for highly smooth convex optimization.
\newblock In Alina Beygelzimer and Daniel Hsu (eds.), \emph{Proceedings of the Thirty-Second Conference on Learning Theory}, volume~99, pp.\  492--507. PMLR, 5 2019.
\newblock URL \url{https://proceedings.mlr.press/v99/bubeck19a.html}.

\bibitem[Carmon et~al.(2022)Carmon, Hausler, Jambulapati, Jin, and Sidford]{carmon2022optimal}
Yair Carmon, Danielle Hausler, Arun Jambulapati, Yujia Jin, and Aaron Sidford.
\newblock Optimal and adaptive monteiro-svaiter acceleration.
\newblock In S~Koyejo, S~Mohamed, A~Agarwal, D~Belgrave, K~Cho, and A~Oh (eds.), \emph{Advances in Neural Information Processing Systems}, volume~35, pp.\  20338--20350. Curran Associates, Inc., 2022.
\newblock URL \url{https://proceedings.neurips.cc/paper_files/paper/2022/file/7ff97417474268e6b5a38bcbfae04944-Paper-Conference.pdf}.

\bibitem[Cartis \& Scheinberg(2018)Cartis and Scheinberg]{cartis2018global}
Coralia Cartis and Katya Scheinberg.
\newblock Global convergence rate analysis of unconstrained optimization methods based on probabilistic models.
\newblock \emph{Mathematical Programming}, 169:\penalty0 337--375, 2018.
\newblock ISSN 1436-4646.
\newblock \doi{10.1007/s10107-017-1137-4}.
\newblock URL \url{https://doi.org/10.1007/s10107-017-1137-4}.

\bibitem[Cartis et~al.(2011{\natexlab{a}})Cartis, Gould, and Toint]{cartis2011adaptive}
Coralia Cartis, Nicholas~IM Gould, and Philippe~L Toint.
\newblock Adaptive cubic regularisation methods for unconstrained optimization. part i: motivation, convergence and numerical results.
\newblock \emph{Mathematical Programming}, 127\penalty0 (2):\penalty0 245--295, 2011{\natexlab{a}}.

\bibitem[Cartis et~al.(2011{\natexlab{b}})Cartis, Gould, and Toint]{cartis2011adaptive2}
Coralia Cartis, Nicholas~IM Gould, and Philippe~L Toint.
\newblock Adaptive cubic regularisation methods for unconstrained optimization. part ii: worst-case function-and derivative-evaluation complexity.
\newblock \emph{Mathematical programming}, 130\penalty0 (2):\penalty0 295--319, 2011{\natexlab{b}}.

\bibitem[Cartis et~al.(2017)Cartis, Gould, and Toint]{cartis2017improved}
Coralia Cartis, Nicholas~IM Gould, and Philippe~L Toint.
\newblock Improved second-order evaluation complexity for unconstrained nonlinear optimization using high-order regularized models.
\newblock \emph{arXiv preprint arXiv:1708.04044}, 2017.

\bibitem[Chang \& Lin(2011)Chang and Lin]{chang2011libsvm}
Chih-Chung Chang and Chih-Jen Lin.
\newblock Libsvm: A library for support vector machines.
\newblock \emph{ACM transactions on intelligent systems and technology (TIST)}, 2\penalty0 (3):\penalty0 1--27, 2011.

\bibitem[Daneshmand et~al.(2021)Daneshmand, Scutari, Dvurechensky, and Gasnikov]{daneshmand2021newton}
Amir Daneshmand, Gesualdo Scutari, Pavel Dvurechensky, and Alexander Gasnikov.
\newblock Newton method over networks is fast up to the statistical precision.
\newblock In \emph{International Conference on Machine Learning}, pp.\  2398--2409. PMLR, 2021.

\bibitem[Devolder et~al.(2014)Devolder, Glineur, and Nesterov]{devolder2014first}
Olivier Devolder, Fran{\c{c}}ois Glineur, and Yurii Nesterov.
\newblock First-order methods of smooth convex optimization with inexact oracle.
\newblock \emph{Mathematical Programming}, 146:\penalty0 37--75, 2014.
\newblock ISSN 1436-4646.
\newblock \doi{10.1007/s10107-013-0677-5}.
\newblock URL \url{https://doi.org/10.1007/s10107-013-0677-5}.

\bibitem[Doikov \& Nesterov(2023)Doikov and Nesterov]{doikov2023gradient}
Nikita Doikov and Yurii Nesterov.
\newblock Gradient regularization of {N}ewton method with {B}regman distances.
\newblock \emph{Mathematical Programming}, 2023.
\newblock ISSN 1436-4646.
\newblock \doi{10.1007/s10107-023-01943-7}.
\newblock URL \url{https://doi.org/10.1007/s10107-023-01943-7}.

\bibitem[Doikov \& Nesterov(2020)Doikov and Nesterov]{doikov2020inexact}
Nikita Doikov and Yurii~E Nesterov.
\newblock Inexact tensor methods with dynamic accuracies.
\newblock In \emph{ICML}, pp.\  2577--2586, 2020.

\bibitem[Doikov et~al.(2023)Doikov, Chayti, and Jaggi]{doikov2023second}
Nikita Doikov, El~Mahdi Chayti, and Martin Jaggi.
\newblock Second-order optimization with lazy {H}essians.
\newblock In Andreas Krause, Emma Brunskill, Kyunghyun Cho, Barbara Engelhardt, Sivan Sabato, and Jonathan Scarlett (eds.), \emph{Proceedings of the 40th International Conference on Machine Learning}, volume 202, pp.\  8138--8161. PMLR, 9 2023.
\newblock URL \url{https://proceedings.mlr.press/v202/doikov23a.html}.

\bibitem[Doikov et~al.(2024)Doikov, Mishchenko, and Nesterov]{doikov2024super}
Nikita Doikov, Konstantin Mishchenko, and Yurii Nesterov.
\newblock Super-universal regularized newton method.
\newblock \emph{SIAM Journal on Optimization}, 34:\penalty0 27--56, 2024.
\newblock \doi{10.1137/22M1519444}.
\newblock URL \url{https://doi.org/10.1137/22M1519444}.

\bibitem[Dvurechensky et~al.(2022)Dvurechensky, Kamzolov, Lukashevich, Lee, Ordentlich, Uribe, and Gasnikov]{dvurechensky2022hyperfast}
Pavel Dvurechensky, Dmitry Kamzolov, Aleksandr Lukashevich, Soomin Lee, Erik Ordentlich, César~A Uribe, and Alexander Gasnikov.
\newblock Hyperfast second-order local solvers for efficient statistically preconditioned distributed optimization.
\newblock \emph{EURO Journal on Computational Optimization}, 10:\penalty0 100045, 2022.
\newblock ISSN 2192-4406.
\newblock \doi{https://doi.org/10.1016/j.ejco.2022.100045}.
\newblock URL \url{https://www.sciencedirect.com/science/article/pii/S2192440622000211}.

\bibitem[Gasnikov et~al.(2019{\natexlab{a}})Gasnikov, Dvurechensky, Gorbunov, Vorontsova, Selikhanovych, and Uribe]{gasnikov2019optimal}
Alexander Gasnikov, Pavel Dvurechensky, Eduard Gorbunov, Evgeniya Vorontsova, Daniil Selikhanovych, and César~A Uribe.
\newblock Optimal tensor methods in smooth convex and uniformly convex optimization.
\newblock In Alina Beygelzimer and Daniel Hsu (eds.), \emph{Proceedings of the Thirty-Second Conference on Learning Theory}, volume~99, pp.\  1374--1391. PMLR, 5 2019{\natexlab{a}}.
\newblock URL \url{https://proceedings.mlr.press/v99/gasnikov19a.html}.

\bibitem[Gasnikov et~al.(2019{\natexlab{b}})Gasnikov, Dvurechensky, Gorbunov, Vorontsova, Selikhanovych, Uribe, Jiang, Wang, Zhang, Bubeck, Jiang, Lee, Li, and Sidford]{gasnikov2019near}
Alexander Gasnikov, Pavel Dvurechensky, Eduard Gorbunov, Evgeniya Vorontsova, Daniil Selikhanovych, César~A Uribe, Bo~Jiang, Haoyue Wang, Shuzhong Zhang, Sébastien Bubeck, Qijia Jiang, Yin~Tat Lee, Yuanzhi Li, and Aaron Sidford.
\newblock Near optimal methods for minimizing convex functions with {L}ipschitz p-th derivatives.
\newblock In Alina Beygelzimer and Daniel Hsu (eds.), \emph{Proceedings of the Thirty-Second Conference on Learning Theory}, volume~99, pp.\  1392--1393. PMLR, 5 2019{\natexlab{b}}.
\newblock URL \url{https://proceedings.mlr.press/v99/gasnikov19b.html}.

\bibitem[Ghadimi \& Lan(2013)Ghadimi and Lan]{ghadimi2013optimal}
Saeed Ghadimi and Guanghui Lan.
\newblock Optimal stochastic approximation algorithms for strongly convex stochastic composite optimization, ii: Shrinking procedures and optimal algorithms.
\newblock \emph{SIAM Journal on Optimization}, 23:\penalty0 2061--2089, 2013.
\newblock \doi{10.1137/110848876}.
\newblock URL \url{https://doi.org/10.1137/110848876}.

\bibitem[Ghadimi et~al.(2017)Ghadimi, Liu, and Zhang]{ghadimi2017second}
Saeed Ghadimi, Han Liu, and Tong Zhang.
\newblock Second-order methods with cubic regularization under inexact information.
\newblock \emph{arXiv preprint arXiv:1710.05782}, 2017.

\bibitem[Ghanbari \& Scheinberg(2018)Ghanbari and Scheinberg]{ghanbari2018proximal}
Hiva Ghanbari and Katya Scheinberg.
\newblock Proximal {Q}uasi-{N}ewton methods for regularized convex optimization with linear and accelerated sublinear convergence rates.
\newblock \emph{Computational Optimization and Applications}, 69:\penalty0 597--627, 2018.
\newblock ISSN 1573-2894.
\newblock \doi{10.1007/s10589-017-9964-z}.
\newblock URL \url{https://doi.org/10.1007/s10589-017-9964-z}.

\bibitem[Grapiglia \& Nesterov(2020)Grapiglia and Nesterov]{grapiglia2020tensor}
Geovani~Nunes Grapiglia and Yu~Nesterov.
\newblock Tensor methods for minimizing convex functions with hölder continuous higher-order derivatives.
\newblock \emph{SIAM Journal on Optimization}, 30\penalty0 (4):\penalty0 2750--2779, 2020.

\bibitem[Grapiglia \& Nesterov(2021)Grapiglia and Nesterov]{grapiglia2021inexact}
Geovani~Nunes Grapiglia and Yurii Nesterov.
\newblock On inexact solution of auxiliary problems in tensor methods for convex optimization.
\newblock \emph{Optimization Methods and Software}, 36:\penalty0 145--170, 2021.
\newblock \doi{10.1080/10556788.2020.1731749}.
\newblock URL \url{https://doi.org/10.1080/10556788.2020.1731749}.

\bibitem[Griewank(1981)]{griewank1981modification}
Andreas Griewank.
\newblock The modification of {N}ewton’s method for unconstrained optimization by bounding cubic terms.
\newblock Technical report, Technical report NA/12, 1981.

\bibitem[Hanzely et~al.(2022)Hanzely, Kamzolov, Pasechnyuk, Gasnikov, Richt{\'a}rik, and Tak{\'a}{\v{c}}]{hanzely2022damped}
Slavom\'{\i}r Hanzely, Dmitry Kamzolov, Dmitry Pasechnyuk, Alexander Gasnikov, Peter Richt{\'a}rik, and Martin Tak{\'a}{\v{c}}.
\newblock A damped {N}ewton method achieves global $\mathcal{O}\left (\frac{1}{k^{2}}\right) $ and local quadratic convergence rate.
\newblock In S.~Koyejo, S.~Mohamed, A.~Agarwal, D.~Belgrave, K.~Cho, and A.~Oh (eds.), \emph{Advances in Neural Information Processing Systems}, volume~35, pp.\  25320--25334. Curran Associates, Inc., 2022.
\newblock URL \url{https://proceedings.neurips.cc/paper_files/paper/2022/file/a1f0c0cd6caaa4863af5f12608edf63e-Paper-Conference.pdf}.

\bibitem[Hoffmann \& Kornstaedt(1978)Hoffmann and Kornstaedt]{hoffmann1978higher}
K~H Hoffmann and H~J Kornstaedt.
\newblock Higher-order necessary conditions in abstract mathematical programming.
\newblock \emph{Journal of Optimization Theory and Applications}, 26:\penalty0 533--568, 1978.
\newblock ISSN 1573-2878.
\newblock \doi{10.1007/BF00933151}.
\newblock URL \url{https://doi.org/10.1007/BF00933151}.

\bibitem[Jiang \& Mokhtari(2023)Jiang and Mokhtari]{jiang2023accelerated}
Ruichen Jiang and Aryan Mokhtari.
\newblock Accelerated {Q}uasi-{N}ewton proximal extragradient: Faster rate for smooth convex optimization.
\newblock \emph{arXiv preprint arXiv:2306.02212}, 2023.

\bibitem[Jiang et~al.(2023)Jiang, Jin, and Mokhtari]{jiang2023online}
Ruichen Jiang, Qiujiang Jin, and Aryan Mokhtari.
\newblock Online learning guided curvature approximation: {A} quasi-newton method with global non-asymptotic superlinear convergence.
\newblock \emph{CoRR}, abs/2302.08580, 2023.
\newblock \doi{10.48550/arXiv.2302.08580}.
\newblock URL \url{https://doi.org/10.48550/arXiv.2302.08580}.

\bibitem[Kamzolov(2020)]{kamzolov2020near}
Dmitry Kamzolov.
\newblock Near-optimal hyperfast second-order method for convex optimization.
\newblock In Yury Kochetov, Igor Bykadorov, and Tatiana Gruzdeva (eds.), \emph{Mathematical Optimization Theory and Operations Research}, pp.\  167--178. Springer International Publishing, 2020.
\newblock ISBN 978-3-030-58657-7.

\bibitem[Kamzolov et~al.(2022)Kamzolov, Gasnikov, Dvurechensky, Agafonov, and Tak{\'a}{\v{c}}]{kamzolov2022exploiting}
Dmitry Kamzolov, Alexander Gasnikov, Pavel Dvurechensky, Artem Agafonov, and Martin Tak{\'a}{\v{c}}.
\newblock Exploiting higher-order derivatives in convex optimization methods.
\newblock \emph{arXiv preprint arXiv:2208.13190}, 2022.

\bibitem[Kamzolov et~al.(2023)Kamzolov, Ziu, Agafonov, and Tak{\'a}{\v{c}}]{kamzolov2023accelerated}
Dmitry Kamzolov, Klea Ziu, Artem Agafonov, and Martin Tak{\'a}{\v{c}}.
\newblock Accelerated adaptive cubic regularized {Q}uasi-{N}ewton methods.
\newblock \emph{arXiv preprint arXiv:2302.04987}, 2023.

\bibitem[Kantorovich(1949)]{kantorovich1949newton}
Leonid~Vitalyevich Kantorovich.
\newblock On {N}ewton's method.
\newblock \emph{Trudy Matematicheskogo Instituta imeni VA Steklova}, 28:\penalty0 104--144, 1949.
\newblock (In Russian).

\bibitem[Kohler \& Lucchi(2017)Kohler and Lucchi]{kohler2017sub}
Jonas~Moritz Kohler and Aurelien Lucchi.
\newblock Sub-sampled cubic regularization for non-convex optimization.
\newblock In Doina Precup and Yee~Whye Teh (eds.), \emph{Proceedings of the 34th International Conference on Machine Learning}, volume~70, pp.\  1895--1904. PMLR, 5 2017.
\newblock URL \url{https://proceedings.mlr.press/v70/kohler17a.html}.

\bibitem[Kovalev \& Gasnikov(2022)Kovalev and Gasnikov]{kovalev2022first}
Dmitry Kovalev and Alexander Gasnikov.
\newblock The first optimal acceleration of high-order methods in smooth convex optimization.
\newblock In S~Koyejo, S~Mohamed, A~Agarwal, D~Belgrave, K~Cho, and A~Oh (eds.), \emph{Advances in Neural Information Processing Systems}, volume~35, pp.\  35339--35351. Curran Associates, Inc., 2022.
\newblock URL \url{https://proceedings.neurips.cc/paper_files/paper/2022/file/e56f394bbd4f0ec81393d767caa5a31b-Paper-Conference.pdf}.

\bibitem[Lan(2012)]{lan2012optimal}
Guanghui Lan.
\newblock An optimal method for stochastic composite optimization.
\newblock \emph{Mathematical Programming}, 133:\penalty0 365--397, 2012.
\newblock ISSN 1436-4646.
\newblock \doi{10.1007/s10107-010-0434-y}.
\newblock URL \url{https://doi.org/10.1007/s10107-010-0434-y}.

\bibitem[Lucchi \& Kohler(2023)Lucchi and Kohler]{lucchi2022sub}
Aurelien Lucchi and Jonas Kohler.
\newblock A sub-sampled tensor method for nonconvex optimization.
\newblock \emph{IMA Journal of Numerical Analysis}, 43:\penalty0 2856--2891, 10 2023.
\newblock ISSN 0272-4979.
\newblock \doi{10.1093/imanum/drac057}.
\newblock URL \url{https://doi.org/10.1093/imanum/drac057}.

\bibitem[Mishchenko(2023)]{mishchenko2023regularized}
Konstantin Mishchenko.
\newblock Regularized {N}ewton method with global $\mathcal{O}\left (\frac{1}{k^{2}}\right)$ convergence.
\newblock \emph{SIAM Journal on Optimization}, 33:\penalty0 1440--1462, 2023.
\newblock \doi{10.1137/22M1488752}.
\newblock URL \url{https://doi.org/10.1137/22M1488752}.

\bibitem[Monteiro \& Svaiter(2013)Monteiro and Svaiter]{monteiro2013accelerated}
Renato D~C Monteiro and B~F Svaiter.
\newblock An accelerated hybrid proximal extragradient method for convex optimization and its implications to second-order methods.
\newblock \emph{SIAM Journal on Optimization}, 23:\penalty0 1092--1125, 2013.
\newblock \doi{10.1137/110833786}.
\newblock URL \url{https://doi.org/10.1137/110833786}.

\bibitem[Mor{\'e}(1977)]{more1977levenberg}
Jorge~J. Mor{\'e}.
\newblock The levenberg--marquardt algorithm: implementation and theory.
\newblock In \emph{Conference on Numerical Analysis}, University of Dundee, Scotland, 7 1977.
\newblock URL \url{https://www.osti.gov/biblio/7256021}.

\bibitem[Nemirovski et~al.(2009)Nemirovski, Juditsky, Lan, and Shapiro]{nemirovski2009robust}
A~Nemirovski, A~Juditsky, G~Lan, and A~Shapiro.
\newblock Robust stochastic approximation approach to stochastic programming.
\newblock \emph{SIAM Journal on Optimization}, 19:\penalty0 1574--1609, 2009.
\newblock \doi{10.1137/070704277}.
\newblock URL \url{https://doi.org/10.1137/070704277}.

\bibitem[Nemirovski \& Yudin(1983)Nemirovski and Yudin]{nemirovski1983problem}
Arkadi~Semenovich Nemirovski and David~Borisovich Yudin.
\newblock \emph{Problem Complexity and Method Efficiency in Optimization}.
\newblock A Wiley-Interscience publication. Wiley, 1983.

\bibitem[Nesterov(2008)]{nesterov2008accelerating}
Yurii Nesterov.
\newblock Accelerating the cubic regularization of {N}ewton’s method on convex problems.
\newblock \emph{Mathematical Programming}, 112:\penalty0 159--181, 2008.
\newblock ISSN 1436-4646.
\newblock \doi{10.1007/s10107-006-0089-x}.
\newblock URL \url{https://doi.org/10.1007/s10107-006-0089-x}.

\bibitem[Nesterov(2018)]{nesterov2018lectures}
Yurii Nesterov.
\newblock \emph{Lectures on Convex Optimization}.
\newblock Springer Cham, 2 edition, 2018.
\newblock ISBN 978-3-319-91577-7.
\newblock \doi{10.1007/978-3-319-91578-4}.

\bibitem[Nesterov(2021{\natexlab{a}})]{nesterov2021auxiliary}
Yurii Nesterov.
\newblock Inexact high-order proximal-point methods with auxiliary search procedure.
\newblock \emph{SIAM Journal on Optimization}, 31:\penalty0 2807--2828, 2021{\natexlab{a}}.
\newblock \doi{10.1137/20M134705X}.
\newblock URL \url{https://doi.org/10.1137/20M134705X}.

\bibitem[Nesterov(2021{\natexlab{b}})]{nesterov2021implementable}
Yurii Nesterov.
\newblock Implementable tensor methods in unconstrained convex optimization.
\newblock \emph{Mathematical Programming}, 186:\penalty0 157--183, 2021{\natexlab{b}}.
\newblock ISSN 1436-4646.
\newblock \doi{10.1007/s10107-019-01449-1}.
\newblock URL \url{https://doi.org/10.1007/s10107-019-01449-1}.

\bibitem[Nesterov(2021{\natexlab{c}})]{nesterov2021superfast}
Yurii Nesterov.
\newblock Superfast second-order methods for unconstrained convex optimization.
\newblock \emph{Journal of Optimization Theory and Applications}, 191:\penalty0 1--30, 2021{\natexlab{c}}.
\newblock ISSN 1573-2878.
\newblock \doi{10.1007/s10957-021-01930-y}.
\newblock URL \url{https://doi.org/10.1007/s10957-021-01930-y}.

\bibitem[Nesterov \& Polyak(2006)Nesterov and Polyak]{nesterov2006cubic}
Yurii Nesterov and Boris~T Polyak.
\newblock Cubic regularization of {N}ewton method and its global performance.
\newblock \emph{Mathematical Programming}, 108:\penalty0 177--205, 2006.
\newblock \doi{10.1007/s10107-006-0706-8}.
\newblock URL \url{https://doi.org/10.1007/s10107-006-0706-8}.

\bibitem[Newton(1687)]{newton1687philosophiae}
Isaac Newton.
\newblock \emph{Philosophiae naturalis principia mathematica}.
\newblock Edmond Halley, 1687.

\bibitem[Polyak \& Juditsky(1992)Polyak and Juditsky]{polyak1992acceleration}
Boris~T Polyak and Anatoli~B Juditsky.
\newblock Acceleration of stochastic approximation by averaging.
\newblock \emph{SIAM Journal on Control and Optimization}, 30:\penalty0 838--855, 1992.
\newblock \doi{10.1137/0330046}.
\newblock URL \url{https://doi.org/10.1137/0330046}.

\bibitem[Polyak(1990)]{polyak1990new}
Boris~Teodorovich Polyak.
\newblock A new method of stochastic approximation type.
\newblock \emph{Avtomatika i Telemekhanika}, 51:\penalty0 98--107, 1990.

\bibitem[Polyak(2017)]{polyak2017complexity}
Roman Polyak.
\newblock Complexity of the regularized {N}ewton method.
\newblock \emph{arXiv preprint arXiv:1706.08483}, 2017.

\bibitem[Polyak(2009)]{polyak2009regularized}
Roman~A Polyak.
\newblock Regularized {N}ewton method for unconstrained convex optimization.
\newblock \emph{Mathematical Programming}, 120:\penalty0 125--145, 2009.
\newblock ISSN 1436-4646.
\newblock \doi{10.1007/s10107-007-0143-3}.
\newblock URL \url{https://doi.org/10.1007/s10107-007-0143-3}.

\bibitem[Raphson(1697)]{raphson1697analysis}
Joseph Raphson.
\newblock \emph{Analysis Aequationum Universalis Seu Ad Aequationes Algebraicas Resolvendas Methodus Generalis \& Expedita, Ex Nova Infinitarum Serierum Methodo, Deducta Ac Demonstrata}.
\newblock Th. Braddyll, 1697.

\bibitem[Robbins \& Monro(1951)Robbins and Monro]{robbins1951stochastic}
Herbert Robbins and Sutton Monro.
\newblock A stochastic approximation method.
\newblock \emph{The Annals of Mathematical Statistics}, 22:\penalty0 400--407, 1951.
\newblock ISSN 0003-4851.
\newblock URL \url{http://www.jstor.org/stable/2236626}.

\bibitem[Scheinberg \& Tang(2016)Scheinberg and Tang]{scheinberg2016practical}
Katya Scheinberg and Xiaocheng Tang.
\newblock Practical inexact proximal {Q}uasi-{N}ewton method with global complexity analysis.
\newblock \emph{Mathematical Programming}, 160:\penalty0 495--529, 2016.
\newblock ISSN 1436-4646.
\newblock \doi{10.1007/s10107-016-0997-3}.
\newblock URL \url{https://doi.org/10.1007/s10107-016-0997-3}.

\bibitem[Scieur(2023)]{scieur2023adaptive}
Damien Scieur.
\newblock Adaptive {Q}uasi-{N}ewton and anderson acceleration framework with explicit global (accelerated) convergence rates.
\newblock \emph{arXiv preprint arXiv:2305.19179}, 2023.

\bibitem[Simpson(1740)]{simpson1740essays}
Thomas Simpson.
\newblock \emph{Essays on several curious and useful subjects, in speculative and mix'd mathematicks. Illustrated by a variety of examples.}
\newblock H. Woodfall, 1740.

\bibitem[Tripuraneni et~al.(2018)Tripuraneni, Stern, Jin, Regier, and Jordan]{tripuraneni2018stochastic}
Nilesh Tripuraneni, Mitchell Stern, Chi Jin, Jeffrey Regier, and Michael~I Jordan.
\newblock Stochastic cubic regularization for fast nonconvex optimization.
\newblock In S~Bengio, H~Wallach, H~Larochelle, K~Grauman, N~Cesa-Bianchi, and R~Garnett (eds.), \emph{Advances in Neural Information Processing Systems}, volume~31. Curran Associates, Inc., 2018.
\newblock URL \url{https://proceedings.neurips.cc/paper_files/paper/2018/file/db1915052d15f7815c8b88e879465a1e-Paper.pdf}.

\bibitem[Woodworth \& Srebro(2017)Woodworth and Srebro]{woodworth2017lower}
Blake Woodworth and Nathan Srebro.
\newblock Lower bound for randomized first order convex optimization.
\newblock \emph{arXiv preprint arXiv:1709.03594}, 2017.

\bibitem[Xu et~al.(2020)Xu, Roosta, and Mahoney]{xu2020newton}
Peng Xu, Fred Roosta, and Michael~W Mahoney.
\newblock Newton-type methods for non-convex optimization under inexact {H}essian information.
\newblock \emph{Mathematical Programming}, 184:\penalty0 35--70, 2020.
\newblock ISSN 1436-4646.
\newblock \doi{10.1007/s10107-019-01405-z}.
\newblock URL \url{https://doi.org/10.1007/s10107-019-01405-z}.

\bibitem[Zhang \& Lin(2015)Zhang and Lin]{zhang2015disco}
Yuchen Zhang and Xiao Lin.
\newblock Disco: Distributed optimization for self-concordant empirical loss.
\newblock In Francis Bach and David Blei (eds.), \emph{Proceedings of the 32nd International Conference on Machine Learning}, volume~37 of \emph{Proceedings of Machine Learning Research}, pp.\  362--370, Lille, France, 07--09 Jul 2015. PMLR.
\newblock URL \url{https://proceedings.mlr.press/v37/zhangb15.html}.

\end{thebibliography}

\newpage
\appendix
\onecolumn
\section{Related work}
\label{app:rel_works}
The idea of using high-order derivatives in optimization has been known for a long time \cite{hoffmann1978higher}. In 2009, M.~Baes extended the cubic regularization approach with second-order derivatives ($p=2$)  from \cite{nesterov2006cubic} to high-order derivatives ($p>2$) in \cite{baes2009estimate}. However, the subproblem of these methods was non-convex, making them impractical. In 2018, Yu.~Nesterov proposed the implementable (Accelerated) Tensor Method \cite{nesterov2021implementable}, wherein the convexity of the subproblem was reached by increasing a regularization parameter. Hence, the convex subproblem could be efficiently solved by appropriate subsolvers, making the algorithm practically applicable. In the same work, a lower complexity bound for tensor methods under higher-order smoothness assumption was proposed. Shortly after, near-optimal~\cite{gasnikov2019near,gasnikov2019optimal,bubeck2019near} and optimal~\cite{kovalev2022first,carmon2022optimal} high-order methods were introduced. Furthermore, under higher smoothness assumptions, \emph{second-order} methods~\cite{nesterov2021superfast, nesterov2021auxiliary, kamzolov2020near,doikov2024super} can surpass the corresponding lower complexity bound for functions with Lipschitz-continuous Hessians. For more comprehensive information on high-order methods, one can refer to the review~\cite{kamzolov2022exploiting}.\\[2pt]
In general, second and higher-order methods are known for their faster convergence compared to first-order methods. However, their computational cost per iteration is significantly higher due to the computation of high-order derivatives. To alleviate this computational burden, it is common to employ approximations of derivatives instead of exact values. While there is a wide range of second-order and tensor methods available for the non-convex case, assuming stochastic or inexact derivatives~\cite{cartis2011adaptive,cartis2011adaptive2, cartis2018global, kohler2017sub, xu2020newton, tripuraneni2018stochastic,lucchi2022sub,bellavia2022stochastic, bellavia2022adaptive,doikov2023second}, the same cannot be said for the convex case. In the context of convex problems, there have been studies on high-order methods such as second-order methods with inexact Hessian information~\cite{ghadimi2017second}, tensor methods with inexact and stochastic derivatives~\cite{agafonov2023inexact}, and Extra-Newton algorithm with stochastic gradients and Hessians~\cite{antonakopoulos2022extra}. As a possible application of methods with inexact Hessians, we highlight Quasi-Newton(QN) methods. Such methods approximate second-order derivatives using the history of gradient feedback. Quasi-Newton methods are known for their impressive practical performance and local superlinear convergence. However, for the long period of time, the main drawback of such methods was a slow theoretical global convergence, slower than gradient descent. First steps to improve the global convergence of such methods were done in ~\citep{scheinberg2016practical,ghanbari2018proximal} but the methods could be still slower than gradient descent. The first global Quasi-Newton methods that provably matches the gradient descent were reached by cubic regularization in two consecutive papers ~\cite{kamzolov2023accelerated,jiang2023online}. It also opened a possibility for accelerated QN \cite{kamzolov2023accelerated} that theoretically matches  fast gradient method and first-order lower-bounds. This direction were further explored for different methods in \cite{scieur2023adaptive,jiang2023accelerated}.
 Another possible application for high-order methods with inexact or stochastic derivatives is distributed optimization~\cite{zhang2015disco,daneshmand2021newton, agafonov2021accelerated, dvurechensky2022hyperfast}.\\[2pt]
One of the main challenges of tensor and regularized second-order methods is solving the auxiliary subproblem to compute the iterate update. In both second-order and higher-order cases, it usually requires running a subsolver algorithm. The impact of the accuracy, up to which we solve the auxiliary problem, on the convergence of the algorithm has been studied in several works~\cite{grapiglia2021inexact,grapiglia2020tensor,doikov2020inexact}. One actively developing direction relies on the constructions of CRN with explicit step~\cite{polyak2009regularized, polyak2017complexity, mishchenko2023regularized, doikov2023gradient,doikov2024super,hanzely2022damped}. 

\section{\aaa{On the intuition behind the algorithm}}
\label{app:intuition}

\textbf{Model.} For the second-order case the model $\omega_{x, M}^{\bar{\delta}}(y)$ comprises three key components: an inexact Taylor approximation $\phi_x(y)$; cubic regularization $\frac{M}{6}\|x-y\|^3$ and additional quadratic regularization $\frac{\bar{\delta}}{2}\|x-y\|^2$.
The combination of Taylor polynomial and cubic regularization is the standard model for exact second-order methods, as they create a model that is both convex and upper bounds the objective \citep{nesterov2008accelerating} (see \citep{nesterov2021implementable} for high-order optimization).
However, inserting inexactness to the Taylor approximation leads to the necessity of additional regularization \citep{agafonov2023inexact}. 

The first reason to add quadratic regularization is to ensure that the Hessian of the function is majorized by the Hessian of the model:
$$0 \preceq \nabla^2 f(y) \preceq \nabla^2 \phi_x(y) + \delta_2 I + L_2||y - x|| \preceq \nabla^2 \phi_x(y) + \bar{\delta}_2 I + M||y - x|| = \nabla^2 \omega_{x}^{\bar{\delta}}.$$
Moreover, this regularization is essential for handling stochastic gradients correctly. Note, that we add quadratic regularizer with the constant $\bar{\delta}_t = 2\delta_2 + \frac{\sigma_1}{R}(t+1)^{3/2}$. Here, $\delta_2$ accounts for a Hessian majorization, while $\frac{\sigma_1}{R}(t+1)^{3/2}$ is crucial for achieving optimal convergence in gradient inexactness. From our perspective, this regularization can be viewed as a damping for the size of stochastic Cubic Newton step, as stochastic gradients may lead to undesirable directions.  

For further clarification, please refer to Lemma~\ref{lm:scalar_lb_cases}. This lemma serves as a bound on the progress of the step. Take a look at the right-hand side of equation \eqref{eq:scalar_lb_cases}. Without proper quadratic regularization, we won't capture the correct term related to Hessian inexactness $\delta_2$. Consequently, the desired convergence term, $\frac{\delta_2 R^2}{T^2}$, cannot be achieved. Moving on to the left-hand side of \eqref{eq:scalar_lb_cases}, we encounter the term $\frac{2}{\bar{\delta}}||g(x) - \nabla f(x)||$. Here, choosing the appropriate $\bar{\delta}$ is crucial to compensate for stochastic gradient errors and achieve optimal convergence in the gradient inexactness term.

\textbf{Estimating sequences.} Estimating sequences are a standard optimization technique to achieve acceleration \citep{nesterov2018lectures}[Section 2.2.1]. As far as our knowledge extends, the application of estimating sequences to second-order methods was first introduced in \citep{nesterov2008accelerating}. The concept involves adapting acceleration techniques traditionally applied to first-order methods to the realm of second-order methods. In this work, we make slight modifications to the estimating sequences derived from \citep{nesterov2008accelerating} to preserve the customary relationships inherent in accelerated methods:
\begin{gather*}
    \frac{f(x_t)}{A_{t-1}} - err_{low} \leq \psi_t^* \leq \frac{f(x^*)}{A_{t-1}} + c_2 \|x^* - x_0\|^2 + c_3 \|x^* - x_0\|^3 + err_{up},
\end{gather*}
where $A_{t}$ and $\alpha_t$ are scaling factors, common for acceleration \citep{nesterov2018lectures, nesterov2008accelerating}.

For simplicity, let $err_{low} = 0, ~ err_{up} = 0, ~ c_2 = 0$. That is the case for exact derivatives and subproblem solutions. Then, one can get the convergence, with a specific choice of scaling factors:
$$f(x_t) - f(x^*) \leq A_{t-1} c_3 \|x^* - x_0\|^3.$$
In our case, errors and $c_2$ are non zero and stay for gradient, Hessian and subproblem inexactness. By applying estimating sequence technique we get rates \eqref{eq:acc_convergence_stoch},~\eqref{eq:acc_convergence_stoch_grad_inexact_hess}.

\textbf{The choice of parameters.} The cubic regularization parameter $M \geq 2 L_2$ represents the standard choice for second-order methods \citep{nesterov2006cubic}.
The quadratic regularization parameter  $\bar{\delta}_t = 2\delta_2 + \frac{\sigma_1 + \tau}{R}(t+1)^{3/2}$ consists of  $2 \delta_2$ for compensating Hessian errors, and $\frac{\sigma_1 + \tau}{R}(t+1)^{3/2}$ for compensating stochastic gradient and subproblem solution errors.
The regularization parameters $\bar{\kappa}_2^t, \bar{\kappa}_3^t, \lambda_t$ are utilized for the second step of the method. The $\bk$’s are chosen in~\eqref{eq:bar_kappa},~\eqref{eq:bar_kappa_3} to uphold the inequality~\eqref{eq:lemma_ass} for acceleration. $\lambda_t = \frac{\sigma}{R}(t+1)^{5/2} $ serves to compensate for stochastic gradient errors in the estimation functions  $\psi_t$. The specific choice for $\bar{\delta}$ and $\lambda$ is made in the proof of Theorem~\ref{thm:2_ord_app} to achieve optimal convergence rate.

\section{Additional Experiments}
\label{app:exp}
After tuning we got the following hyperparameters. For deterministic oracles: $lr=20$ for GD, $\gamma=5$ and $\beta_0 = 0.5$ for Extra-Newton, $M=0.01$  for ASCN.  For stochastic oracles: $lr=20$ for SGD, $\gamma=5$ and $\beta_0 = 0.05$ for Extra-Newton, $M=0.01$ and $\sigma=1e-7$ for ASCN. For stochastic oracles with different batch sizes: $lr=20$ for SGD, $\gamma=5$ and $\beta_0 = 1.0$ for Extra-Newton, $M=0.01$ and $\sigma=1e-7$ for ASCN.

On Figures~\ref{fig:app_stoch_450},~\ref{fig:app_stoch_900} we present additional experiments with different batch sizes for gradients and Hessians, and \aaa{on Figures~\ref{fig:app_stochastic_increased},~\ref{fig:app_stochastic_increased_150} we present Figure~\ref{fig:stochastic} with increased size. Moving forward to Figures~\ref{fig:app_stochastic_150_time} and~\ref{fig:app_stochastic_450_time}, a comparison in running time is illustrated for two distinct setups: the gradient batch size is set at $10000$, and Hessian batch sizes are configured to be $150$ and $450$. Specifically, one iteration of SGD consumes $0.16$ seconds, while the execution times for EN and ASCN are approximately $0.33$ seconds for both scenarios. }

\begin{figure}[h]
\centering
     \begin{subfigure}{0.48\textwidth}
         \centering         \includegraphics[width=\textwidth]{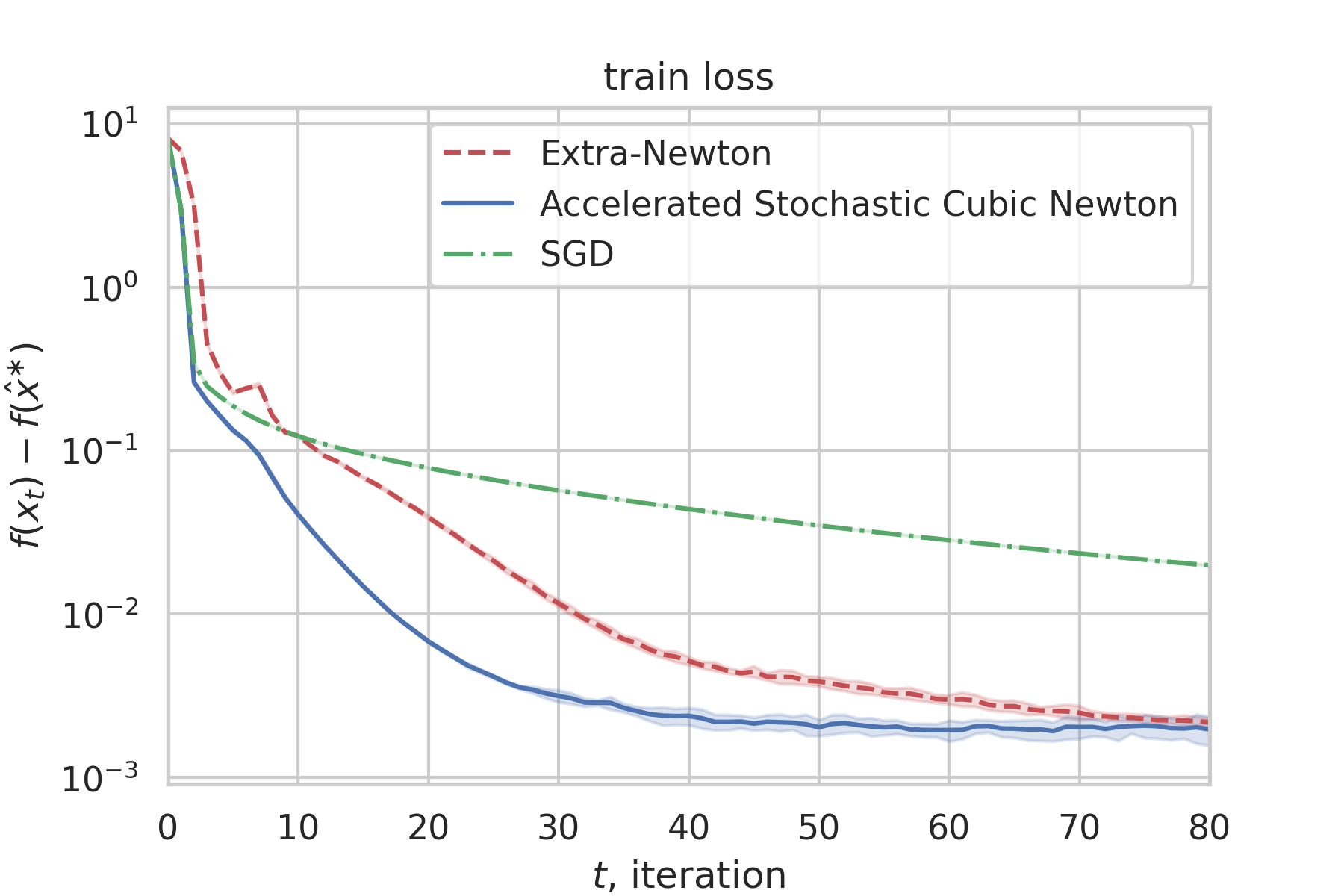}\vspace{-2mm}
         \caption{Train loss}
     \end{subfigure}
     \hfill
     \begin{subfigure}{0.48\textwidth}
         \centering
         \includegraphics[width=\textwidth]{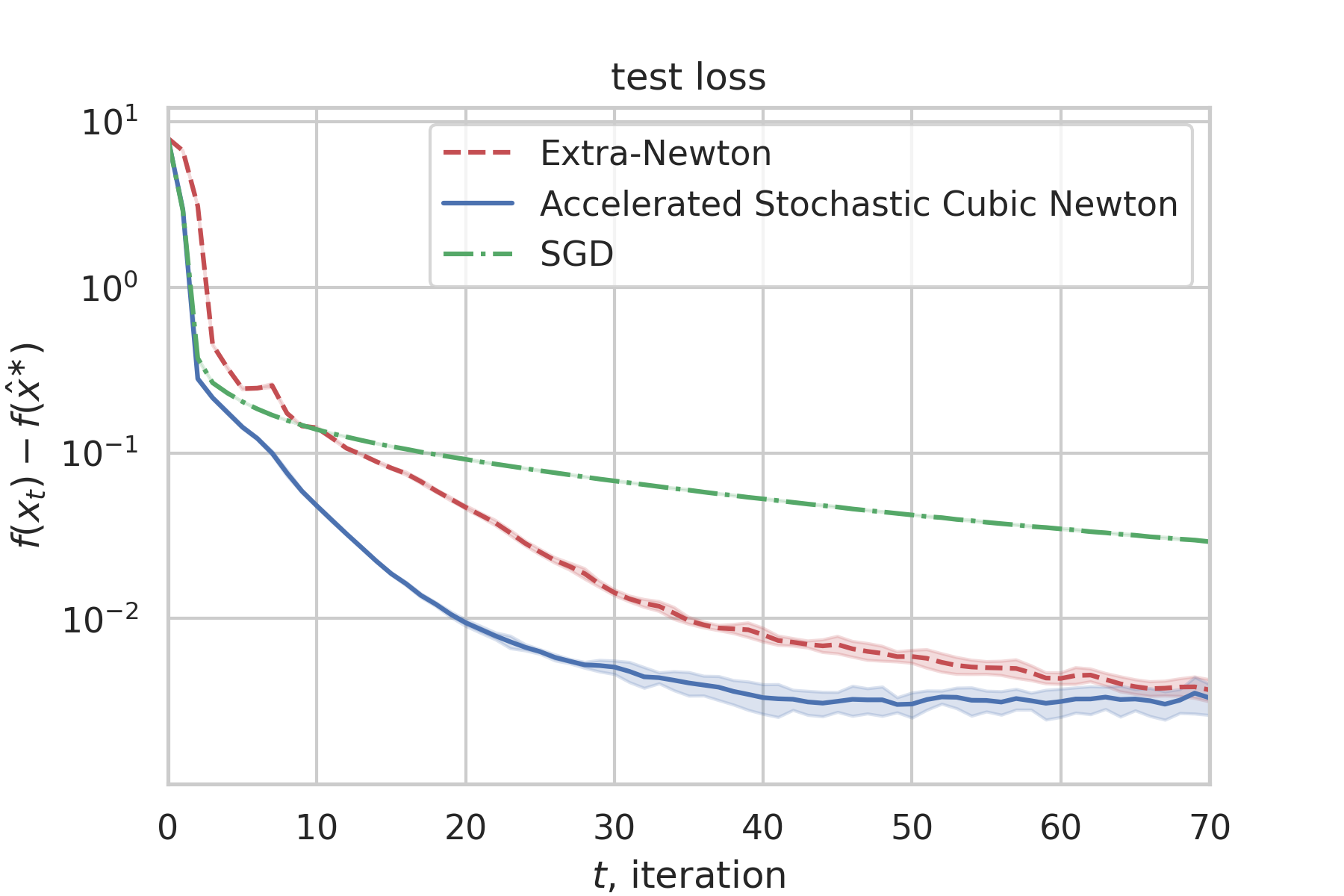}\vspace{-2mm}
         \caption{Test loss}
     \end{subfigure}
     \caption{Logistic regression on \texttt{a9a}. Gradient   batch size is $10000$, Hessian batch size is $450$}
     \label{fig:app_stoch_450}
\end{figure}

\begin{figure}[ht!]
\centering
     \begin{subfigure}{0.48\textwidth}
         \centering         \includegraphics[width=\textwidth]{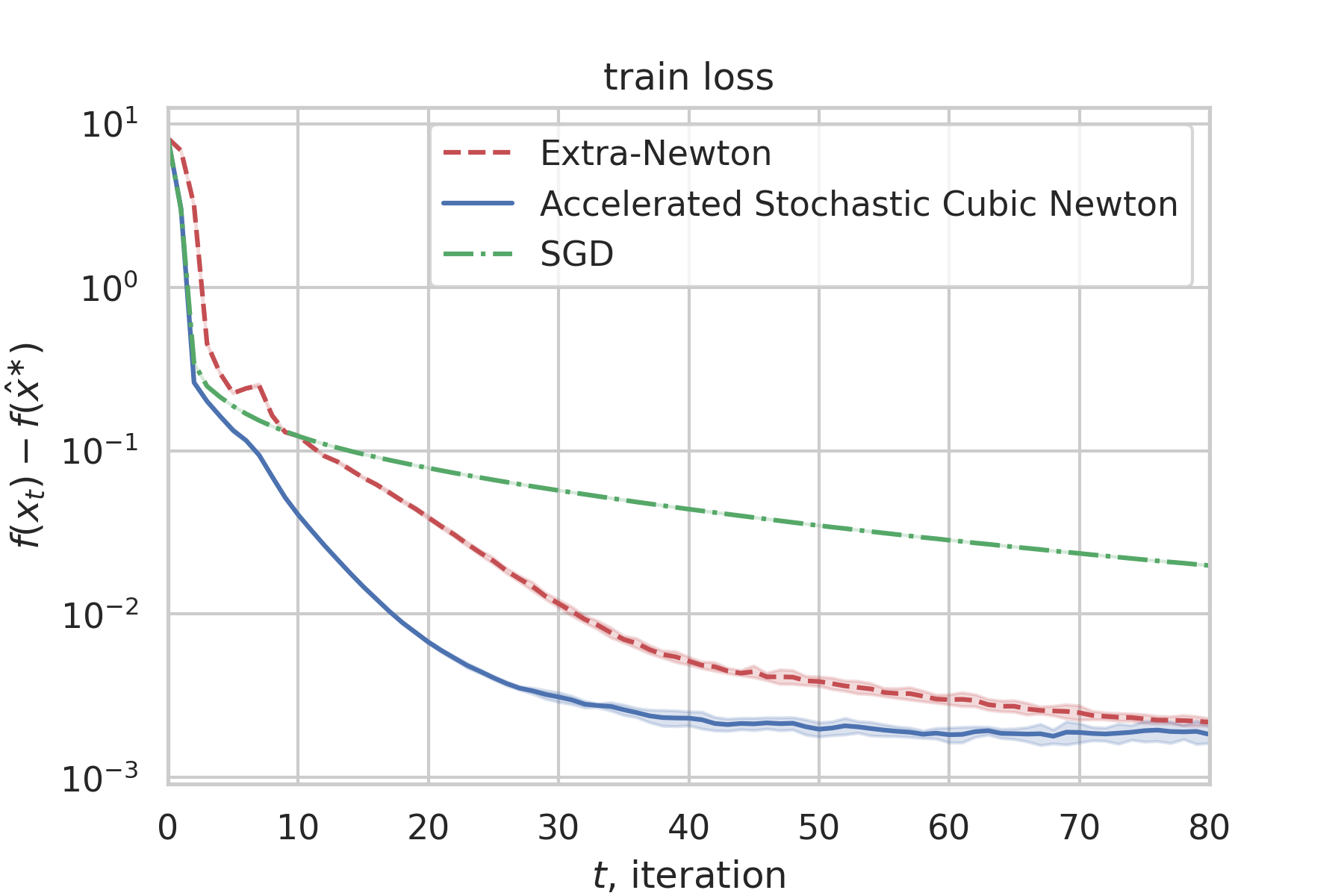}\vspace{-2mm}
         \caption{Train loss}
         \label{subfig:stoch_train_900}
     \end{subfigure}
     \hfill
     \begin{subfigure}{0.48\textwidth}
         \centering
         \includegraphics[width=\textwidth]{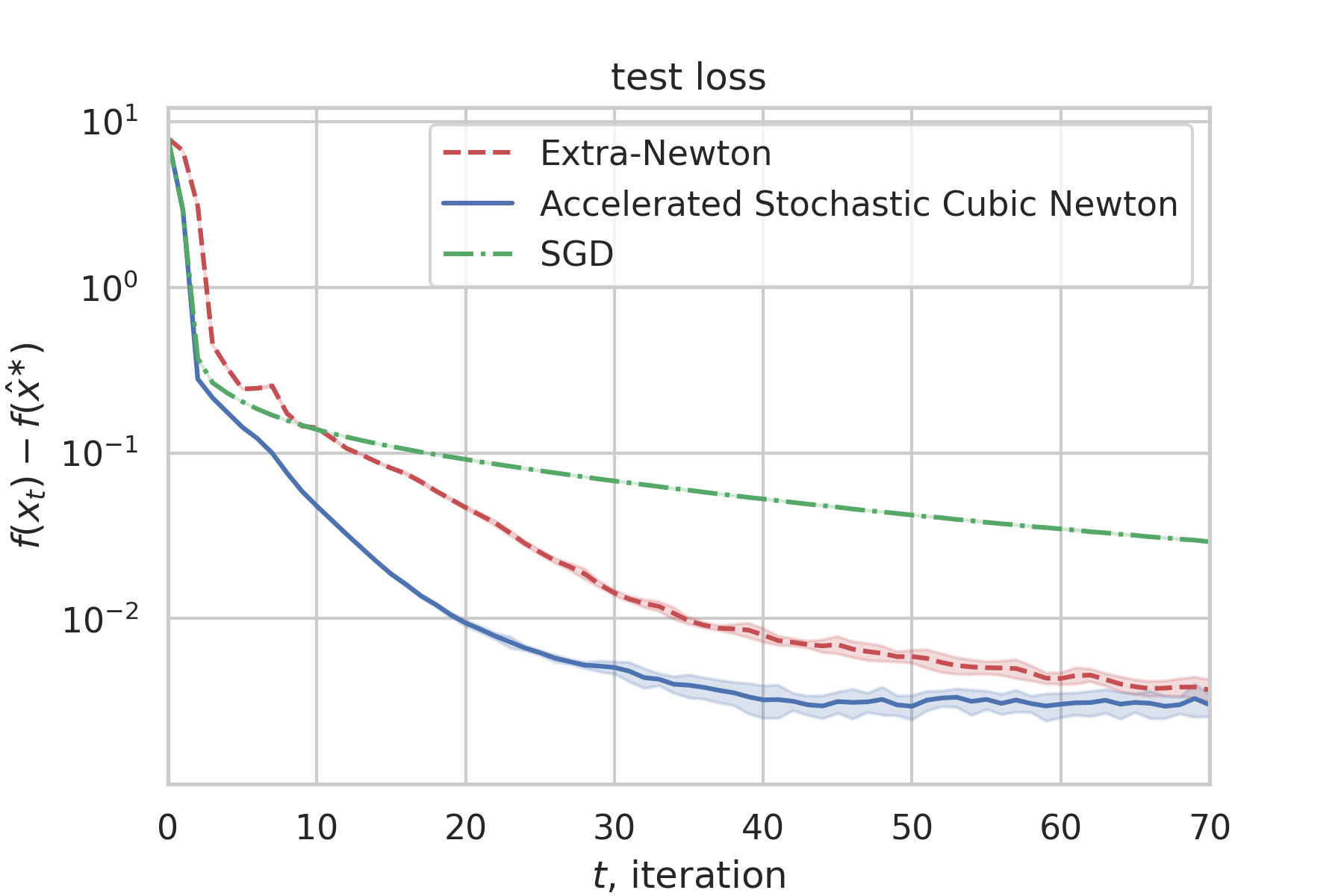}\vspace{-2mm}
         \caption{Test loss}
         \label{subfig:stoch_test_900}
     \end{subfigure}
     \caption{Logistic regression on \texttt{a9a}. Gradient   batch size is $10000$, Hessian batch size is $900$}
     \label{fig:app_stoch_900}
\end{figure}

\begin{figure}[h]
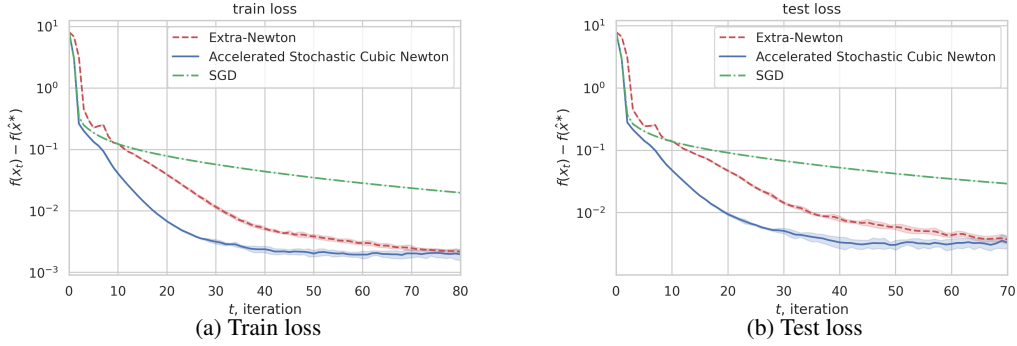

\centering
     \begin{subfigure}{0.48\textwidth}
         \centering         \includegraphics[width=\textwidth]{experiments/a9a_hess_train_bsh=450.png}\vspace{-2mm}
         \caption{Train loss}
     \end{subfigure}
     \hfill
     \begin{subfigure}{0.48\textwidth}
         \centering
         \includegraphics[width=\textwidth]{experiments/a9a_hess_test_bsh=450.png}\vspace{-2mm}
         \caption{Test loss}
     \end{subfigure}
     \caption{Logistic regression on \texttt{a9a}. Gradient and Hessian batch sizes are  $1500$}
     \label{fig:app_stochastic_increased}
\end{figure}

\begin{figure}[ht!]
\centering
     \begin{subfigure}{0.48\textwidth}
         \centering         \includegraphics[width=\textwidth]{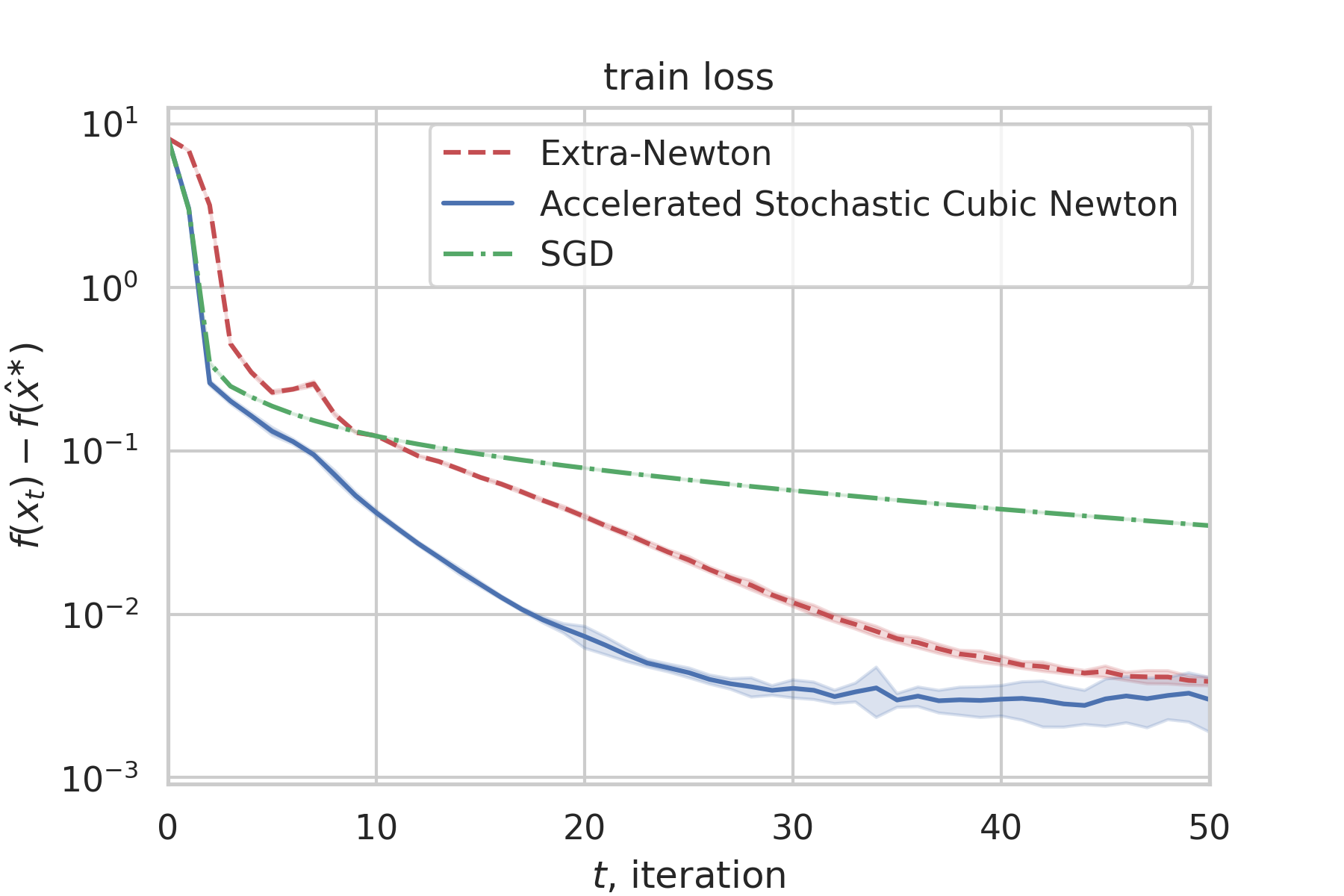}\vspace{-2mm}
         \caption{Train loss}
         \label{subfig:stoch_train_150}
     \end{subfigure}
     \hfill
     \begin{subfigure}{0.48\textwidth}
         \centering
         \includegraphics[width=\textwidth]{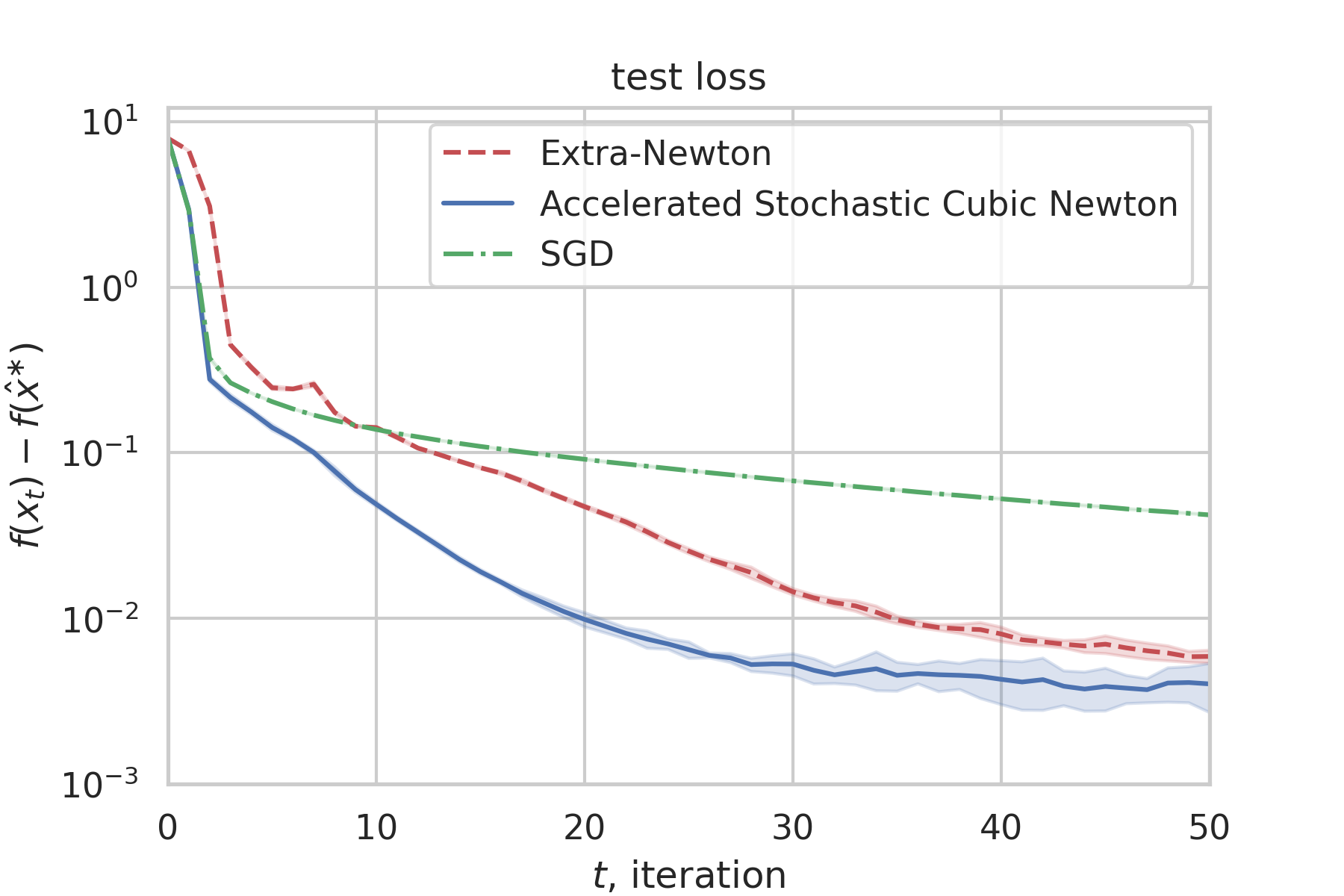}\vspace{-2mm}
         \caption{Test loss}
         \label{subfig:stoch_test_150}
     \end{subfigure}
     \caption{Logistic regression on \texttt{a9a}. Gradient   batch size is $10000$, Hessian batch size is $150$}
     \label{fig:app_stochastic_increased_150}
\end{figure}

\begin{figure}[h]
\centering
     \begin{subfigure}{0.48\textwidth}
         \centering         \includegraphics[width=\textwidth]{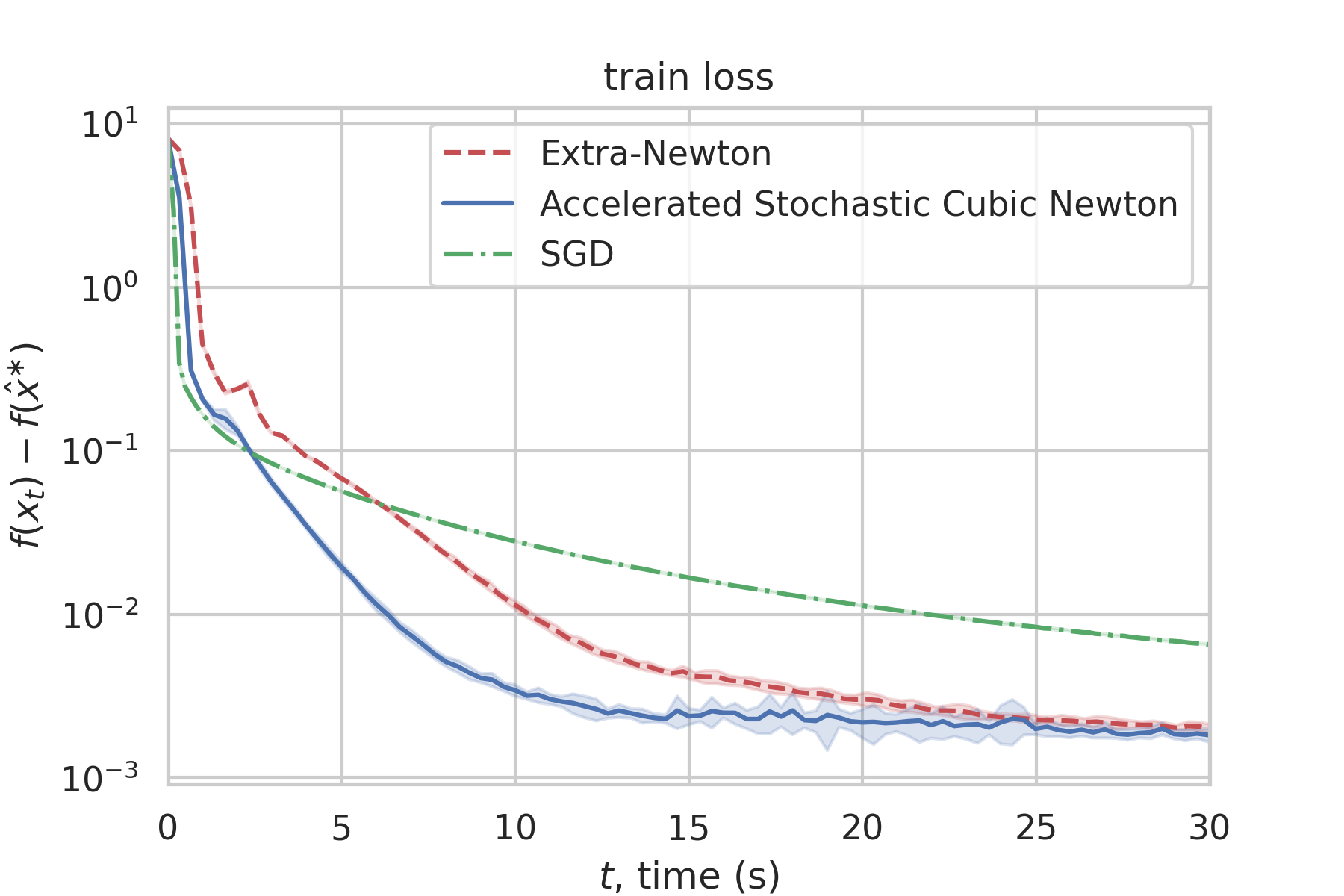}\vspace{-2mm}
         \caption{Train loss}
     \end{subfigure}
     \hfill
     \begin{subfigure}{0.48\textwidth}
         \centering
         \includegraphics[width=\textwidth]{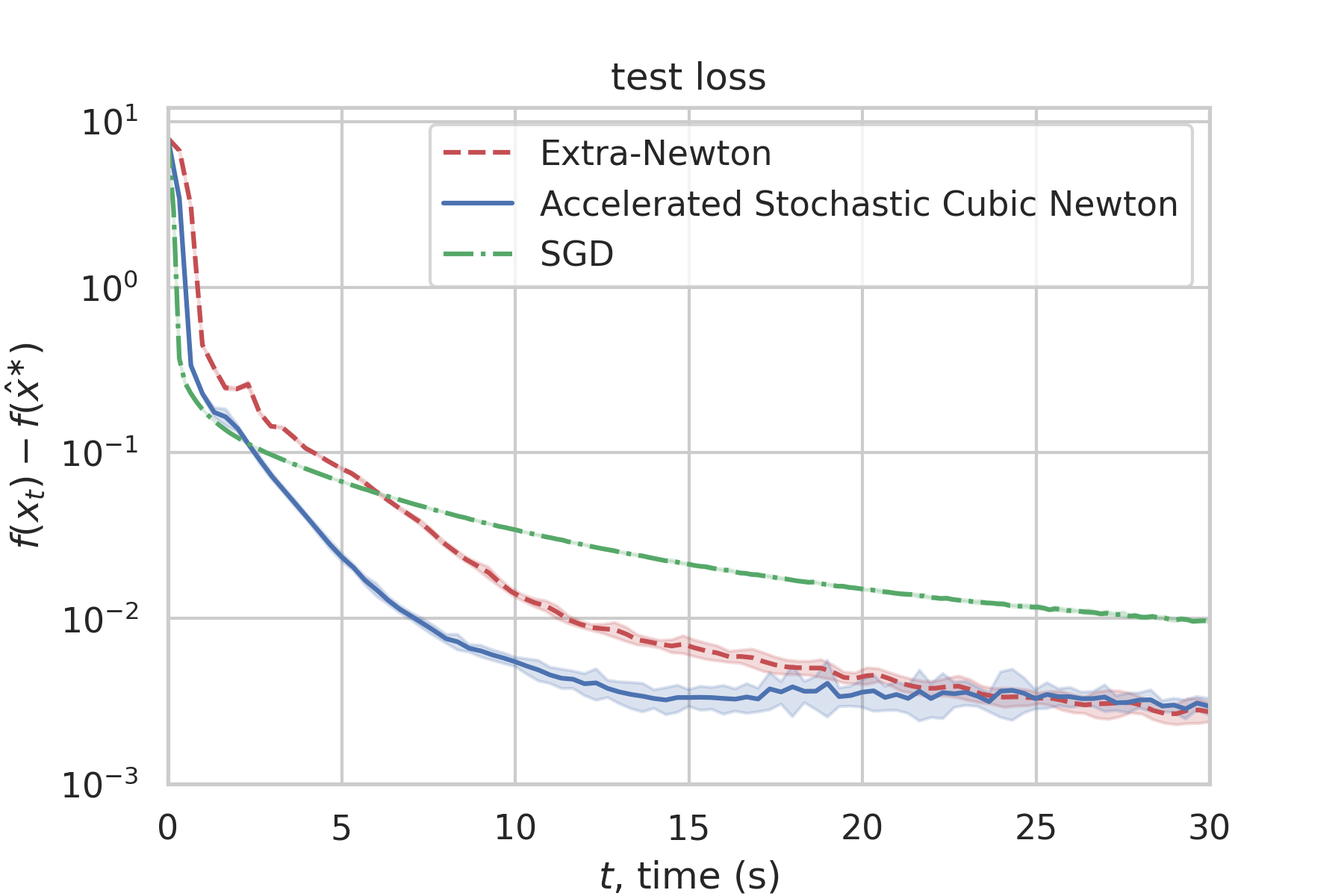}\vspace{-2mm}
         \caption{Test loss}
     \end{subfigure}
     \caption{Logistic regression on \texttt{a9a}. Gradient   batch size is $10000$, Hessian batch size is $150$}
     \label{fig:app_stochastic_150_time}
\end{figure}

\begin{figure}[ht!]
\centering
     \begin{subfigure}{0.48\textwidth}
         \centering         \includegraphics[width=\textwidth]{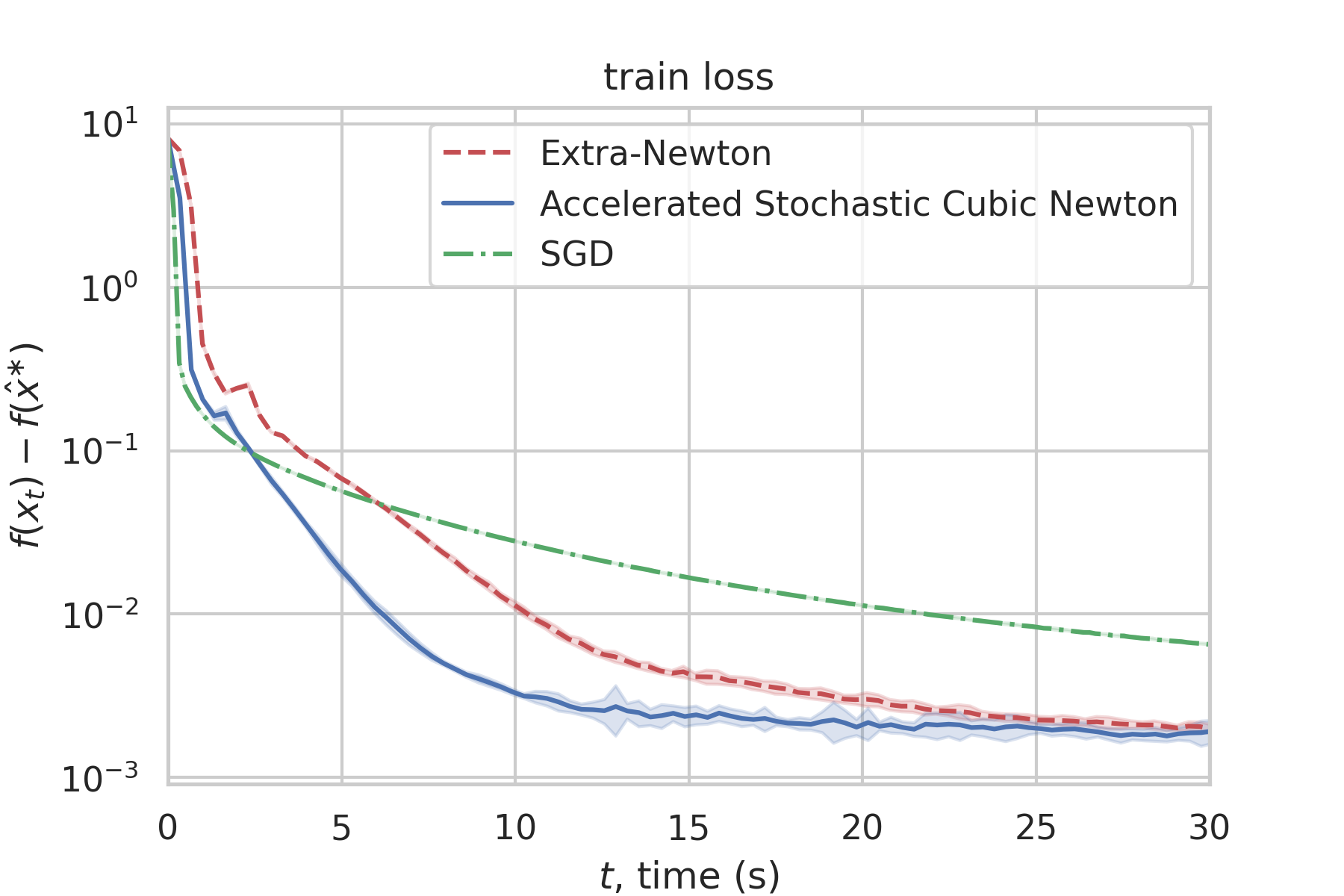}\vspace{-2mm}
         \caption{Train loss}
     \end{subfigure}
     \hfill
     \begin{subfigure}{0.48\textwidth}
         \centering
         \includegraphics[width=\textwidth]{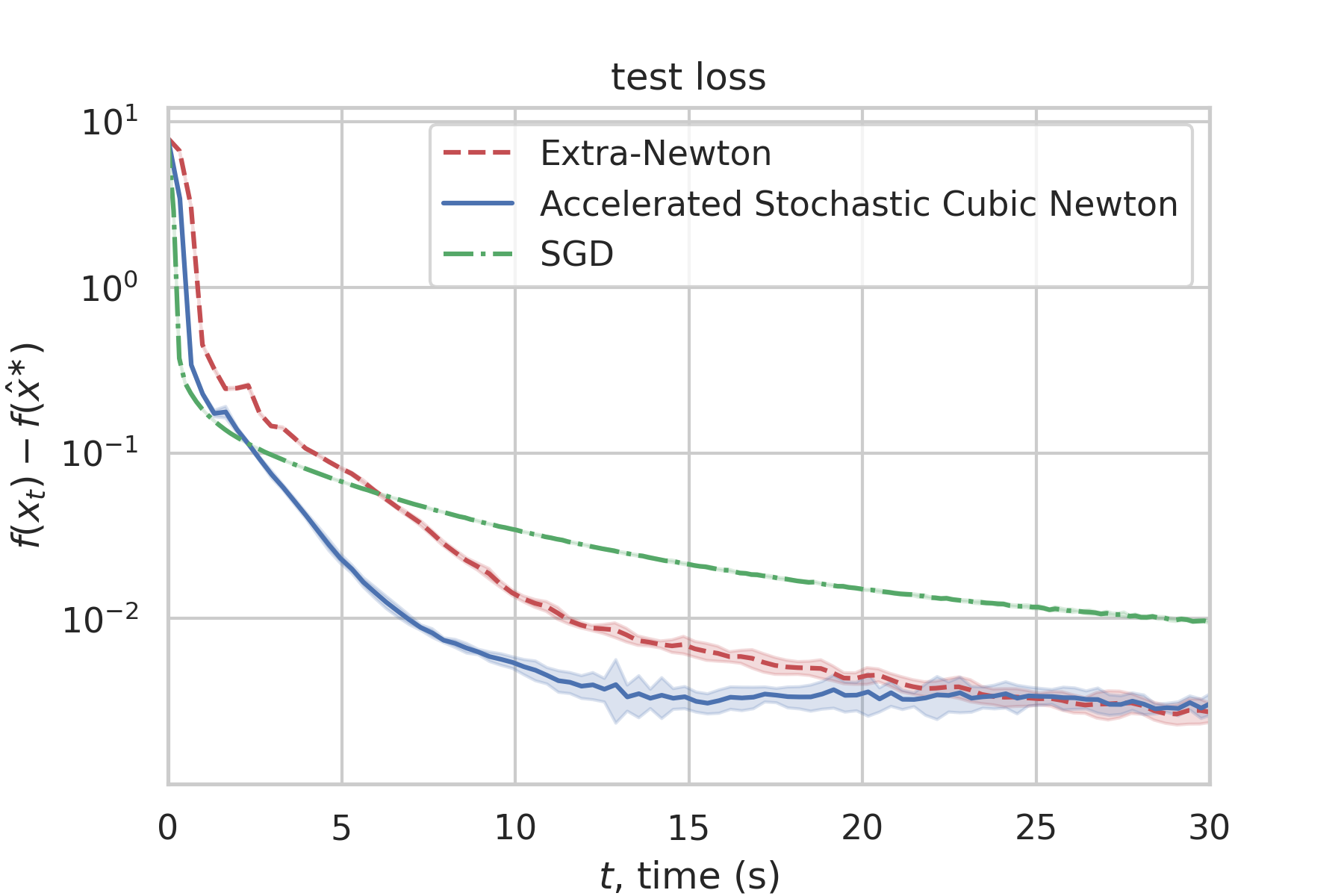}\vspace{-2mm}
         \caption{Test loss}
     \end{subfigure}
     \caption{Logistic regression on \texttt{a9a}. Gradient   batch size is $10000$, Hessian batch size is $450$}
     \label{fig:app_stochastic_450_time}
\end{figure}

\section{Proof of Lemmas \ref{lm:2ord_bound} and \ref{lm:p_ord_bound}}
\begin{customlemma}{\ref{lm:p_ord_bound}}
     Let Assumption \ref{as:lip_p} hold. Then, for any $x,y \in \mathbb{R}^d$, we have
    \begin{equation*}
    \begin{split}
         |f( y) - \phi_{{x},p}(y)|  
         \leq \textstyle{\sum \limits_{i = 1}^p} \frac{1}{i!}\|(G_i(x) - \nabla^i f({x}))[y - x]^{i-1}\|\|y-x\| + \frac{L_p}{(p+1)!}\|y - x\|^{p+1},
     \end{split}
\end{equation*}
    \vspace{-12pt}
\begin{equation*}
    \begin{split}
         \|\nabla f( y) - \nabla \phi_{x,p}(y)\| 
         \leq  \textstyle{\sum \limits_{i = 1}^{p}} \frac{1}{(i-1)!}\|(G_i(x) - \nabla^i f({x}))[y - x]^{i-1}\| + \frac{L_p}{p!}\|y - x\|^{p},
    \end{split}
\end{equation*}
\end{customlemma}

The proof of that lemma is the same as proofs of Lemmas~1,~2 from \cite{agafonov2023inexact}. Lemma~\ref{lm:2ord_bound} is a special case of the Lemma~\ref{lm:p_ord_bound} for $p=2$.

\section{Proof of Theorem \ref{thm:acc_convergence}}
\label{sec:proof_2_ord}

The full proof is organized as follows:
\begin{itemize}
    \item Lemma \ref{lem:upper_seq} provides an upper bound for the estimating sequence  $\psi_t(x)$;
    \item Lemmas \ref{lem:cubic_bound_acc}, \ref{lem:step} present the efficiency of Inexact Cubic Newton step $x_{t+1} = S_{M,\delta_t}(v_{t})$. 
    \item Lemma \ref{lem:step} provides a lower bound on $\psi_t(x)$ based on results of technical Lemmas
    \ref{lem:dual}- \ref{lm:argmin}; 
    \item Everything is combined together in Theorem \ref{thm:2_ord_app} in order to prove convergence and obtain convergence rate.
\end{itemize}

The following lemma shows that the sequence of functions $\bar{\psi}_t(x)$ can be upper bounded by the properly regularized objective function.

\begin{lemma}\label{lem:upper_seq}
    For convex function $f(x)$ and $\psi_t(x)$, we have
    \begin{equation}
        \label{eq:acc_upper_bound}
        \psi_t(x^{\ast}) \leq \frac{f(x^{\ast})}{A_{t - 1}}  + \frac{ \bk_2^{t} + \lambda_t}{2} \|x^{\ast} - x_0\|^2   + \frac{\bk^{t}_{3}}{6}\|x^{\ast} - x_0\|^3 + err_{t}^{up},
        \end{equation}
    where
    \begin{equation}
        \label{eq:err_up}
        err^{up}_t = \sum \limits_{j = 0}^{t - 1} \frac{\alpha_j}{A_j} \la g(x_{j+1}) - \nabla f(x_{j+1}) , x^{\ast} - x_{j+1}\ra
    \end{equation}
\end{lemma}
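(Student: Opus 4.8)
The plan is to unroll the recursive definition of $\psi_t$ and then apply convexity of $f$ to each accumulated linear model; no smoothness is needed, only convexity, which matches the hypotheses. First I would establish, by an easy induction on $t$ (or equivalently by direct telescoping), the exact identity
\begin{equation*}
\psi_t(x) = \frac{\bk_2^{t}+\lambda_t}{2}\|x-x_0\|^2 + \frac{\bk_3^{t}}{6}\|x-x_0\|^3 + \sum_{j=0}^{t-1}\frac{\alpha_j}{A_j}\, l(x,x_{j+1}),
\end{equation*}
valid for every $x$. The point is that the quadratic coefficient starts at $\tfrac{\bk_2^0+\lambda_0}{2}$ in $\psi_0$ and each update contributes $\tfrac{\lambda_{j+1}-\lambda_j}{2}+\tfrac{\bk_2^{j+1}-\bk_2^{j}}{2}$, so the partial sums telescope to $\tfrac{\bk_2^t+\lambda_t}{2}$; the cubic coefficient telescopes in the same way to $\tfrac{\bk_3^t}{6}$; and the only genuinely new contribution at step $j$ is the weighted linear model $\tfrac{\alpha_j}{A_j} l(\cdot, x_{j+1})$.

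Next I would evaluate at $x=x^\ast$ and handle the linear part. Splitting the stochastic gradient as $g(x_{j+1}) = \nabla f(x_{j+1}) + \bigl(g(x_{j+1})-\nabla f(x_{j+1})\bigr)$ and using the first-order convexity inequality $f(x_{j+1}) + \la \nabla f(x_{j+1}), x^\ast - x_{j+1}\ra \le f(x^\ast)$, each model satisfies
\begin{equation*}
l(x^\ast, x_{j+1}) \le f(x^\ast) + \la g(x_{j+1})-\nabla f(x_{j+1}),\, x^\ast - x_{j+1}\ra.
\end{equation*}
Summing these against the weights $\tfrac{\alpha_j}{A_j}$ reproduces precisely the error term $err_t^{up}$ defined in \eqref{eq:err_up}, together with the residual $f(x^\ast)\sum_{j=0}^{t-1}\tfrac{\alpha_j}{A_j}$.

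The final ingredient is the weight identity $\sum_{j=0}^{t-1}\tfrac{\alpha_j}{A_j} = \tfrac{1}{A_{t-1}}$, which I would prove by induction from the defining relation $A_j = (1-\alpha_j)A_{j-1}$: the base case $t=1$ reads $\tfrac{\alpha_0}{A_0} = 1 = \tfrac{1}{A_0}$, and the inductive step reduces to $\tfrac{1}{A_{t-1}}+\tfrac{\alpha_t}{A_t} = \tfrac{1}{A_t}$, which is equivalent to $A_t = (1-\alpha_t)A_{t-1}$. Substituting this identity converts the $f(x^\ast)$ residual into $\tfrac{f(x^\ast)}{A_{t-1}}$, and collecting the three pieces gives exactly \eqref{eq:acc_upper_bound}.

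I expect the only delicate points to be the coefficient bookkeeping in the telescoping step and the weight identity. In particular, the cubic normalization in the statement is $\tfrac{1}{6}$ rather than $\tfrac{1}{3}$, so the proof must read the regularizers under the factorial convention $\tfrac{1}{i!}$ (consistent with the tensor version in Section~\ref{sec:tensor}, where $\tfrac{1}{3!}=\tfrac{1}{6}$); under the literal $\tfrac{1}{i}$ reading the bound would be off by this constant. The convexity step itself is immediate and introduces no further difficulty.
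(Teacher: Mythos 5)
Your proof is correct and follows essentially the same route as the paper's: unroll the recursion for $\psi_t$, apply convexity to each linear model to split off $err_t^{up}$, and use the telescoping identity $\sum_{j=0}^{t-1}\tfrac{\alpha_j}{A_j}=\tfrac{1}{A_{t-1}}$ (which the paper derives from $\tfrac{\alpha_j}{A_j}=\tfrac{1}{A_j}-\tfrac{1}{A_{j-1}}$ rather than by induction, an immaterial difference). Your observation about the cubic normalization is also well taken: the paper's own proof writes $\tfrac{\bk_3^t}{3}$ when unrolling (consistent with the literal $\tfrac{1}{i}$ weights in Algorithm~\ref{alg:inexact_acc_detailed}) and then silently switches to $\tfrac{\bk_3^t}{6}$ in the final display, a constant-factor bookkeeping slip that you correctly flag and that does not affect the convergence rates.
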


\begin{proof}
For $t=0$, let us define $A_{-1}$ such that $\tfrac{1}{A_{-1}}=0$ then $\tfrac{f(x^{\ast})}{A_{-1}}=0$ and
   $$
        \psi_0(x^{\ast}) \leq \frac{\bk^0_2+\lambda_0}{2}\|x^{\ast} - x_0\|^2 + \frac{\bk^0_3}{3}\|x^{\ast} - x_0\|^3$$
    From 
    \begin{equation}\label{eq:estimating_seq}
        \psi_{t+1}(x):= \psi_{t}(x)+ \tfrac{\lambda_{t+1} - \lambda_{t}}{2}\|x - x_0\|^2  + \textstyle{\sum \limits_{i = 2}^{3} }
                \tfrac{\bk^{t+1}_i - \bk^{t}_i}{i}\|x - x_0\|^i
                +\tfrac{\alpha_{t}}{A_{t}} 
                l(x,x_{t+1}),
    \end{equation}
    we have
    \begin{gather}
        \psi_t(x^{\ast}) 
        = \frac{\bk^{t }_2+\lambda_t }{2}\|x^{\ast} - x_0\|^2+\frac{\bk^{t }_3}{3}\|x^{\ast} - x_0\|^3 + \sum \limits_{j = 0}^{t - 1} \frac{\alpha_j}{A_j} l(x^{\ast},x_{j+1}).
        \label{eq:lem_upper_seq_pr2}
    \end{gather}
From \eqref{eq:alphas}, we have that, for all $j \geq 1$, $A_j = A_{j-1}(1-\alpha_j)$, which leads to $\frac{\alpha_j}{A_j}=\frac{1}{A_j}-\frac{1}{A_{j-1}}$. Hence, we have $\sum \limits_{j = 0}^{t - 1} \frac{\alpha_j}{A_j} = \frac{1}{A_{t-1}} - \frac{1}{A_{-1}}=\frac{1}{A_{t-1}}$ and, using the convexity of the objective $f$, we get
    \begin{gather}
        \sum \limits_{j = 0}^{t - 1} \frac{\alpha_j}{A_j} l(x^{\ast},x_{j+1}) \leq \sum \limits_{j = 0}^{t - 1} \frac{\alpha_j}{A_j} \bar{l}(x^{\ast},x_{j+1}) + 
        \sum \limits_{j = 0}^{t - 1} \frac{\alpha_j}{A_j} \la g(x_{j+1}) - \nabla f(x_{j+1}) , x^{\ast} - x_{j+1}\ra \\
        \leq f(x^{\ast})\sum \limits_{j = 0}^{t - 1} \frac{\alpha_j}{A_j} + err_t^{up} = \frac{f(x^{\ast})}{A_{t-1}} + err_t^{up}, \label{eq:lem_upper_seq_pr3}
    \end{gather}
    where $\bar{l}(x, y) = f(y) + \langle \nabla f(y), x - y \rangle$.
    Finally, combining all the inequalities from above, we obtain 
    \begin{gather}
        \psi_t(x^{\ast})  \stackrel{\eqref{eq:lem_upper_seq_pr2},\eqref{eq:lem_upper_seq_pr3}}{\leq}  \frac{f(x^{\ast})}{A_{t - 1}} + \frac{ \bk_2^{t}+\lambda_t}{2} \|x^{\ast} - x_0\|^2 + \frac{\bk^{t}_{3}}{6}\|x^{\ast} - x_0\|^3 + err_t^{up}.
    \end{gather}
\end{proof}

\begin{lemma}
\label{lem:cubic_bound_acc}
For the function $f(x)$ with $L_2$-Lipschitz-continuous Hessian and $H(x_t)$ is $ \delta_2^t$-inexact Hessian for $v_t,x_{t+1} \in \R^d$ we have
\begin{equation}
\label{eq:grad_model_bound}
    \|\nabla \phi_{v_t}(x_{t+1}) - \nabla f(x_{t+1})\| 
        \leq
         \delta_2^t \|x_{t+1} - v_t\| + \frac{L_2}{2}\|x_{t+1} - v_t\|^{2} +  \|g(v_t) - \nabla f(v_t) \|,
\end{equation}
where we denote $\delta_2^t = \delta_t^{v_t, x_{t+1}} $ to simplify the notation.
\end{lemma}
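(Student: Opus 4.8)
The plan is to recognize that the gradient of the inexact model \eqref{eq:2ord_approx_taylor} is available in closed form, $\nabla \phi_{v_t}(x_{t+1}) = g(v_t) + H(v_t)(x_{t+1}-v_t)$, so the quantity $\|\nabla \phi_{v_t}(x_{t+1}) - \nabla f(x_{t+1})\|$ we must control is precisely the gradient-approximation error covered by the second inequality of Lemma \ref{lm:2ord_bound}. First I would instantiate that lemma with $x = v_t$ and $y = x_{t+1}$, which produces the three-term bound
\[
\|\nabla f(x_{t+1}) - \nabla \phi_{v_t}(x_{t+1})\| \leq \|g(v_t) - \nabla f(v_t)\| + \|(H(v_t) - \nabla^2 f(v_t))(x_{t+1}-v_t)\| + \tfrac{L_2}{2}\|x_{t+1}-v_t\|^2,
\]
where I have used that the curvature-remainder term in Lemma \ref{lm:2ord_bound} is bounded by $\tfrac{L_2}{2}\|x_{t+1}-v_t\|^2$, matching the constant appearing in \eqref{eq:grad_model_bound}.

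The only remaining work is to bound the middle Hessian-mismatch term. Here I would invoke the directional inexactness condition \eqref{eq:inexact_hess_def} of Assumption \ref{as:2ord_stoch_grad_inexact_hess} along the specific direction $y - x = x_{t+1} - v_t$, giving $\|(H(v_t) - \nabla^2 f(v_t))(x_{t+1}-v_t)\| \leq \delta_t^{v_t, x_{t+1}}\|x_{t+1}-v_t\| = \delta_2^t\|x_{t+1}-v_t\|$ by the notational convention of the statement. Substituting this into the displayed inequality and reordering the three summands yields exactly \eqref{eq:grad_model_bound}.

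Should a self-contained derivation be preferred over citing Lemma \ref{lm:2ord_bound}, I would instead begin from the fundamental theorem of calculus, $\nabla f(x_{t+1}) - \nabla f(v_t) = \int_0^1 \nabla^2 f(v_t + \theta(x_{t+1}-v_t))(x_{t+1}-v_t)\,d\theta$, and split $\nabla \phi_{v_t}(x_{t+1}) - \nabla f(x_{t+1})$ by the triangle inequality into the gradient noise $g(v_t) - \nabla f(v_t)$, the directional Hessian error $(H(v_t) - \nabla^2 f(v_t))(x_{t+1}-v_t)$, and the curvature remainder $\int_0^1 (\nabla^2 f(v_t) - \nabla^2 f(v_t + \theta(x_{t+1}-v_t)))(x_{t+1}-v_t)\,d\theta$; the last is handled by Assumption \ref{as:lip} via $\|\nabla^2 f(v_t) - \nabla^2 f(v_t+\theta h)\| \leq L_2 \theta \|h\|$ together with $\int_0^1 \theta\,d\theta = \tfrac12$, reproducing the $\tfrac{L_2}{2}\|x_{t+1}-v_t\|^2$ term.

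I do not anticipate any substantive obstacle, as the result is a direct specialization of an already-established estimate. The single point requiring care is that the inexact-Hessian condition \eqref{eq:inexact_hess_def} is \emph{directional}: it need only hold along $y-x$, so I must apply it with the correct pair $(v_t, x_{t+1})$ rather than appealing to an operator-norm bound on $H(v_t) - \nabla^2 f(v_t)$. This is precisely the relaxation highlighted after Assumption \ref{as:2ord_stoch_grad_inexact_hess}, and respecting it is what keeps the Hessian-error contribution at the level $\delta_2^t\|x_{t+1}-v_t\|$.
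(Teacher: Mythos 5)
Your proposal is correct and follows essentially the same route as the paper: the paper's proof is exactly the three-term triangle-inequality decomposition through the exact Taylor gradient $\nabla \Phi_{v_t}(x_{t+1})$, bounding the Hessian-mismatch term directionally by $\delta_2^t\|x_{t+1}-v_t\|$ and the Taylor remainder by $\tfrac{L_2}{2}\|x_{t+1}-v_t\|^2$, which is precisely what both your citation of Lemma \ref{lm:2ord_bound} and your self-contained integral derivation produce. Your emphasis on applying the inexactness condition only along the direction $x_{t+1}-v_t$ is the right point of care and matches the paper's usage.
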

\begin{proof}
    \begin{align*}
         \|\nabla \phi_{v_t}(x_{t+1}) - \nabla f(x_{t+1})\| &= \|\nabla \phi_{v_t}(x_{t+1}) - \Phi_{v_t}(x_{t+1}) +\Phi_{v_t}(x_{t+1})  - \nabla f(x_{t+1})\| \\
         &\leq \|\nabla \phi_{v_t}(x_{t+1}) - \Phi_{v_t}(x_{t+1})\| +\|\Phi_{v_t}(x_{t+1})  - \nabla f(x_{t+1})\| \\
         &=\| (\nabla^2 f(v_t) - B_{v_t})(x_{t+1}- v_t)\| +\|\Phi_{v_t}(x_{t+1})  - \nabla f(x_{t+1})\|\\
         &+ \|g(v_t) - \nabla f(v_t) \| \\
            &{\leq} \delta_2^t \|x_{t+1} - v_t\| + \frac{L_2}{2}\|x_{t+1} - v_t\|^{2} + \|g(v_t) - \nabla f(v_t) \|
    \end{align*}
\end{proof}

The next Lemma characterizes the progress of the inexact cubic step $S_{M,\delta_t}(v_{t})$ in Algorithm \ref{alg:inexact_acc_detailed}.

\begin{lemma}
    \label{lm:scalar_lb_cases}
    Let $\{x_t, v_t\}_{t \geq 1}$ be generated by Algorithm \ref{alg:inexact_acc_detailed}. Then, for any $\bar{\delta}_t\geq 4{\delta}_2^t$, $M\geq 4L_2$  and $x_{t+1} = S_{M,\bar{\delta}_t}(v_{t})$, the following holds
    \begin{equation}\label{eq:scalar_lb_cases}
    \begin{gathered}
        \aa{\frac{\tau^2}{\bar{\delta}_t}} + \frac{\aa{2}}{\bar{\delta}_{t}}\|g(v_t) - \nabla f(v_t) \|^2 + \langle \nabla f(x_{t+1}), v_t - x_{t+1} \rangle 
        \\
        \geq 
         \min \left\{ \|\nabla f(x_{t+1})\|^2\left( \tfrac{1}{4\delta_t }\right), \right.
        \left.\|\nabla f(x_{t+1})\|^\frac{3}{2}
        \left( \tfrac{1}{3M}\right)^\frac{1}{2}\right\}.
        \end{gathered}
    \end{equation}
\end{lemma}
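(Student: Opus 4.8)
The plan is to quantify the single-step progress of the inexact cubic step in terms of the step length $r := \|x_{t+1}-v_t\|$ and the gradient norm $N := \|\nabla f(x_{t+1})\|$, and then trade these off through a case analysis on $r$. First I would write the model gradient explicitly, $\nabla\omega^{M,\bar{\delta}_t}_{v_t}(x_{t+1}) = \nabla\phi_{v_t}(x_{t+1}) + \left(\bar{\delta}_t + \tfrac{M}{2}r\right)(x_{t+1}-v_t)$, and invoke Definition~\ref{def:inexact_sub} to get $\|\nabla\omega^{M,\bar{\delta}_t}_{v_t}(x_{t+1})\|\le\tau$. Solving for $\nabla\phi_{v_t}(x_{t+1})$ and substituting into $\nabla f(x_{t+1}) = \nabla\phi_{v_t}(x_{t+1}) + \big(\nabla f(x_{t+1})-\nabla\phi_{v_t}(x_{t+1})\big)$, I would control the last bracket by Lemma~\ref{lem:cubic_bound_acc}, i.e.\ by $\delta_2^t r + \tfrac{L_2}{2}r^2 + \|g(v_t)-\nabla f(v_t)\|$. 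This isolates a clean ``signal'' $-\left(\bar{\delta}_t + \tfrac{M}{2}r\right)(x_{t+1}-v_t)$ plus controlled error terms.

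Next I would produce the two core estimates. Taking the inner product with $v_t-x_{t+1}$ and applying Cauchy--Schwarz on the subproblem/error terms yields the progress bound $\langle\nabla f(x_{t+1}),v_t-x_{t+1}\rangle \ge (\bar{\delta}_t-\delta_2^t)r^2 + \tfrac{M-L_2}{2}r^3 - (\tau+\|g(v_t)-\nabla f(v_t)\|)r$, while the triangle inequality gives the matching upper bound $N \le (\tau+\|g(v_t)-\nabla f(v_t)\|) + (\bar{\delta}_t+\delta_2^t)r + \tfrac{M+L_2}{2}r^2$. Using $M\ge 4L_2$ and $\bar{\delta}_t\ge 4\delta_2^t$ I would fold the Hessian and smoothness inexactness into the regularization constants, so that, up to those folded factors, both estimates are governed by $G := \bar{\delta}_t r + \tfrac{M}{2}r^2$: the progress is a multiple of $Gr$ minus $(\tau+\|g(v_t)-\nabla f(v_t)\|)r$, and $N$ is a multiple of $G$ plus $(\tau+\|g(v_t)-\nabla f(v_t)\|)$. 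This is the inexact counterpart of the exact identity $\langle\nabla f,v_t-x_{t+1}\rangle = N\,r$.

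The linear-in-$r$ error term is then removed by the budget on the left-hand side: since $\tfrac{\bar{\delta}_t}{4}r^2 - \tau r + \tfrac{\tau^2}{\bar{\delta}_t} = \tfrac{\bar{\delta}_t}{4}\left(r-\tfrac{2\tau}{\bar{\delta}_t}\right)^2 \ge 0$, and likewise for $\|g(v_t)-\nabla f(v_t)\|$ against the term $\tfrac{2}{\bar{\delta}_t}\|g(v_t)-\nabla f(v_t)\|^2$, adding $\tfrac{\tau^2}{\bar{\delta}_t} + \tfrac{2}{\bar{\delta}_t}\|g(v_t)-\nabla f(v_t)\|^2$ lets me complete squares, cancel $-(\tau+\|\cdot\|)r$, and retain part of the quadratic regularization. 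I would finish with a case split at the balance point $r\sim 2\bar{\delta}_t/M$, i.e.\ according to whether the quadratic or the cubic regularization dominates $G$. In the small-step regime $G\le 2\bar{\delta}_t r$, so that $r\ge \tfrac{N}{2\bar{\delta}_t}$ after accounting for the errors, and the progress is at least $\tfrac{N^2}{4\bar{\delta}_t}$; in the large-step regime $G\le M r^2$, so that $r\ge\sqrt{N/M}$ and the progress is at least $\tfrac{N^{3/2}}{\sqrt{3M}}$. Taking the worse of the two branches gives the claimed minimum.

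The main obstacle is the constant bookkeeping. The only slack available is the factor $2$ and the factor $\sqrt{3}$ hidden in the target coefficients $\tfrac{1}{4\bar{\delta}_t}$ and $\tfrac{1}{\sqrt{3M}}$ (versus the $\tfrac{1}{2\bar{\delta}_t}$ and $\tfrac{1}{\sqrt{M}}$ that the exact identity $\langle\nabla f,v_t-x_{t+1}\rangle=Nr$ would yield), and this slack is nearly exhausted once the Hessian and smoothness errors are folded in through $M\ge 4L_2,\ \bar{\delta}_t\ge 4\delta_2^t$; a purely Cauchy--Schwarz/triangle-inequality treatment is slightly too lossy and leaves a narrow gap in the intermediate range of $r$. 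To close it I would avoid collapsing everything into the single scalar $G$ at its worst case, keep $\bar{\delta}_t r$ and $\tfrac{M}{2}r^2$ separate, and exploit the Hessian majorization $\nabla^2 f(y)\preceq \nabla^2\phi_{v_t}(y)+\delta_2^t I + L_2\|y-v_t\| I \preceq \nabla^2\omega^{M,\bar{\delta}_t}_{v_t}(y)$, so that $\omega$ is a genuine upper model of $f$, in order to sharpen the progress constant rather than relying on Cauchy--Schwarz alone. Matching the precise weights $1$ and $2$ on the two budget terms to the target constants is the delicate part of the argument.
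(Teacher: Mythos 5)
Your skeleton matches the paper's: both arguments start from the $\tau$-inexact stationarity of $\omega^{M,\bar\delta_t}_{v_t}$ at $x_{t+1}$, control $\nabla f(x_{t+1})-\nabla\phi_{v_t}(x_{t+1})$ via Lemma~\ref{lem:cubic_bound_acc}, and finish with a case split according to whether $\bar\delta_t$ or $\tfrac{M}{2}r$ dominates $\zeta_{t+1}=\bar\delta_t+\tfrac{M}{2}r$, where $r=\|x_{t+1}-v_t\|$. The difference is the core estimate, and that is where your argument breaks quantitatively. You derive two first-power bounds --- progress at least $(\bar\delta_t-\delta_2^t)r^2+\tfrac{M-L_2}{2}r^3-(\tau+\|e\|)r$ and $N\le(\tau+\|e\|)+(\bar\delta_t+\delta_2^t)r+\tfrac{M+L_2}{2}r^2$ with $e=g(v_t)-\nabla f(v_t)$, $N=\|\nabla f(x_{t+1})\|$ --- and then spend the entire budget $\tfrac{\tau^2}{\bar\delta_t}+\tfrac{2}{\bar\delta_t}\|e\|^2$ on completing squares against $-(\tau+\|e\|)r$. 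That costs $\tfrac{\bar\delta_t}{4}r^2+\tfrac{\bar\delta_t}{8}r^2$ of quadratic progress, leaving you at best $\tfrac{3}{8}\bar\delta_t r^2+\tfrac{3}{8}Mr^3$, while your bound on $N$ still carries the additive $\tau+\|e\|$. The resulting chain fails outright as $r\to0$ with $\tau,\|e\|>0$ (your lower bound vanishes while $N$ does not), and even discarding the additive part the constants do not close: in the small-step regime $N\le\tau+\|e\|+\tfrac{5}{2}\bar\delta_t r$, so the target $\tfrac{N^2}{4\bar\delta_t}$ already demands about $\tfrac{25}{16}\bar\delta_t r^2$ from the $r$-term alone, against the $\tfrac{3}{8}\bar\delta_t r^2$ you retain --- a deficit of roughly a factor of four, not the ``narrow gap'' you describe, and the minimum over the two branches does not rescue you because in that regime the quadratic branch is the binding one. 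The Hessian-majorization fix you sketch addresses neither issue: it removes neither the additive $\tau+\|e\|$ from the bound on $N$ nor the loss incurred by squaring a triangle inequality.

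The paper's proof avoids all of this with one structural move you are missing: it never upper-bounds $N$ separately. It bounds $\|\nabla f(x_{t+1})+\zeta_{t+1}(x_{t+1}-v_t)\|^2\le 2\|\nabla f(x_{t+1})-\nabla\phi_{v_t}(x_{t+1})\|^2+2\tau^2$ and expands the left-hand side as an exact identity, $\|\nabla f(x_{t+1})\|^2+2\zeta_{t+1}\langle\nabla f(x_{t+1}),x_{t+1}-v_t\rangle+\zeta_{t+1}^2r^2$. The cross term is exactly $2\zeta_{t+1}$ times the progress, and $\|\nabla f(x_{t+1})\|^2$ appears with coefficient one and no additive contamination; dividing by $2\zeta_{t+1}$ and using $\zeta_{t+1}\ge\bar\delta_t$ on the error terms leaves $\tfrac{1}{2\zeta_{t+1}}\|\nabla f(x_{t+1})\|^2$ plus a remainder $\tfrac{1}{2\zeta_{t+1}}\bigl(\zeta_{t+1}^2-4(\delta_2^t+\tfrac{L_2}{2}r)^2\bigr)r^2$, which the hypotheses $\bar\delta_t\ge4\delta_2^t$ and $M\ge4L_2$ make nonnegative in the quadratic-dominated case and genuinely cubic in the other. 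The two factors of $2$ from the squared triangle inequalities are precisely what the budget coefficients $1$ and $2$ and the target constants $\tfrac{1}{4\bar\delta_t}$ and $\tfrac{1}{\sqrt{3M}}$ are calibrated to absorb. To salvage your route you would need to redo the estimate at the level of squared norms; the first-power Cauchy--Schwarz version cannot reach the stated constants.
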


\begin{proof}
    For simplicity, we denote $r_{t+1} = \|x_{t+1} - v_t\|$ and
    \begin{equation}
        \label{big_sum_lemma6}
        \zeta_{t+1} =  \bar{\delta}_t + \frac{M}{2}\|x_{t+1} - v_t\|.
    \end{equation}
    
    By Definition \ref{def:inexact_sub} for $x_{t+1} = S^{M,\bar{\delta}_t, \tau}(v_{t})$
    
    \begin{equation}
   \label{eq:opt_cnd_acc}
    \begin{gathered}
    \aa{\tau \geq} \|\nabla \phi_{v_t}(x_{t+1})+ \bar{\delta}_t(x_{t+1} - v_t)  + \frac{M}{2}\|x_{t+1} - v_t\|(x_{t+1} - v_t)\| \\ \stackrel{\eqref{big_sum_lemma6}}{=} \|\nabla \phi_{v_t}(x_{t+1}) + \zeta_{t+1} (x_{t+1} - v_t)\|.
    \end{gathered}
    \end{equation}
    
    We start with getting an upper bound for $\|\nabla \phi_{v_t}(x_{t+1}) - \nabla f(x_{t+1})\|$. 
    \begin{equation}\label{eq:progress_step_error_upper_bound}
        \|\nabla \phi_{v_t}(x_{t+1}) - \nabla f(x_{t+1})\| 
        \stackrel{\eqref{eq:grad_model_bound}}{\leq} 
         \delta_2^t r_{t+1} + \frac{L_2}{2}r_{t+1}^{2} + \|g(v_t) - \nabla f(v_t) \|.
    \end{equation}

    From inexact solution of the subproblem we get
    \begin{equation}
    \begin{gathered}\label{eq:progress_step_inexact_upper_bound}
        \|\nabla f(x_{k+1}) + \zeta_{t+1}(x_{t+1} - v_t)\|^2 \\
        \leq  \|\nabla \phi_{v_t}(x_{t+1}) - \nabla f(x_{t+1}) - \nabla \phi_{v_t}(x_{t+1}) - \zeta_{t+1}(x_{t+1} - v_t)\|^2 \\
        \leq 2\|\nabla \phi_{v_t}(x_{t+1}) - \nabla f(x_{t+1})\|^2 + 2\|\nabla \phi_{v_t}(x_{t+1}) + \zeta_{t+1}(x_{t+1} - v_t)\|^2 \\
        \stackrel{\text{Def.~\ref{def:inexact_sub}}}{\leq} 2\|\nabla \phi_{v_t}(x_{t+1}) - \nabla f(x_{t+1})\|^2 + 2\tau^2
    \end{gathered}
    \end{equation}

   Next, from the previous inequality and \aa{\eqref{eq:progress_step_error_upper_bound}}, we get
    \begin{gather*}
        \aa{4}\|g(v_t) - \nabla f(v_t) \|^2 + \aa{4}(\delta_2^t  + \tfrac{L_2}{2}r_{t+1})^2r_{t+1}^2 \stackrel{\eqref{eq:progress_step_error_upper_bound}}{\geq} \aa{2}\|\nabla \phi_{v_t, p}(x_{t+1}) - \nabla f(x_{t+1})\|^2 
        \\
\aa{\stackrel{\eqref{eq:progress_step_inexact_upper_bound}}{\geq}} \left\|\nabla  f(x_{t+1}) +  \zeta_{t+1} (x_{t+1} - v_t) \right\|^2  - \aa{2\tau^2}\\
        = 2 \langle \nabla f(x_{t+1}), x_{t+1} - v_t \rangle \zeta_{t+1} + \|\nabla f(x_{t+1}) \|^2  + \zeta_{t+1}^2\|x_{t+1} - v_t\|^2 - \aa{2\tau^2}.
    \end{gather*}
    
    Hence,
    
    \begin{gather*}
        \aa{\frac{\tau^2}{\zeta_{t+1}}} + \frac{\aa{2}}{\zeta_{t+1}}\|g(v_t) - \nabla f(v_t) \|^2 + 
        \langle \nabla f(x_{t+1}), v_t - x_{t+1} \rangle 
        \geq
        \frac{1}{2\zeta_{t+1}}\|\nabla f(x_{t+1}) \|^2
        + \frac{1}{2\zeta_{t+1}}\ls \zeta_{t+1}^2 - 4(\delta_2^t  + \tfrac{L_2}{2}r_{t+1})^2  \rs r_{t+1}^2,
    \end{gather*}
    
    and finally by using defenetion of $\zeta_{t+1}$, we get
    
    \begin{gather*}
        \frac{\tau^2}{\bar{\delta}_t} + \frac{2}{\bar{\delta}_{t}}\|g(v_t) - \nabla f(v_t) \|^2 + 
        \langle \nabla f(x_{t+1}), v_t - x_{t+1} \rangle 
        \geq
        \frac{1}{2\zeta_{t+1}}\|\nabla f(x_{t+1}) \|^2
        + \frac{1}{2\zeta_{t+1}}\ls \zeta_{t+1}^2 - 4(\delta_2^t  + \tfrac{L_2}{2}r_{t+1})^2  \rs r_{t+1}^2
    \end{gather*}

Next, we consider $2$ cases depending on which term dominates in the $\zeta_{t+1}$.

\begin{itemize}[leftmargin=10pt,nolistsep]
    \item If $\bar{\delta}_t \geq \frac{M}{2}\|x_{t+1}-v_t\|$, then we get the following bound
    
\begin{gather*}
        \frac{\tau^2}{\bar{\delta}_t} + \frac{2}{\bar{\delta}_{t}}\|g(v_t) - \nabla f(v_t) \|^2 + 
        \langle \nabla f(x_{t+1}), v_t - x_{t+1} \rangle 
        \geq
        \frac{1}{2\zeta_{t+1}}\|\nabla f(x_{t+1}) \|^2
        + \frac{1}{2\zeta_{t+1}}\ls \zeta_{t+1}^2 - 4({\delta_2^t}  + \tfrac{L_2}{2}r_{t+1})^2  \rs r_{t+1}^2\\
        \geq \tfrac{1}{4\bar{\delta}_t }
        \|\nabla f(x_{t+1})\|^2
    \end{gather*}
    
    \item If $\bar{\delta}_t < \frac{M}{2}\|x_{t+1}-v_t\|$ , then similarly to previous case, we get
    
    \begin{gather*}
       \frac{\tau^2}{\bar{\delta}_t} + \frac{2}{\bar{\delta}_{t}}\|g(v_t) - \nabla f(v_t) \|^2 + 
        \langle \nabla f(x_{t+1}), v_t - x_{t+1} \rangle \\
        \geq
        \frac{\|\nabla f(x_{t+1}) \|^2}{2\zeta_{t+1}}
        + 
       \ls\ls \bar{\delta}_t + \tfrac{M}{2}r_{t+1}\rs^2- 4\ls{\delta_2^t} + \tfrac{L_2}{2}r_{t+1}\rs^2 \rs \frac{r_{t+1}^2}{2\zeta_{t+1}} \\
       = 
       \frac{\|\nabla f(x_{t+1}) \|^2}{2\zeta_{t+1}}
        + 
        \ls\ls \bar{\delta}_t - 2\delta_2^t + \tfrac{M-\aa{2}L_2}{2}r_{t+1}\rs \ls\bar{\delta_t} + 2\delta_2^t + \tfrac{\aa{2}L_2+M}{2}r_{t+1}\rs \rs \frac{r_{t+1}^2}{2\zeta_{t+1}}
    \end{gather*}
    
    \begin{gather*}
        \geq \frac{\|\nabla f(x_{t+1}) \|^2}{2Mr_{t+1}}
        + \frac{M^2-\aa{4}L_2^2}{4} \frac{r_{t+1}^3}{2M}
        \geq \frac{\|\nabla f(x_{t+1}) \|^2}{2Mr_{t+1}}
        + \frac{2M}{32} r_{t+1}^3
        \geq \ls\frac{1}{3M}\rs^{\frac{1}{2}}\|\nabla f(x_{t+1}) \|^{\frac{3}{2}},
    \end{gather*}
    
    where for the last inequality, we use $\tfrac{\alpha}{r} + \tfrac{\beta r^3}{3} \geq \frac{4}{3}\beta^{1/4}\al^{3/4}$.
\end{itemize} 
\end{proof}

We use the following lemma  \cite[Lemma~7]{agafonov2023inexact}.
\begin{lemma}\label{lm:argmin}
    Let $h(x)$ be a convex function, $x_0 \in \mathbb{R}^n$, $\theta_i \geq 0$~~for~ $i = 2,\ldots, p + 1$ and 
    $$\bar{x} = \arg \min \limits_{x \in \mathbb{R}^n} \{\bar{h}(x) = h(x) + \sum \limits_{i = 2}^{p + 1}  \theta_i d_i(x -x_0)\},$$ 

    where $d_i(x) = \tfrac{1}{i}\|x\|^i$ is a power-prox function.
    Then, for all $x\in \mathbb{R}^n$,
    
    $$\bar{h}(x) \geq \bar{h}(\bar{x}) + \sum  \limits_{i = 2}^{p + 1} \left(\frac{1}{2}\right)^{i - 2} \theta_i d_i(x - \bar{x}).$$
\end{lemma}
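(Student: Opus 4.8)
The plan is to combine the first-order optimality condition at $\bar x$ with the \emph{uniform convexity} of the power-prox functions $d_i$, so that the linear part of the estimate cancels and only the uniformly convex remainder survives. Since $\bar x$ minimizes the convex function $\bar h$, there exists a subgradient $g \in \partial h(\bar x)$ satisfying the stationarity condition
\[
g + \sum_{i=2}^{p+1} \theta_i \nabla d_i(\bar x - x_0) = 0,
\]
where $\nabla d_i(z) = \|z\|^{i-2} z$. This identity is precisely what will annihilate all first-order terms in $x - \bar x$ below.

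The key ingredient is a Bregman-type lower bound expressing uniform convexity of each prox term: for all $y, z \in \mathbb{R}^n$ and $i \geq 2$,
\[
d_i(y) \geq d_i(z) + \langle \nabla d_i(z), y - z\rangle + \left(\tfrac{1}{2}\right)^{i-2} d_i(y - z).
\]
I would apply this with $y = x - x_0$ and $z = \bar x - x_0$, noting $y - z = x - \bar x$, and combine it with the subgradient inequality $h(x) \geq h(\bar x) + \langle g, x - \bar x\rangle$ for the convex part. Expanding $\bar h(x) = h(x) + \sum_{i} \theta_i d_i(x - x_0)$ and grouping the linear contributions produces the term $\langle g + \sum_{i} \theta_i \nabla d_i(\bar x - x_0),\, x - \bar x\rangle$, which vanishes by stationarity; the zeroth-order parts reassemble into $\bar h(\bar x)$, and the power-prox remainders give exactly $\sum_{i} \theta_i (1/2)^{i-2} d_i(x - \bar x)$, which is the claim.

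The main obstacle is establishing the displayed uniform-convexity inequality for $d_i(x) = \tfrac{1}{i}\|x\|^i$. I would first prove uniform monotonicity of the gradient, namely $\langle \nabla d_i(y) - \nabla d_i(z),\, y - z\rangle \geq (1/2)^{i-2}\|y - z\|^i$, a purely Euclidean estimate for $\|y\|^{i-2} y - \|z\|^{i-2} z$ that reduces by homogeneity to a low-dimensional two-variable inequality. I would then recover the Bregman form by integrating along the segment $z + s(y - z)$, $s \in [0,1]$: the monotonicity bound gives $\langle \nabla d_i(z + s(y-z)) - \nabla d_i(z),\, y - z\rangle \geq (1/2)^{i-2} s^{i-1}\|y - z\|^i$, and $\int_0^1 s^{i-1}\,ds = 1/i$ produces the sharp factor $(1/2)^{i-2} d_i(y-z)$. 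The delicate point is tracking the exponent-dependent constant through the homogeneity reduction and the integration so that it matches $(1/2)^{i-2}$ exactly rather than a looser value.
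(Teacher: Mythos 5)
Your argument is correct: the stationarity condition at $\bar x$ plus the uniform-convexity (Bregman) lower bound $d_i(y)\ge d_i(z)+\langle \nabla d_i(z),y-z\rangle+\left(\tfrac{1}{2}\right)^{i-2}d_i(y-z)$ for each power-prox term is exactly the standard route, and your derivation of that bound from the gradient-monotonicity estimate via integration along the segment recovers the constant $(1/2)^{i-2}$ correctly. The paper itself does not prove this lemma but imports it as \citep[Lemma~7]{agafonov2023inexact}, whose proof (tracing back to Nesterov's Lemma~4 in the 2008 accelerated cubic regularization paper for the monotonicity inequality) is essentially identical to what you propose.
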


We will also use the next technical lemma \cite{nesterov2008accelerating,ghadimi2017second} on Fenchel conjugate for the $p$-th power of the norm.
\begin{lemma}\label{lem:dual}
    Let $g(z)=\frac{\theta}{p}\|z\|^{p}$ for $p \geq 2$ and $g^{*}$ be its conjugate function i.e., $g^{*}(v)=\sup _{z}\{\langle v, z\rangle-$ $g(z)\} .$ Then, we have
$$
g^{*}(v)=\frac{p-1}{p}\left(\frac{\|v\|^{p}}{\theta}\right)^{\frac{1}{p-1}}
$$
Moreover, for any $v, z \in \mathbb{R}^{n}$, we have $g(z)+g^{*}(v)-\langle z, v\rangle \geq 0 .$
\end{lemma}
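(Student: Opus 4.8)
The plan is to compute the supremum in the definition of $g^{*}$ directly via first-order optimality and then simplify the resulting exponents. First I would observe that since $p \geq 2$ and $\theta > 0$, the map $z \mapsto \langle v, z\rangle - g(z)$ is concave (because $g$ is convex) and tends to $-\infty$ as $\|z\| \to \infty$ (because $g$ grows like $\|z\|^{p}$ with $p > 1$), so the supremum is finite and attained at a stationary point. Setting the gradient to zero gives $v = \theta\|z\|^{p-2}z$, which forces the optimal $z$ to be a nonnegative multiple of $v$; taking norms on both sides yields $\|v\| = \theta\|z\|^{p-1}$, i.e. $\|z\| = (\|v\|/\theta)^{1/(p-1)}$. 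The degenerate case $v = 0$ is handled separately by taking $z = 0$, where both the supremum and the claimed formula vanish.

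Next I would substitute the optimal $z$ back into the objective. From $v = \theta\|z\|^{p-2}z$ we get $\langle v, z\rangle = \theta\|z\|^{p}$, so that $\langle v, z\rangle - g(z) = \theta\|z\|^{p} - \tfrac{\theta}{p}\|z\|^{p} = \tfrac{p-1}{p}\,\theta\|z\|^{p}$. Inserting $\|z\|^{p} = (\|v\|/\theta)^{p/(p-1)}$ and collecting the powers of $\theta$ — where the key identity is $1 - \tfrac{p}{p-1} = -\tfrac{1}{p-1}$ — produces exactly $g^{*}(v) = \tfrac{p-1}{p}\bigl(\|v\|^{p}/\theta\bigr)^{1/(p-1)}$, as claimed.

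Finally, the "moreover" statement is simply the Fenchel--Young inequality and needs no further computation: by the very definition of the conjugate, $g^{*}(v) = \sup_{w}\{\langle v, w\rangle - g(w)\} \geq \langle v, z\rangle - g(z)$ for every $z$, and rearranging gives $g(z) + g^{*}(v) - \langle z, v\rangle \geq 0$ immediately.

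The overall computation is routine; the only steps requiring a little care are the bookkeeping of the fractional exponents of $\theta$ and $\|v\|$ when rewriting $\theta\|z\|^{p}$ in terms of $\|v\|$, and the justification that the stationary point is the global maximizer rather than merely a critical point — which is guaranteed by the concavity and coercivity noted at the outset.
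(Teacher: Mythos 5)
Your derivation is correct: the stationarity condition $v=\theta\|z\|^{p-2}z$, the norm identity $\|v\|=\theta\|z\|^{p-1}$, and the exponent bookkeeping $1-\tfrac{p}{p-1}=-\tfrac{1}{p-1}$ all check out, and the ``moreover'' part is indeed just Fenchel--Young. The paper itself offers no proof of this lemma --- it is imported as a known technical fact from \cite{nesterov2008accelerating,ghadimi2017second} --- and your argument is the standard direct computation one would find there, so there is nothing to reconcile; the only implicit hypothesis worth flagging is $\theta>0$, without which the stated formula for $g^{*}$ is not defined.
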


Finally, the last step is the next Lemma which prove that $\frac{f(x_t)}{A_{t-1}} \leq  \min \limits_x \bar{\psi}_t(x) = \bar{\psi}_t^{\ast}$.

\begin{lemma}\label{lem:step}
    Let $\{x_t, y_t\}_{t \geq 1}$ be generated by Algorithm \ref{alg:inexact_acc_detailed}. 
    Then
    \begin{equation}\label{eq:lemma_ass}
    \psi_t^{\ast}  = \min \limits_x \psi_t(x)   \geq \frac{f(x_t)}{A_{t-1}} - err_{t}^{v}-err_{t}^{x}
    \aa{-err_{t}^{\tau}},
    \end{equation}
    where 
    \begin{equation}\label{eq:err_v}
    err_{t}^{v}  = \sum \limits_{j = 0}^{t - 1}  \frac{\aa{2}}{A_j \bar{\delta}_j}\|g(v_j) - \nabla f(v_j) \|^2 ,
    \end{equation}
    \aa{\begin{equation}\label{eq:err_tau_p}
    err_{t}^{\tau}  = \sum \limits_{j = 0}^{t - 1}  \frac{\tau^2}{A_j \bar{\delta}_j} ,
    \end{equation}}
    and
     \begin{equation}\label{eq:err_x}
    err_{t}^{x}  = \sum \limits_{j = 0}^{t-1}  \frac{\alpha_j^2}{2A_j^2\lambda_j} \|g(x_{j+1})-\nabla f(x_{j+1}) \|^2 + \sum \limits_{j = 0}^{t-1}  \frac{\alpha_j}{A_j} \la g(x_{j+1}) -\nabla f(x_{j+1}), y_{j} - x_{j+1}\ra .
    \end{equation}
\end{lemma}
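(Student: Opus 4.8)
The plan is to establish the lower bound on $\psi_t^{\ast}$ by induction on $t$, mirroring the standard estimating-sequence argument from \citep{nesterov2008accelerating}, but carefully tracking the three error terms $err_t^v$, $err_t^x$, and $err_t^{\tau}$ that arise from gradient stochasticity, the linearization error in $\psi_t$, and the inexact subproblem solution respectively. For the base case $t=1$ (or $t=0$ with the convention $\tfrac{1}{A_{-1}}=0$), the inequality reduces to $\psi_1^{\ast}\geq \tfrac{f(x_1)}{A_0}$ minus the first summand of each error, which should follow directly from the progress guarantee of the cubic step in Lemma~\ref{lm:scalar_lb_cases} together with the definition of $\psi_1$.

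For the inductive step, I would first use the structure of the update \eqref{eq:estimating_seq}: since $\psi_{t+1}$ is obtained from $\psi_t$ by adding convex power-prox regularizers and the scaled linear model $\tfrac{\alpha_t}{A_t}l(x,x_{t+1})$, I apply Lemma~\ref{lm:argmin} to the minimizer $y_t$ of $\psi_t$ to obtain a lower bound of the form
\begin{equation*}
\psi_{t+1}(x)\geq \psi_t^{\ast} + \tfrac{\lambda_t}{2}\|x-y_t\|^2 + \tfrac{\alpha_t}{A_t}l(x,x_{t+1}),
\end{equation*}
using the quadratic term to retain strong convexity around $y_t$. Then I would substitute the inductive hypothesis $\psi_t^{\ast}\geq \tfrac{f(x_t)}{A_{t-1}}-err_t^v-err_t^x-err_t^{\tau}$ and minimize the right-hand side over $x$. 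The minimization of the quadratic-plus-linear part is handled by Lemma~\ref{lem:dual} (Fenchel conjugate of the squared norm), which produces the new contributions to $err_{t+1}^x$ of the form $\tfrac{\alpha_t^2}{2A_t^2\lambda_t}\|g(x_{t+1})-\nabla f(x_{t+1})\|^2$.

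The crux is then to convert the lower bound into $\tfrac{f(x_{t+1})}{A_t}$. Here I would split the stochastic linear model as $l(x,x_{t+1})=\bar{l}(x,x_{t+1})+\la g(x_{t+1})-\nabla f(x_{t+1}), x-x_{t+1}\ra$, use convexity to bound $\bar{l}$ from below by a first-order Taylor term, and then invoke the step-progress inequality \eqref{eq:scalar_lb_cases} at $v_t$ with $x_{t+1}=S^{M,\bar{\delta}_t,\tau}(v_t)$. The relation $v_t=(1-\alpha_t)x_t+\alpha_t y_t$ together with $A_t=A_{t-1}(1-\alpha_t)$ and the parameter choice $\bar{\kappa}_3^{t+1}=\tfrac{8M}{3}\tfrac{\alpha_{t+1}^3}{A_{t+1}}$ is what makes the inner-product term $\la \nabla f(x_{t+1}),v_t-x_{t+1}\ra$ and the cubic/quadratic remainders telescope correctly; the min in \eqref{eq:scalar_lb_cases} is absorbed by matching against the two regimes of $\bar{\delta}_t$ versus $\tfrac{M}{2}\|x_{t+1}-v_t\|$. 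The gradient-error pieces from \eqref{eq:scalar_lb_cases}, namely $\tfrac{\tau^2}{\bar{\delta}_t}$ and $\tfrac{2}{\bar{\delta}_t}\|g(v_t)-\nabla f(v_t)\|^2$, accumulate into the new terms of $err_{t+1}^{\tau}$ and $err_{t+1}^v$ after dividing by $A_t$.

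The main obstacle I anticipate is the bookkeeping in this final conversion step: one must simultaneously match the convexity-generated linear term, the Taylor lower bound for $f(x_{t+1})$, and the cubic progress from Lemma~\ref{lm:scalar_lb_cases} so that all curvature terms cancel and exactly $\tfrac{f(x_{t+1})}{A_t}$ survives, while every stochastic residual lands in the correct error accumulator with the prescribed $\tfrac{\alpha_j}{A_j}$ or $\tfrac{1}{A_j\bar{\delta}_j}$ weighting. Getting the powers of $\alpha_t$ and $A_t$ to align with the parameter definitions in \eqref{eq:params}, and verifying that the scaling $x=\alpha_t y + (1-\alpha_t)x_t$ recovers exactly $v_t$ so that \eqref{eq:scalar_lb_cases} applies, is where the delicate algebra lies; the two-case structure of the cubic bound must be threaded through without loss.
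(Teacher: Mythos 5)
Your plan follows essentially the same route as the paper's proof: induction on $t$, Lemma~\ref{lm:argmin} applied at the minimizer $y_t$, the split $l = \bar{l} + \la g-\nabla f,\cdot\ra$ with Young's inequality generating the $\tfrac{\alpha_t^2}{2A_t^2\lambda_t}$ term, the identity $v_t=(1-\alpha_t)x_t+\alpha_t y_t$ to merge the two linear models, and the two-regime matching of Lemma~\ref{lm:scalar_lb_cases} against the Fenchel conjugates from Lemma~\ref{lem:dual}. The only imprecision is that your displayed intermediate bound should retain the full prox terms $\tfrac{\bk_2^t+\lambda_t}{2}\|x-y_t\|^2+\tfrac{\bk_3^t}{6}\|x-y_t\|^3$ (not just the $\lambda_t$ part), since the $\bk_2^t$ and $\bk_3^t$ pieces are exactly what absorb the two cases of the min --- which you do acknowledge later --- and the base case is handled trivially at $t=0$ via $\tfrac{1}{A_{-1}}=0$ rather than needing the cubic-step progress.
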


\begin{proof}
    We prove Lemma by induction. Let us start with $t=0$, we define $A_{-1}$ such that $\tfrac{1}{A_{-1}}=0$. Then $\tfrac{f(x_0)}{A_{-1}}=0$ and $\psi_0^{\ast}=0$, hence, $\psi_0^{\ast}\geq \tfrac{f(x_0)}{A_{-1}}$. Let us assume that \eqref{eq:lemma_ass} is true for $t$
    and show that \eqref{eq:lemma_ass} is true for $t+1$.
    By definition,
    \begin{gather*}
        \psi_t(x) =  \frac{ \lambda_t+\bk^{t}_2}{2}\|x - x_0\|^2 + \frac{ \bk^{t}_3}{3}\|x - x_0\|^3 + \sum \limits_{j = 0}^{t - 1} \frac{\alpha_j}{A_j}l(x,x_{j+1}).
    \end{gather*}
    Next, we apply Lemma \ref{lm:argmin} with the following choice of parameters: $h(x) =  \sum \limits_{j = 0}^{t - 1}\frac{\alpha_j}{A_j}l(x,x_{j+1})$, $\theta_2 = \lambda_t + \bk^{t}_2$, and $\theta_3 = \bk^{t}_3$. \\
    By \eqref{eq:estimating_seq}, $y_t = \argmin \limits_{x\in \mathbb{R}^d} h(x)$, and we have
    \begin{gather*}
        \psi_t(y_{t+1}) \geq \psi_t^{\ast} + \frac{ \bk^{t}_2 + \lambda_t}{2}\|x - y_t\|^2 + \frac{ \bk^{t}_3}{6}\|x - y_t\|^3\\
        \stackrel{\eqref{eq:lemma_ass}}{\geq} \frac{f(x_t)}{A_{t - 1}} + \frac{ \bk^{t}_2+ \lambda_t}{2}\|x - y_t\|^2 + \frac{ \bk^{t}_3}{6}\|x - y_t\|^3 - err_{t}^{v}-err_{t}^{x} \aa{- err_{t}^{\tau}},
    \end{gather*}
    where the last inequality follows from the assumption of the lemma.
    
    By the definition of $\psi_{t+1}(x)$, the above inequality, and convexity of $f$, we obtain
    \begin{gather*}
        \psi_{t+1}(y_{t+1}) =  \psi_t(y_{t+1}) + \frac{ \lambda_{t+1}-\lambda_{t}}{2}\|y_{t+1} - x_0\|^2 
        + \frac{ \bk^{t+1}_2-\bk_2^{t}}{2}\|y_{t+1} - x_0\|^2\\
            +  \frac{ \bk^{t+1}_3-\bk_3^{t}}{3}\|y_{t+1} - x_0\|^3
         + \frac{\alpha_t}{A_t}l(y_{t+1},x_{t+1})\\
        \geq \frac{f(x_t)}{A_{t - 1}} + \frac{ \bk^{t}_2+\lambda_t}{2}\|y_{t+1} - y_t\|^2 
            +  \frac{ \bk^{t}_3}{6}\|y_{t+1} - y_t\|^3  + \frac{\alpha_t}{A_t}l(y_{t+1},x_{t+1}) - err_{t}^{v}-err_{t}^{x}\aa{- err_{t}^{\tau}}\\
             = \frac{f(x_t)}{A_{t - 1}} + \frac{ \bk^{t}_2+\lambda_t}{2}\|y_{t+1} - y_t\|^2 
            +  \frac{ \bk^{t}_3}{6}\|y_{t+1} - y_t\|^3  - err_{t}^{v}-err_{t}^{x}\aa{- err_{t}^{\tau}}\\ 
            \frac{\alpha_t}{A_t} (f(x_{t+1}) 
    + \la \nabla f(x_{t+1}), y_{t+1} - x_{t+1}\ra + \la g(x_{t+1}) -\nabla f(x_{t+1}), y_{t+1} - y_{t}\ra + \la g(x_{t+1}) -\nabla f(x_{t+1}), y_{t} - x_{t+1}\ra )\\
    \geq \frac{f(x_t)}{A_{t - 1}} + \frac{ \bk^{t}_2}{2}\|y_{t+1} - y_t\|^2 
            +  \frac{ \bk^{t}_3}{6}\|y_{t+1} - y_t\|^3  - err_{t}^{v}-err_{t}^{x}\aa{- err_{t}^{\tau}}\\ 
            +\frac{\alpha_t}{A_t} \ls f(x_{t+1}) 
    + \la \nabla f(x_{t+1}), y_{t+1} - x_{t+1}\ra \rs - \frac{\alpha_t^2}{2A_{t}^2\lambda_t}\| g(x_{t+1}) -\nabla f(x_{t+1})\|^2  + \frac{\alpha_t}{A_t}\la g(x_{t+1}) -\nabla f(x_{t+1}), y_{t} - x_{t+1}\ra\\
    = \frac{f(x_t)}{A_{t - 1}} + \frac{ \bk^{t}_2}{2}\|y_{t+1} - y_t\|^2 
            +  \frac{ \bk^{t}_3}{6}\|y_{t+1} - y_t\|^3  - err_{t}^{v}-err_{t+1}^{x}\aa{- err_{t}^{\tau}}
           + \frac{\alpha_t}{A_t} \ls f(x_{t+1}) 
    + \la \nabla f(x_{t+1}), y_{t+1} - x_{t+1}\ra \rs \\
        \geq  \frac{ \bk^{t}_2}{2}\|y_{t+1} - y_t\|^2 
            +  \frac{ \bk^{t}_3}{6}\|y_{t+1} - y_t\|^3 - err_{t}^{v}-err_{t+1}^{x}\aa{- err_{t}^{\tau}}\\
         + \frac{1}{A_{t-1}}(f(x_{t+1}) + \langle \nabla f(x_{t+1}), x_t - x_{t+1}\rangle) + \frac{\alpha_t}{A_t} \ls f(x_{t+1}) 
    + \la \nabla f(x_{t+1}), y_{t+1} - x_{t+1}\ra \rs.
    \end{gather*}
    Next, we consider the sum of two linear models from the last inequality:
    \begin{gather*}
    \frac{1}{A_{t-1}}(f(x_{t+1}) + \langle \nabla f(x_{t+1}), x_t - x_{t+1}\rangle)
    + \frac{\alpha_t}{A_t} (f(x_{t+1}) 
    + \langle \nabla f(x_{t+1}), y_{t+1} - x_{t+1}\rangle)\\
        \stackrel{\eqref{eq:alphas}}{=} \frac{1 - \alpha_t}{A_t}f(x_{t+1}) + \frac{1 - \alpha_t}{A_t}\langle \nabla f(x_{t+1}), x_t - x_{t+1}\rangle + \frac{\alpha_t}{A_t}f(x_{t+1}) +\frac{\alpha_t}{A_t}\langle \nabla f(x_{t+1}), y_{t+1}- x_{t+1}\rangle 
    \end{gather*}
    \begin{gather*}
        {=} \frac{f(x_{t+1})}{A_{t}} + \frac{1 - \alpha_t}{A_t}\langle \nabla f(x_{t+1}), \frac{v_t - \alpha_t y_t}{1 - \alpha_t}- x_{t+1}\rangle + \frac{\alpha_t}{A_t}\langle \nabla f(x_{t+1}), y_{t+1}- x_{t+1} \rangle \\
        = \frac{f(x_{t+1})}{A_{t}} + \frac{1}{A_t}\langle \nabla f(x_{t+1}), v_t- x_{t+1}\rangle + \frac{\alpha_t}{A_t}\langle \nabla f(x_{t+1}), y_{t+1}- y_t \rangle.
    \end{gather*}
    As a result, by \eqref{eq:estimating_seq}, we get
    \begin{equation}
    \label{eq:x_bar_def}
        \begin{gathered}
            \psi^{\ast}_{t+1} = \psi_{t+1}(y_{t+1})  \geq \frac{f(x_{t+1})}{A_{t}} + \frac{1}{A_t}\langle \nabla f(x_{t+1}), v_t- x_{t+1}\rangle 
            + \frac{ \bk^{t}_2}{2}\|y_{t+1} - y_t\|^2 \\
            +  \frac{ \bk^{t}_3}{6}\|y_{t+1} - y_t\|^3 +\frac{\alpha_t}{A_t}\langle \nabla f(x_{t+1}), y_{t+1}- y_t  \rangle - err_{t}^{v}-err_{t+1}^{x}\aa{- err_{t}^{\tau}}.
        \end{gathered}
    \end{equation}
 
    To complete the induction step, we show, that the sum of all terms in the RHS except $\frac{f(x_{t+1})}{A_t}$  is non-negative (except err). 
    
    Lemma \ref{lm:scalar_lb_cases} provides the lower bound for $\langle \nabla f(x_{t+1}), v_t - x_{t+1}\rangle$. Let us consider the case when the minimum in the RHS of \eqref{eq:scalar_lb_cases} is attained at the first term. 
    By Lemma \ref{lem:dual} with the following choice of the parameters
    $$z = y_t - y_{t+1}, ~~ v = \frac{\alpha_t}{A_t}\nabla f(x_{t+1}), ~~ \theta = \bk_i^{t},$$
    we have 
    \begin{equation}\label{eq:1case}
        \frac{\bk_2^{t}}{2}\|y_t -y_{t+1}\|^2 + \frac{\alpha_t}{A_t}\langle \nabla f(x_{t+1}), y_{t+1} - y_t \rangle \geq - \frac{1}{2}\left( \frac{\|\frac{\alpha_t}{A_t}\nabla f(x_{t+1})\|^2}{\bk_2^{t}} \right).
    \end{equation}
    Hence,
    \begin{gather*}
         \frac{1}{A_t}\langle \nabla f(x_{t+1}), v_t - x_{t+1}\rangle + \frac{ \bk_2^{t}}{2}\|y_{t+1} - y_t\|^2 + \frac{\alpha_t}{A_t}\langle \nabla f(x_{t+1}), y_{t+1} - {y_t}\rangle \\
        \stackrel{\eqref{eq:1case}}{\geq}  \frac{1}{A_t}\langle \nabla f(x_{t+1}), v_t - x_{t+1}\rangle - \frac{\|\frac{\alpha_t}{A_t}\nabla f(x_{t+1})\|^i}{2\bk_2^{t}}  \\
        \stackrel{\eqref{eq:scalar_lb_cases}}{\geq}
        \frac{1}{A_t}\|\nabla f(x_{t+1})\|^2 \left( \frac{1}{4\bar{\delta}_t}\right) - \frac{\|\frac{\alpha_t}{A_t}\nabla f(x_{t+1})\|^2}{2\bk_2^{t+1}} - \frac{2}{A_t \bar{\delta}_{t}}\|g(v_t) - \nabla f(v_t) \|^2 - \frac{\tau^2}{A_t \bar{\delta}_t} \\
         \geq - \frac{2}{A_t \bar{\delta}_{t}}\|g(v_t) - \nabla f(v_t) \|^2 - \frac{\tau^2}{A_t \bar{\delta}_t}, 
    \end{gather*}
    where the last inequality holds by our choice of the parameters
    \begin{equation}\label{eq:bar_kappa}
        \bk_2^{{t+1}} \geq  \frac{2\bar{\delta}_t\al_t^2}{A_t} .
    \end{equation}

    Next, we consider the case when the minimum in the RHS of \eqref{eq:scalar_lb_cases} is achieved on the second term. Again, by Lemma \ref{lem:dual} with the same choice of $z, v$ and with $\theta = \frac{\bk_{3}^{t}}{2}$, we have
    \begin{equation}
        \label{eq:2case}
        \frac{\bk_{3}^{t}}{6}\|y_t -y_{t+1}\|^3 + \frac{\alpha_t}{A_t}\langle \nabla f(x_{t+1}), y_{t+1} - {y_t} \rangle \geq - \frac{2}{3}\left( \frac{2\|\frac{\alpha_t}{A_t}\nabla f(x_{t+1})\|^{3}}{\bk_{3}^{t}} \right)^\frac{1}{2}.
    \end{equation}

   Hence, we get
    \begin{gather*}
        \frac{1}{A_t}\langle \nabla f(x_{t+1}), v_t - x_{t+1}\rangle + \frac{\bk_{3}^{t}}{6}\|y_{t+1} - y_t\|^3 + \frac{\alpha_t}{A_t}\langle \nabla f(x_{t+1}), y_{t+1} - {y_t}\rangle\\
        \stackrel{\eqref{eq:2case}}{\geq}  \frac{1}{A_t}\langle \nabla f(x_{t+1}), v_t - x_{t+1}\rangle  - \frac{2}{3}\left( \frac{2\|\frac{\alpha_t}{A_t}\nabla f(x_{t+1})\|^{3}}{\bk_{3}^{t}} \right)^\frac{1}{2}\\
        \stackrel{\eqref{eq:scalar_lb_cases}}{\geq}
         \frac{1}{A_t}\|\nabla f(x_{t+1})\|^\frac{3}{2}
        \left( \frac{1}{3M}\right)^\frac{1}{2} - \frac{2}{3}\left( \frac{2\|\frac{\alpha_t}{A_t}\nabla f(x_{t+1})\|^{3}}{\bk_{3}^{t}} \right)^\frac{1}{2} - \frac{2}{A_t \bar{\delta}_{t}}\|g(v_t) - \nabla f(v_t) \|^2 - \frac{\tau^2}{A_t \bar{\delta}_t}\\
         \geq - \frac{2}{A_t \bar{\delta}_{t}}\|g(v_t) - \nabla f(v_t) \|^2 - \frac{\tau^2}{A_t \bar{\delta}_t},
    \end{gather*}
    where the last inequality holds by our choice of $\bk_{3}^{t}$:
    \begin{equation}\label{eq:bar_kappa_3}
        \bk_{3}^t \geq \frac{8M}{3} \frac{\alpha_t^{3}}{A_t}.
    \end{equation}
    
    As a result, we unite both cases and get 
        \begin{gather*}
            \psi^{\ast}_{t+1} \geq \frac{f(x_{t+1})}{A_{t}} + \frac{1}{A_t}\langle \nabla f(x_{t+1}), v_t- x_{t+1}\rangle 
            + \frac{ \bk^{t}_2}{2}\|y_{t+1} - y_t\|^2 \\
            +  \frac{ \bk^{t}_3}{6}\|y_{t+1} - y_t\|^3 +\frac{\alpha_t}{A_t}\langle \nabla f(x_{t+1}), y_{t+1}- y_t  \rangle - err_{t}^{v}-err_{t+1}^{x}\aa{- err_{t}^{\tau}}\\
            \geq \frac{f(x_{t+1})}{A_{t}} - \frac{\aa{2}}{A_t \bar{\delta}_{t}}\|g(v_t) - \nabla f(v_t) \|^2 - \aa{\frac{\tau^2}{A_t \bar{\delta}_{t}}}
            - err_{t}^{v}-err_{t+1}^{x}\\
            = \frac{f(x_{t+1})}{A_{t}} 
            - err_{t+1}^{v}-err_{t+1}^{x}\aa{- err_{t+1}^{\tau}}
        \end{gather*}

    To sum up, by our choice of the parameters $\bk_{i}^{t}$, $i=2,3$, we prove the induction step.
    
\end{proof}

Finally, we are in a position to prove the convergence rate theorem The proof uses the following technical assumption.
Let $R$ be such that
\begin{equation}
\label{R_teorem3}
\|x_0 - x^{\ast}\| \leq R.
\end{equation}

\begin{theorem}
    \label{thm:2_ord_app}
     Let Assumption \ref{as:lip} hold and $M \geq 4L_2$. Let Assumption \ref{as:2ord_stoch_grad_inexact_hess} hold. After $T \geq 1$ with parameters defined in~\eqref{eq:params} and 
            $\sigma_2 = \delta_2 = \max \limits_{t=1, \ldots, T} \delta_t^{v_{t-1}, x_{t}}$
            we get the following bound for the objective residual
            \begin{equation*}
            \begin{aligned}
               \E \left[ f(x_{T}) - f(x^{\ast}) \right]  
               &\leq 
              \frac{10 \tau R}{\sqrt{T+2}} + \frac{19 \sigma_1 R}{\sqrt{T+1}} + \frac{18 \delta_2 R^{2}}{(T+3)^{2}}  
                + \frac{20 L_2 R^{3}}{(T+1)^{3}}.
            \end{aligned}
            \end{equation*}
\end{theorem}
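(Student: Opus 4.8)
The plan is to combine the two-sided control of the estimating sequence $\psi_T$ that the preceding lemmas already provide. Since $\psi_T^{\ast} = \min_x \psi_T(x) \le \psi_T(x^{\ast})$, chaining the lower bound from Lemma~\ref{lem:step} with the upper bound from Lemma~\ref{lem:upper_seq} gives
\begin{equation*}
\frac{f(x_T)}{A_{T-1}} - err_T^{v} - err_T^{x} - err_T^{\tau} \le \frac{f(x^{\ast})}{A_{T-1}} + \frac{\bk_2^T + \lambda_T}{2}\|x^{\ast}-x_0\|^2 + \frac{\bk_3^T}{6}\|x^{\ast}-x_0\|^3 + err_T^{up}.
\end{equation*}
Rearranging, using $\|x^{\ast}-x_0\|\le R$ from \eqref{R_teorem3}, and multiplying by $A_{T-1}$ yields a deterministic bound on $f(x_T)-f(x^{\ast})$ in terms of the four error quantities and the two regularization terms. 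I would then take expectations of this inequality, so that everything reduces to estimating $\E[err_T^{up}]$, $\E[err_T^{v}]$, $\E[err_T^{x}]$ and the deterministic $err_T^{\tau}$.

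First I would dispatch the noise-linear terms. Conditioning on the $\sigma$-algebra $\mathcal F_j$ generated up to the computation of $x_{j+1}$ (so that $x_{j+1}$ and $y_j$ are measurable before the fresh sample defining $g(x_{j+1})$ is drawn), unbiasedness in Assumption~\ref{as:2ord_stoch_grad_inexact_hess} gives $\E[\langle g(x_{j+1})-\nabla f(x_{j+1}),\,\cdot\,\rangle\mid \mathcal F_j]=0$ for both the $err_T^{up}$ sum in \eqref{eq:err_up} and the inner-product part of $err_T^{x}$ in \eqref{eq:err_x}; by the tower rule these have zero mean. The remaining quadratic pieces are controlled by the variance bound $\E\|g(\cdot)-\nabla f(\cdot)\|^2\le\sigma_1^2$: this bounds $\E[err_T^{v}]\le\sum_{j=0}^{T-1}\tfrac{2\sigma_1^2}{A_j\bar\delta_j}$ from \eqref{eq:err_v} and the quadratic part of $\E[err_T^{x}]$ by $\sum_{j=0}^{T-1}\tfrac{\alpha_j^2\sigma_1^2}{2A_j^2\lambda_j}$. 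The Hessian inexactness enters only through the precondition $\bar\delta_t\ge 4\delta_2^t$ of Lemma~\ref{lm:scalar_lb_cases}, which the choice $\bar\delta_t = 2\sigma_2 + \tfrac{\sigma_1+\tau}{R}(t+3)^{3/2}$ with $\sigma_2=\delta_2=\max_t\delta_t^{v_{t-1},x_t}$ and $M\ge 4L_2$ is set up to guarantee.

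The computational heart is substituting the parameters \eqref{eq:params} and using the closed form $A_t = \prod_{j=1}^t\tfrac{j}{j+3} = \tfrac{6}{(t+1)(t+2)(t+3)}$ coming from \eqref{eq:alphas}. I would then evaluate each surviving term after multiplication by $A_{T-1}=\tfrac{6}{T(T+1)(T+2)}$: the quadratic regularizer collapses as $A_{T-1}\tfrac{\bk_2^T}{2}R^2 = \bar\delta_{T-1}\alpha_{T-1}^2 R^2$, producing the Hessian term $\tfrac{18\sigma_2R^2}{(T+2)^2}$ together with a $\tfrac{9(\sigma_1+\tau)R}{\sqrt{T+2}}$ contribution; the cubic regularizer $A_{T-1}\tfrac{\bk_3^T}{6}R^3 = \tfrac{4M}{9}\tfrac{A_{T-1}}{A_T}\alpha_T^3 R^3$ produces the exact $O(MR^3/T^3)$ term; and $A_{T-1}\tfrac{\lambda_T}{2}R^2$, together with $A_{T-1}\E[err_T^{v}]$, $A_{T-1}\E[err_T^{x}]$ and $A_{T-1}err_T^{\tau}$, each produce an $O(\sigma_1 R/\sqrt T)$ or $O(\tau R/\sqrt T)$ term once the sums $\sum_{j}\tfrac{1}{A_j\bar\delta_j}$ and $\sum_j\tfrac{\alpha_j^2}{A_j^2\lambda_j}$ are estimated.

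The main obstacle is the bookkeeping in this last step. Each error sum has a summand growing like $j^{3/2}$ (using $\bar\delta_j\ge\tfrac{\sigma_1}{R}(j+3)^{3/2}$, respectively $\ge\tfrac{\tau}{R}(j+3)^{3/2}$, and $\lambda_j=\tfrac{\sigma_1}{R}(j+3)^{5/2}$), so I must bound $\sum_{j=0}^{T-1}j^{3/2} = O(T^{5/2})$ and combine with $A_{T-1}\sim 6/T^3$ to extract the $1/\sqrt T$ rate, while tracking constants tightly enough to reach the explicit coefficients $10,19,18,20$ and to absorb the index shifts ($t+1,t+2,t+3$) into the stated denominators. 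No single estimate is deep, but assembling the four $\sigma_1 R/\sqrt T$ contributions (from $\lambda_T$, $err^v$, $err^x$, and the $\bk_2$ split) into the single coefficient $19$, and the two $\tau R/\sqrt T$ contributions into $10$, is where the care is required.
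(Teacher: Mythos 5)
Your proposal follows essentially the same route as the paper's proof: sandwiching $\psi_T^{\ast}$ between the lower bound of Lemma~\ref{lem:step} and the upper bound of Lemma~\ref{lem:upper_seq}, killing the linear noise terms ($err^{up}$ and the inner-product part of $err^{x}$) by unbiasedness and the tower rule, bounding the quadratic error terms by $\sigma_1^2$, and then substituting the parameters \eqref{eq:params} together with the closed form of $A_t$ and the summation bound $\sum_t A_T\alpha_t^{i}/A_t \le 3^{i}/(T+3)^{i-1}$ to extract each rate. The remaining work you defer is exactly the constant bookkeeping the paper carries out (and where the paper itself is not fully consistent between its proof's final display and the theorem's stated constants), so there is no substantive gap in your argument.
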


\begin{proof}  
        First of all, let us bound $A_T$.
        \begin{equation}
        \label{alpha_t}
            \alpha_t = \frac{3}{t+3}, ~ t \geq 1.
        \end{equation}
        Then, we have
        \begin{equation}\label{eq:A_t_bound}
            A_{T} = 
            \prod_{t=1}^{T}\left(1-\alpha_{t}\right)=\prod_{t=1}^{T} \frac{t}{t+3}=\frac{T !3 !}{(T+3) !}=\frac{6}{(T+1)(T+2)(T+3)}.
         \end{equation}
         And from \cite{agafonov2023inexact} we get
         \begin{equation}
            \label{eq:alpha_sum_bound}
             \sum_{t=0}^{T} \frac{A_{T} \alpha_{t}^{i}}{A_{t}} 
            \leq\frac{3^{i}}{(T+3)^{i-1}}
         \end{equation}
        
        From Lemmas \ref{lem:step} and \ref{lem:upper_seq}, we obtain that, for all $t \geq 1$,
    \begin{gather*}
       \frac{f(x_{t+1})}{A_t} 
            - err_{t+1}^{v}-err_{t+1}^{x} \aa{- err_{t+1}^{\tau}} \stackrel{\eqref{eq:lemma_ass}}{\leq} \psi_{t+1}^{\ast} 
            \leq \psi_{t+1}(x^{\ast}) \\
            \stackrel{\eqref{eq:acc_upper_bound}}{\leq}
       \frac{f(x^{\ast})}{A_{t}} + \frac{ \bk^{t}_2 + \lambda_t}{2}\|x^{\ast} - x_0\|^2  + \frac{\bk^{t}_{3}}{6}\|x^{\ast} - x_0\|^3 + err_{t+1}^{up}.
    \end{gather*}
    Next, we apply expectation
    \begin{equation} 
    \label{eq:final_sum}
        \E \left[ f(x_{T+1}) - f(x^{\ast}) \right]
        \leq 
        A_T \E \left[ \frac{\bk_2^T + \lambda_T}{2}R^2 + \frac{\bk_3^T}{6}R^3 + err_{T+1}^{up} + err_{T+1}^v + err_{T+1}^x + err_{T+1}^\tau \right].
    \end{equation}

    Let us choose 
    \begin{gather}
        \bar{\delta}_t = \delta_2 + \frac{\tau+\sigma}{R}(t+3)^{3/2}, \label{eq:delta}
        \\ 
        \lambda_t = \frac{\sigma}{R}(t+3)^{5/2}. 
        \label{eq:lambda}
    \end{gather}
    Then, we bound terms in \eqref{eq:final_sum} step by step. We start from deterministic terms.
    \begin{gather*}
        A_T \E \left[ \frac{\bk_2^T + \lambda_T}{2}R^2 + \frac{\bk_3^T}{6}R^3\right] 
        \stackrel{\eqref{eq:params}}{=} 
        \frac{18\bar{\delta}_TR^2}{(T+3)^2}
        + 
        \frac{6\lambda_T R^2}{(T+3)^3}
        + 
        \frac{72MR^3}{(T+3)^3}
        \\
        \stackrel{\eqref{eq:delta},\eqref{eq:lambda}}{=}
        \frac{18\tau R}{(T+3)^{1/2}}
        +  \frac{18\sigma R}{(T+3)^{1/2}} + \frac{18\delta_2 R^2}{(T+3)^2} + \frac{6\sigma R}{(T+3)^{1/2}} +  \frac{72MR^3}{(T+3)^3}.
    \end{gather*}   
    Now, we bound expectation of all error terms. Firstly, we consider $err_{T+1}^{up}$
    \begin{gather*}
        A_T \E \left[err_{T+1}^{up}\right] \stackrel{\eqref{eq:err_up}}{=}  A_T \E \left[ \sum \limits_{j = 0}^{T} \frac{\alpha_j}{A_j} \la g(x_{j+1}) - \nabla f(x_{j+1}) , x^{\ast} - x_{j+1}\ra \right] = 0.
    \end{gather*}
    Next, we bound $A_T \E \left[err_{T+1}^{v}\right]$
    \begin{gather*}
         A_T \E \left[err_{T+1}^{v}\right] 
         \stackrel{\eqref{eq:err_v}}{=} 
         A_T \E \left[ 
            \sum \limits_{j = 0}^{T}  \frac{\aa{2}}{A_j \bar{\delta}_j}\|g(v_j) - \nabla f(v_j) \|^2 
        \right]
        \leq 
        2\sigma^2 \sum \limits_{j=0}^{T} \frac{A_T}{A_j \bar{\delta}_j}\\
        \stackrel{\eqref{alpha_t}, \eqref{eq:delta}}{=} \frac{2\sigma R}{3^{3/2}} \sum \limits_{j=0}^{T} \frac{A_T \alpha_j^{3/2}}{A_j} \stackrel{\eqref{eq:alpha_sum_bound}}{\leq} \frac{2\sigma R}{(T+3)^{1/2}}
    \end{gather*}
    
    Now we calculate $A_T \E \left[err_{T+1}^{x}\right]$
    
    \begin{gather*}
        A_T \E \left[err_{T+1}^{x}\right] 
        \stackrel{\eqref{eq:err_x}}{=} 
        A_T \E \left[ 
            \sum \limits_{j = 0}^{T}  \frac{\alpha_j^2}{2A_j^2\lambda_j} \|g(x_{j+1})-\nabla f(x_{j+1}) \|^2 
            + 
            \sum \limits_{j = 0}^{T}  \frac{\alpha_j}{A_j} \la g(x_{j+1}) -\nabla f(x_{j+1}), y_{j} - x_{j+1}\ra
        \right]
        \end{gather*}
    
    \begin{gather*} 
        \stackrel{\eqref{eq:lambda}}{=} 
        \frac{\sigma R}{2} \sum \limits_{j = 0}^{T} \frac{A_T \alpha_j^2}{A_j^2 (j+3)^{5/2}} 
        \stackrel{\eqref{alpha_t}}{=} 
        \frac{\sigma R}{3^{5/2} 2} \sum \limits_{j = 0}^{T} \frac{A_T \alpha_j^{9/2}}{A_j^2} \leq \frac{\sigma R}{3^{5/2} 2A_T} \sum \limits_{j = 0}^{T} \frac{A_T \alpha_j^{9/2}}{A_j}
        \\
        \stackrel{\eqref{eq:alpha_sum_bound}}{\leq} \frac{3\sigma R}{ 2A_T (T+3)^{7/2}} 
        \stackrel{\eqref{eq:A_t_bound}}{\leq} \frac{\sigma R}{ 4 (T+3)^{1/2}}.
    \end{gather*}

    Finally, we consider $err_{T+1}^\tau$
    \begin{gather*}
        A_T \E \left[err_{T+1}^{\tau}\right] 
        \stackrel{\eqref{eq:err_tau}}{=} 
        \sum \limits_{j = 0}^{T}  \frac{A_T\tau^2}{A_j \bar{\delta}_j} 
        \stackrel{\eqref{eq:delta}}{=} 
        \frac{\tau R}{3^{3/2}} \sum \limits \frac{A_T \alpha_t^{3/2}}{A_j} 
        \stackrel{\eqref{eq:alpha_sum_bound}}{\leq}
        \frac{\tau R}{(T+3)^{1/2}}.
    \end{gather*}

    Combining all bounds from above we achieve convergence rate
    \begin{gather*}
         \E \left[ f(x_{T+1}) - f(x^{\ast}) \right]
        \leq \frac{19 \tau R}{(T+3)^{1/2}} + \frac{27 \sigma R}{(T+3)^{1/2}} + \frac{18\delta_2 R^2}{(T+3)^2} + \frac{72 MR^3}{(T+3)^3}.
    \end{gather*}

\end{proof}

The case of stochastic Hessian (Theorem~\ref{thm:acc_convergence} under Assumption~\ref{as:2ord_stoch_grad_inexact_hess}) can be obtained in the same way by taking expectation in Lemma~\ref{lm:scalar_lb_cases}.

\section{Proof of Theorem \ref{thm:acc_convergence_p_ord}}
Algorithm~\ref{alg:inexact_acc_detailed_p_ord} is the tensor generalization of Algorithm~\ref{alg:inexact_acc_detailed}. So, we will follow the same steps as in Appendix~\ref{sec:proof_2_ord}.

First of all, we provide tensor counterpart of Lemmas~\ref{lem:upper_seq},~\ref{lem:cubic_bound_acc},~\ref{lm:scalar_lb_cases}. The proofs directly follow the proofs of Lemmas for $2$-nd order case.

\begin{lemma}\label{lem:upper_seq_p}
    For convex function $f(x)$ and $\psi_t(x)$, we have
    \begin{equation}
        \label{eq:acc_upper_bound_p}
        \psi_t(x^{\ast}) \leq \frac{f(x^{\ast})}{A_{t - 1}}  + \frac{ \bk_2^{t} + \lambda_t}{2} \|x^{\ast} - x_0\|^2   + \sum \limits_{i=3}^{p+1}\frac{\bk^{t}_{i}}{i!}\|x^{\ast} - x_0\|^i + err_{t}^{up},
        \end{equation}
    where
    \begin{equation}
        err^{up}_t = \sum \limits_{j = 0}^{t - 1} \frac{\alpha_j}{A_j} \la g(x_{j+1}) - \nabla f(x_{j+1}) , x^{\ast} - x_{j+1}\ra
    \end{equation}
\end{lemma}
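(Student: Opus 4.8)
The plan is to reproduce the argument of Lemma~\ref{lem:upper_seq} almost verbatim; the only modification is that the scalar estimating sequence now carries every power from $2$ up to $p+1$ rather than just the quadratic and cubic terms. First I would unfold the recursive definition of $\psi_{t+1}$ against the base function $\psi_0$. Because each update adds $\tfrac{\lambda_{t+1}-\lambda_t}{2}\|x-x_0\|^2$ together with the increments $\tfrac{\bk_i^{t+1}-\bk_i^t}{i!}\|x-x_0\|^i$, all of these telescope, so that at step $t$ one obtains exactly
\begin{equation*}
\psi_t(x^{\ast}) = \frac{\bk_2^t + \lambda_t}{2}\|x^{\ast}-x_0\|^2 + \sum_{i=3}^{p+1}\frac{\bk_i^t}{i!}\|x^{\ast}-x_0\|^i + \sum_{j=0}^{t-1}\frac{\alpha_j}{A_j}\,l(x^{\ast},x_{j+1}).
\end{equation*}
The base case $t=0$ is handled, as before, by adopting the convention $1/A_{-1}=0$, which forces $f(x^{\ast})/A_{-1}=0$ and makes $err_0^{up}$ an empty sum, so that \eqref{eq:acc_upper_bound_p} holds with equality.

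The only genuinely structural ingredient is the aggregation of the linear pieces. From \eqref{eq:p_ord_alphas} we have $A_j = A_{j-1}(1-\alpha_j)$, hence $\tfrac{\alpha_j}{A_j} = \tfrac{1}{A_j}-\tfrac{1}{A_{j-1}}$, and the telescoping sum gives $\sum_{j=0}^{t-1}\tfrac{\alpha_j}{A_j} = \tfrac{1}{A_{t-1}}$ (again using $1/A_{-1}=0$). Next I would split each stochastic linear model into its exact counterpart plus the gradient-noise defect,
\begin{equation*}
l(x^{\ast},x_{j+1}) = \bar l(x^{\ast},x_{j+1}) + \la g(x_{j+1})-\nabla f(x_{j+1}),\, x^{\ast}-x_{j+1}\ra,
\end{equation*}
where $\bar l(x,y)=f(y)+\la \nabla f(y), x-y\ra$. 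Convexity of $f$ yields $\bar l(x^{\ast},x_{j+1})\le f(x^{\ast})$, so that
\begin{equation*}
\sum_{j=0}^{t-1}\frac{\alpha_j}{A_j}\,l(x^{\ast},x_{j+1}) \le f(x^{\ast})\sum_{j=0}^{t-1}\frac{\alpha_j}{A_j} + err_t^{up} = \frac{f(x^{\ast})}{A_{t-1}} + err_t^{up},
\end{equation*}
with $err_t^{up}$ exactly the weighted sum of the noise defects. Substituting this bound into the unfolded expression for $\psi_t(x^{\ast})$ yields \eqref{eq:acc_upper_bound_p}.

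I expect no real obstacle here: the proof is purely a bookkeeping extension of the second-order case, and the higher-order prox terms $\|x^{\ast}-x_0\|^i$ for $4\le i\le p+1$ play no active role beyond being carried passively through the telescoping, since the unbiasedness of the gradient is what isolates the linear defect into $err_t^{up}$. The single point requiring care is matching the coefficients $\tfrac{1}{i!}$ of the prox-powers across the definition of $\psi_0$, the increments in the update rule, and the claimed bound, so that the telescoped regularizer lands on precisely $\sum_{i=3}^{p+1}\tfrac{\bk_i^t}{i!}\|x^{\ast}-x_0\|^i$ and not on a rescaled version of it.
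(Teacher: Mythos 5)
Your proposal is correct and coincides with the paper's intended argument: the appendix explicitly states that Lemma~\ref{lem:upper_seq_p} is proved by directly following the second-order proof of Lemma~\ref{lem:upper_seq}, which is exactly the telescoping-plus-convexity bookkeeping you carry out. Your closing remark about matching the $\tfrac{1}{i!}$ normalizations across $\psi_0$, the increments, and the claimed bound is well placed, since the paper's own displays are not entirely consistent on this point.
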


\begin{lemma}
\label{lem:cubic_bound_acc_p}
For the function $f(x)$ with $L_p$-Lipschitz-continuous Hessian and $G_i(x_t)$ is $ \delta_i^t$-inexact $i$-th order derivative for $v_t,x_{t+1} \in \R^d$ we have
\begin{equation}
\label{eq:grad_model_bound_p}
    \|\nabla \phi_{v_t}(x_{t+1}) - \nabla f(x_{t+1})\| 
        \leq
         \sum \limits_{i=2}^p\frac{\delta_i^t}{(i-1)!} \|x_{t+1} - v_t\|^i + \frac{L_p}{p!}\|x_{t+1} - v_t\|^{p+1} +  \|g(v_t) - \nabla f(v_t) \|,
\end{equation}
where we denote $\delta_2^t = \delta_t^{v_t, x_{t+1}} $ to simplify the notation.
\end{lemma}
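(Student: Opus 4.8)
The plan is to mirror the proof of the second-order counterpart, Lemma~\ref{lem:cubic_bound_acc}, and reduce everything to the already-established approximation bound in Lemma~\ref{lm:p_ord_bound}. First I would apply Lemma~\ref{lm:p_ord_bound} directly with $x = v_t$ and $y = x_{t+1}$, which gives
\[
\|\nabla f(x_{t+1}) - \nabla \phi_{v_t,p}(x_{t+1})\| \le \sum_{i=1}^p \tfrac{1}{(i-1)!}\|(G_i(v_t) - \nabla^i f(v_t))[x_{t+1}-v_t]^{i-1}\| + \tfrac{L_p}{p!}\|x_{t+1}-v_t\|^p.
\]
This already separates the contribution of each inexact derivative from the Lipschitz (Taylor) remainder, so no new estimate on the model is needed; the remaining work is purely bookkeeping of the individual summands. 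Equivalently, one could insert the exact Taylor gradient $\nabla \Phi_{v_t,p}(x_{t+1})$ by the triangle inequality and bound the ``inexactness'' part and the ``remainder'' part separately, exactly as the displayed chain of inequalities does in the $p=2$ case.

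Second I would peel off the $i=1$ summand. Since $G_1(v_t) = g(v_t)$ and the action $[x_{t+1}-v_t]^{0}$ is trivial, this term is exactly $\|g(v_t) - \nabla f(v_t)\|$, the stochastic-gradient error, which I keep intact (it is controlled in expectation later rather than pointwise, via the unbiasedness in Assumption~\ref{as:p_ord_stoch_grad_inexact_hess}). For the remaining indices $i = 2, \ldots, p$ I would invoke the directional inexactness of Assumption~\ref{as:p_ord_stoch_grad_inexact_hess} along the direction $x_{t+1} - v_t$, namely $\|(G_i(v_t) - \nabla^i f(v_t))[x_{t+1}-v_t]^{i-1}\| \le \delta_i^t \|x_{t+1}-v_t\|^{i-1}$ with the shorthand $\delta_i^t = \delta_{i,t}^{v_t, x_{t+1}}$, so that the $i$-th summand becomes $\tfrac{\delta_i^t}{(i-1)!}\|x_{t+1}-v_t\|^{i-1}$. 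Collecting the gradient term, the $\delta_i$-terms, and the $L_p$-remainder yields the claimed inequality.

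There is essentially no hard step here: the statement is a direct specialization of Lemma~\ref{lm:p_ord_bound} combined with the directional inexactness assumption, structurally identical to the $p=2$ derivation. The single point that requires care — and which I would cross-check against the second-order version — is the exponents. Carrying out the above produces $\tfrac{\delta_i^t}{(i-1)!}\|x_{t+1}-v_t\|^{i-1}$ and a remainder $\tfrac{L_p}{p!}\|x_{t+1}-v_t\|^{p}$, which for $p=2$ collapses to $\delta_2^t\|x_{t+1}-v_t\| + \tfrac{L_2}{2}\|x_{t+1}-v_t\|^{2}$, in agreement with Lemma~\ref{lem:cubic_bound_acc}. The displayed statement writes $\|x_{t+1}-v_t\|^i$ and $\|x_{t+1}-v_t\|^{p+1}$, which appears to be an off-by-one typo relative to this derivation; I would report the lower exponents $i-1$ and $p$, as these are the ones that feed correctly into the subsequent progress estimate (the tensor analogue of Lemma~\ref{lm:scalar_lb_cases}).
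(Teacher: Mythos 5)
Your proof is correct and matches the paper's (implicit) argument: the paper states only that this lemma "directly follows" the second-order case, and that is exactly the specialization of Lemma~\ref{lm:p_ord_bound} at $x=v_t$, $y=x_{t+1}$ combined with the directional inexactness of Assumption~\ref{as:p_ord_stoch_grad_inexact_hess} that you carry out. You are also right that the exponents in the displayed bound \eqref{eq:grad_model_bound_p} are off by one --- the derivation yields $\|x_{t+1}-v_t\|^{i-1}$ and $\|x_{t+1}-v_t\|^{p}$, which is what reduces to Lemma~\ref{lem:cubic_bound_acc} at $p=2$ and what the subsequent estimate involving $\zeta_{t+1}$ in the proof of Lemma~\ref{lm:scalar_lb_cases_p} actually uses.
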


\begin{lemma}
    \label{lm:scalar_lb_cases_p}
    Let $\{x_t, v_t\}_{t \geq 1}$ be generated by Algorithm \ref{alg:inexact_acc_detailed}. Then, for any $\bar{\delta}_t\geq 4{\delta}_2^t$, $\eta_i \geq 4$, $M \geq \tfrac{2}{p}L_p$  and $x_{t+1} = S_{M,\bar{\delta}_t}(v_{t})$, the following holds
    \begin{equation}
    \begin{gathered}\label{eq:scalar_lb_cases_p}
        \aa{\frac{\tau^2}{\delta_t}} + \frac{\aa{2}}{\delta_{t}}\|g(v_t) - \nabla f(v_t) \|^2 + \langle \nabla f(x_{t+1}), v_t - x_{t+1} \rangle 
        \\
        \geq 
         \min \left\{
            \frac{2}{p}\|\nabla f(x_{t+1})\|^\frac{p+1}{p} \ls \frac{(p-1)!}{M}\rs^\frac{1}{p};
            ~ 
                 \tfrac{1}{4\bar{\delta}_t }
        \|\nabla f(x_{t+1})\|^2
            ; 
        \right.
        \\
        \left.
             \min \limits_{i=3,\ldots, p} \ls
                 \frac{2}{p}\|\nabla f(x_{t+1})\|^\frac{i}{i-1}\ls \frac{(i-1)!}{\eta_i \delta_i^t}\rs^\frac{1}{i-1}
            \rs
        \right\}.
    \end{gathered}
    \end{equation}
\end{lemma}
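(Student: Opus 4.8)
The plan is to mirror the second-order argument of Lemma~\ref{lm:scalar_lb_cases}, replacing the single cubic regularizer by the full collection of power regularizers appearing in $\omega_{x,p}^{M,\bar\delta}$. First I would introduce $r_{t+1}=\|x_{t+1}-v_t\|$ and collect the gradient of the regularization part of the model into a single scalar multiplier
\begin{equation*}
\zeta_{t+1} = \bar\delta_t + \sum_{i=3}^p \frac{\eta_i\delta_i^t}{(i-1)!}r_{t+1}^{i-2} + \frac{M}{(p-1)!}r_{t+1}^{p-1},
\end{equation*}
so that $\nabla\omega_{v_t,p}^{M,\bar\delta_t}(x_{t+1}) = \nabla\phi_{v_t,p}(x_{t+1}) + \zeta_{t+1}(x_{t+1}-v_t)$, using $\tfrac{pM}{p!}=\tfrac{M}{(p-1)!}$. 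By Definition~\ref{def:inexact_sub_p} the norm of this quantity is at most $\tau$, which after squaring and the elementary bound $\|a+b\|^2\le 2\|a\|^2+2\|b\|^2$ yields $\|\nabla f(x_{t+1})+\zeta_{t+1}(x_{t+1}-v_t)\|^2 \le 2\|\nabla\phi_{v_t,p}(x_{t+1})-\nabla f(x_{t+1})\|^2 + 2\tau^2$.

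Next I would substitute the first-order model error bound of Lemma~\ref{lem:cubic_bound_acc_p}, peel off the stochastic gradient error by $(a+b)^2\le 2a^2+2b^2$, expand the square on the left, and divide by $2\zeta_{t+1}$. Exactly as in the second-order case this produces
\begin{equation*}
\begin{aligned}
&\frac{\tau^2}{\zeta_{t+1}} + \frac{2}{\zeta_{t+1}}\|g(v_t)-\nabla f(v_t)\|^2 + \langle\nabla f(x_{t+1}),v_t-x_{t+1}\rangle \\
&\qquad\ge \frac{\|\nabla f(x_{t+1})\|^2}{2\zeta_{t+1}} + \frac{r_{t+1}^2}{2\zeta_{t+1}}\left[\zeta_{t+1}^2 - 4\Big(\sum_{i=2}^p\tfrac{\delta_i^t}{(i-1)!}r_{t+1}^{i-2} + \tfrac{L_p}{p!}r_{t+1}^{p-1}\Big)^2\right].
\end{aligned}
\end{equation*}
Since $\zeta_{t+1}\ge\bar\delta_t$, on the left I may replace each $\zeta_{t+1}$ in the denominator by $\bar\delta_t$ (written $\delta_t$ in the statement), which only enlarges the left-hand side. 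The bracket is shown to be nonnegative by a term-by-term comparison: using $\bar\delta_t\ge 4\delta_2^t$, $\eta_i\ge 4$, and $M\ge\frac{2}{p}L_p$ each summand of $\zeta_{t+1}r_{t+1}$ dominates twice the matching summand of the subtracted polynomial, so $\zeta_{t+1}r_{t+1}\ge 2(\cdots)$ and hence the bracket is $\ge 0$.

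The core of the argument is then a $p$-way case analysis according to which of the $p$ summands of $\zeta_{t+1}$ is largest; the chosen summand satisfies $\zeta_{t+1}\le p\cdot(\text{that summand})$. In the case where the $\bar\delta_t$ term dominates, the power of $r_{t+1}$ is zero and I obtain directly the branch $\frac{1}{4\bar\delta_t}\|\nabla f(x_{t+1})\|^2$. In every other case I retain a strictly positive fraction of the bracket, which after invoking the lower bounds on $\eta_i$ and $M$ leaves a clean positive multiple of $\delta_i^t r_{t+1}^{i}$ (resp.\ $Mr_{t+1}^{p+1}$); combining it with the main term $\tfrac{\|\nabla f(x_{t+1})\|^2}{2\zeta_{t+1}}$, which in this case is at least a constant multiple of $\|\nabla f(x_{t+1})\|^2/(\delta_i^t r_{t+1}^{i-2})$ (resp.\ $/(Mr_{t+1}^{p-1})$), through the weighted AM--GM inequality $\frac{a}{r^{k}}+b r^{m}\ge C\,a^{m/(k+m)}b^{k/(k+m)}$ eliminates $r_{t+1}$ and produces exactly $\|\nabla f(x_{t+1})\|^{i/(i-1)}\big(\frac{(i-1)!}{\eta_i\delta_i^t}\big)^{1/(i-1)}$ (resp.\ $\|\nabla f(x_{t+1})\|^{(p+1)/p}\big(\frac{(p-1)!}{M}\big)^{1/p}$), up to the $\tfrac{2}{p}$ prefactors. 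Taking the worst case over the $p$ branches gives the stated minimum.

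I expect the main obstacle to be bookkeeping rather than conceptual: verifying that in each of the $p$ cases enough of the bracket survives (the constant $4$ in $\eta_i\ge 4$ and $\bar\delta_t\ge4\delta_2^t$ together with the exact threshold $M\ge\frac2pL_p$ are tuned precisely so that the subtracted quartic is absorbed with positive slack), and checking that the AM--GM exponents come out to $\frac{i}{i-1}$ and $\frac{p+1}{p}$ with the correct negative powers $\delta_i^{t\,-1/(i-1)}$ and $M^{-1/p}$. This generalized Young step replaces the single explicit inequality $\frac{\alpha}{r}+\frac{\beta r^3}{3}\ge\frac43\beta^{1/4}\alpha^{3/4}$ used for $p=2$, and must be applied with matching exponents $k=i-2,\ m=i$ (resp.\ $k=p-1,\ m=p+1$) in each branch.
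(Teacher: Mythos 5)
Your proposal follows essentially the same route as the paper's proof: the same scalar $\zeta_{t+1}$ aggregating the regularizer gradients, the same use of Definition~\ref{def:inexact_sub_p} and Lemma~\ref{lem:cubic_bound_acc_p} to reach the inequality with the bracketed quartic, the same $p$-way case analysis on the dominant summand of $\zeta_{t+1}$, and the same generalized Young inequality to eliminate $r_{t+1}$ in each branch. The only notable (and harmless) difference is that your cases are indexed by the \emph{largest} summand (so $\zeta_{t+1}\le p\cdot(\text{that summand})$, which is exhaustive and consistent with the $2/p$ prefactors in the statement), whereas the paper phrases its case conditions as one summand dominating the sum of the others.
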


\begin{proof}
    For simplicity, we denote $r_{t+1} = \|x_{t+1} - v_t\|$ and
    \begin{equation}
        \label{eq:zeta_p}
        \zeta_{t+1} =  \bar{\delta}_t +\sum \limits_{i=3}^p \frac{\eta_i \delta_i}{(i-1)!}\|x_{t+1} - v_t\|^{i-2} +  \frac{M}{(p-1)!}\|x_{t+1} - v_t\|^{p-1}.
    \end{equation}
    
    By Definition \ref{def:inexact_sub_p} for $x_{t+1} = S_p^{M,\bar{\delta}_t, \tau}(v_{t})$

    \begin{equation}
    \begin{gathered}
        \tau^2 \geq \left\|\nabla \phi_{v_t, p}(x_{t+1}) + \bar{\delta}_t(x_{t+1} - v_t) + \sum \limits_{i=3}^p \frac{\eta_i \delta_i^t}{(i-1)!}\|x_{t+1} - v_t\|^{i-2}(x_{t+1} - v_t)\right. 
        \\
        \left. + \frac{M}{(p-1)!}\|x_{t+1} - v_t\|^{p-1}(x_{t+1} - v_t)\right\|^2 
        \\
        \stackrel{\eqref{eq:zeta_p}}{=}  \|\nabla \phi_{v_t, p} + \zeta_{t+1}(x_{t+1} - v_t)\|^2 .
    \end{gathered}
    \end{equation}

    From inexact solution of subproblem we get
    \begin{equation}
    \begin{gathered}
        \|\nabla f(x_{t+1}) + \zeta_{t+1}(x_{t+1} - v_t)\|^2 
        \\
        \leq \|\nabla \phi_{v_t, p}(x_{t+1}) - \nabla f(x_{t+1}) - \nabla \phi_{v_t, p}(x_{t+1}) - \zeta_{t+1}(x_{t+1}-v_t)\|^2
        \\
        \leq 2 \|\nabla \phi_{v_t, p}(x_{t+1}) - \nabla f(x_{t+1})\|^2 + 2\|\nabla \phi_{v_t, p}(x_{t+1}) + \zeta_{t+1}(x_{t+1}-v_t) \|^2
        \\
        \leq 2 \|\nabla \phi_{v_t, p}(x_{t+1}) - \nabla f(x_{t+1})\|^2 + 2\tau^2.
    \end{gathered}
    \end{equation}

    Next, from previous inequality and Lemma \ref{lem:cubic_bound_acc_p} 
    \begin{equation}
        \begin{gathered}
            4\|g(v_t) - \nabla f(v_t)\|^2 + 4 \left(\sum \limits_{i=2}^p \frac{\delta_i^t}{(i-1)!}r_{t+1}^i + \frac{L_p}{p!}r_{t+1}^{p+1} \right)^2 + 2\tau^2
            \\
            \geq 2\|\nabla \phi_{v_t, p}(x_{t+1} - v_t)\|^2 + 2\tau^2 \\
            \geq \|\nabla f(x_{t+1}) + \zeta_{t+1}(x_{t+1} - v_t)\|^2
            \\
            \geq \|\nabla f(x_{t+1})\|^2 + 2\zeta_{t+1} \langle \nabla f(x_{t+1}), x_{t+1} - v_t \rangle + \zeta_{t+1}^2\|x_{t+1} - v_t\|^2.
        \end{gathered}
    \end{equation}

    Hence, 
    \begin{gather*}
        \frac{\tau^2}{\zeta_{t+1}}  + \frac{2}{\zeta_{t+1}}\|g(v_t) - \nabla f(v_t)\|^2  + \langle \nabla f(x_{t+1}), v_t - x_{t+1} \rangle \geq
        \\
        \frac{1}{2\zeta_{t+1}}\|\nabla f(x_{t+1})\|^2 
        +\frac{1}{2\zeta_{t+1}} \ls \zeta_{t+1}^2 - 4\left(\sum \limits_{i=2}^p \frac{\delta_i^t}{(i-1)!}r_{t+1}^{i-2} + \frac{L_p}{p!}r_{t+1}^{p-1} \right)^2\rs r_{t+1}^2,
    \end{gather*}
    and finally by using definition of $\zeta_{t+1}$, we get
    \begin{gather*}
        \frac{\tau^2}{\bar{\delta}_t}  + \frac{2}{\bar{\delta}_t}\|g(v_t) - \nabla f(v_t)\|^2  + \langle \nabla f(x_{t+1}), v_t - x_{t+1} \rangle \geq
        \\
        \frac{1}{2\zeta_{t+1}}\|\nabla f(x_{t+1})\|^2 
        +\frac{1}{2\zeta_{t+1}} \ls \zeta_{t+1}^2 - 4\left(\sum \limits_{i=2}^p \frac{\delta_i^t}{(i-1)!}r_{t+1}^{i-2} + \frac{L_p}{p!}r_{t+1}^{p-1} \right)^2\rs r_{t+1}^2.
    \end{gather*}
    Next, we consider $p$ cases depending on which term dominates in $\zeta_{t+1}$.
    \begin{itemize}[leftmargin=10pt,nolistsep]
    \item If 
    $\bar{\delta}_t \geq \sum \limits_{i=3}^p \frac{\eta_i \delta_i^t}{(i-1)!}r_{t+1}^{i-2} + \frac{M}{(p-1)!}r_{t+1}^{p-1}$, then we get the following bound
    \begin{gather*}
        \frac{\tau^2}{\bar{\delta}_t} + \frac{2}{\bar{\delta}_{t}}\|g(v_t) - \nabla f(v_t) \|^2 + 
        \langle \nabla f(x_{t+1}), v_t - x_{t+1} \rangle 
        \\
        \geq
        \frac{1}{2\zeta_{t+1}}\|\nabla f(x_{t+1}) \|^2
        + \frac{1}{2\zeta_{t+1}}\ls \zeta_{t+1}^2 - 4\left(\sum \limits_{i=2}^p \frac{\delta_i^t}{(i-1)!}r_{t+1}^{i-2} + \frac{L_p}{p!}r_{t+1}^{p-1} \right)^2 \rs r_{t+1}^2
        \\
        \geq \tfrac{1}{2p\bar{\delta}_t }
        \|\nabla f(x_{t+1})\|^2.
    \end{gather*}

    \item If $\frac{\eta_i \delta_i^t}{(i-1)!}r_{t+1}^{i-1} \geq \bar{\delta}_t + \sum \limits_{j=3, j\neq i}^p \frac{\eta_j \delta_j}{(j-1)!}r_{t+1}^{j-1} + \frac{M}{(p-1)!}r_{t+1}^{p-1}$ for $i, ~ 3 \leq i \leq p$, we get

    \begin{gather*}
        \frac{\tau^2}{\bar{\delta}_t} + 
        \frac{2}{\bar{\delta}_{t}}\|g(v_t) - \nabla f(v_t) \|^2 
        + 
        \langle \nabla f(x_{t+1}), v_t - x_{t+1} \rangle 
        \\
        \geq
        \frac{1}{2\zeta_{t+1}}\|\nabla f(x_{t+1}) \|^2
        + 
        \frac{1}{2\zeta_{t+1}}\ls \zeta_{t+1}^2 - 4\left(\sum \limits_{i=2}^p \frac{\delta_i^t}{(i-1)!}r_{t+1}^i + \frac{L_p}{p!}r_{t+1}^{p+1} \right)^2 \rs r_{t+1}^2
        \end{gather*}
    
        \begin{gather*}
        \geq
        \frac{(i-1)! \|\nabla f(x_{t+1})\|^2}{p\eta_i \delta_i^t r_{t+1}^{i-1}} 
        +
        \ls \bar{\delta}_t - 2\delta_2^t + \sum \limits_{i=3}^p \frac{\eta_i\delta_i^t - 2\delta_i^t}{(i-1)!}r_{t+1}^{i-2} + \frac{pM - 2L_p}{p!}r_{t+1}^{p-1}\rs \\
        \times 
        \ls \bar{\delta}_t + 2\delta_2^t + \sum \limits_{i=3}^p \frac{\eta_i\delta_i^t + 2\delta_i^t}{(i-1)!}r_{t+1}^{i-2} 
        + 
        \frac{pM + 2L_p}{p!}r_{t+1}^{p-1}\rs \frac{r_{t+1}^2}{\zeta_{t+1}}
        \\
        \geq 
        \frac{(i-1)! \|\nabla f(x_{t+1})\|^2}{p\eta_i \delta_i^t r_{t+1}^{i-2}} 
        +
        \frac{\eta_i \delta_i^t - 2\delta_i^t}{(i-1)!}  \frac{\eta_i \delta_i^t + 2\delta_i^t}{(i-1)!}  \frac{r_{t+1}^{i}(i-1)!}{p \eta_i \delta_i^t} 
        \\
        \geq 
        \frac{(i-1)! \|\nabla f(x_{t+1})\|^2}{p\eta_i \delta_i^t r_{t+1}^{i-2}} + \frac{\eta_i \delta_i^t}{(i-1)! p}r_{t+1}^i.
        \\
        \geq \frac{2}{i} \ls \frac{(i-2)(i-1)!\|\nabla f(x_{t+1})\|^2}{p\eta_i\delta_i^t}\rs^\frac{i}{2(i-1)}\ls \frac{i \eta_i \delta_i^t}{(i-1)! p}\rs^\frac{i-2}{2(i-1)} 
        \\
        \geq \frac{2}{p}\|\nabla f(x_{t+1})\|^\frac{i}{i-1}\ls \frac{(i-1)!}{\eta_i \delta_i^t}\rs^\frac{1}{i-1},
    \end{gather*}
    where we used $\frac{\alpha}{(i-2)r^{i-2}} + \frac{\beta r^i}{i} \geq \frac{2}{i} \alpha^\frac{i}{2(i-1)}\beta^\frac{i-2}{2(i-1)}$.
    
    \item If $\frac{M}{(p-1)!}r_{t+1}^{p-1} \geq \bar{\delta}_t + \sum \limits_{i=3}^p \frac{\eta_i \delta_i^t}{(i-1)!}r_{t+1}^{i-1}$, then similarly to previous case, we get
    \begin{gather*}
        \frac{\tau^2}{\bar{\delta}_t} + 
        \frac{2}{\bar{\delta}_{t}}\|g(v_t) - \nabla f(v_t) \|^2 
        + 
        \langle \nabla f(x_{t+1}), v_t - x_{t+1} \rangle 
        \\
        \geq 
        \frac{1}{2\zeta_{t+1}}\|\nabla f(x_{t+1}) \|^2
        + 
        \frac{1}{2\zeta_{t+1}}\ls \zeta_{t+1}^2 - 4\left(\sum \limits_{i=2}^p \frac{\delta_i^t}{(i-1)!}r_{t+1}^i + \frac{L_p}{p!}r_{t+1}^{p+1} \right)^2 \rs r_{t+1}^2
        \\
        \frac{(p-1)!}{pMr_{t+1}^{p-1}}\|\nabla f(x_{t+1})\|^2 + \frac{pM}{2p (p!)}r_{t+1}^{p+1}
        \\
        \geq \frac{2}{p}\|\nabla f(x_{t+1})\|^\frac{p+1}{p} \ls \frac{(p-1)!}{M}\rs^\frac{1}{p}.
    \end{gather*}
    
    \end{itemize}
        
\end{proof}

Next we will need technical Lemmas~\ref{lm:argmin},~\ref{lem:dual}.

\begin{lemma}\label{lem:step_p}
    Let $\{x_t, y_t\}_{t \geq 1}$ be generated by Algorithm \ref{alg:inexact_acc_detailed_p_ord}. 
    Then
    \begin{equation}\label{eq:lemma_ass_p}
    \psi_t^{\ast}  = \min \limits_x \psi_t(x)   \geq \frac{f(x_t)}{A_{t-1}} - err_{t}^{v}-err_{t}^{x}
    \aa{-err_{t}^{\tau}},
    \end{equation}
    where 
    \begin{equation}\label{eq:err_v_p}
    err_{t}^{v}  = \sum \limits_{j = 0}^{t - 1}  \frac{\aa{2}}{A_j \bar{\delta}_j}\|g(v_j) - \nabla f(v_j) \|^2 ,
    \end{equation}
    \aa{\begin{equation}\label{eq:err_tau}
    err_{t}^{\tau}  = \sum \limits_{j = 0}^{t - 1}  \frac{\tau^2}{A_j \bar{\delta}_j} ,
    \end{equation}}
    and
     \begin{equation}\label{eq:err_x_p}
    err_{t}^{x}  = \sum \limits_{j = 0}^{t-1}  \frac{\alpha_j^2}{2A_j^2\lambda_j} \|g(x_{j+1})-\nabla f(x_{j+1}) \|^2 + \sum \limits_{j = 0}^{t-1}  \frac{\alpha_j}{A_j} \la g(x_{j+1}) -\nabla f(x_{j+1}), y_{j} - x_{j+1}\ra .
    \end{equation}
\end{lemma}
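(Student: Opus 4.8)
The plan is to mirror the inductive argument of Lemma~\ref{lem:step} essentially verbatim, upgrading only the case analysis from two cases to the $p$ cases furnished by Lemma~\ref{lm:scalar_lb_cases_p}. I would induct on $t$. For the base case $t=0$, setting $A_{-1}$ so that $1/A_{-1}=0$ gives $f(x_0)/A_{-1}=0$ and $\psi_0^\ast = 0$, so \eqref{eq:lemma_ass_p} holds trivially.

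For the inductive step, assume \eqref{eq:lemma_ass_p} at $t$. Since $y_t$ minimizes the linear part $h(x) = \sum_{j=0}^{t-1}\tfrac{\alpha_j}{A_j} l(x, x_{j+1})$ of $\psi_t$, I would apply Lemma~\ref{lm:argmin} with $\theta_2 = \lambda_t + \bk_2^t$ and $\theta_i = \bk_i^t$ for $3 \le i \le p+1$ to obtain
$$\psi_t(y_{t+1}) \geq \psi_t^\ast + \tfrac{\bk_2^t+\lambda_t}{2}\|y_{t+1}-y_t\|^2 + \textstyle\sum_{i=3}^{p+1}\tfrac{1}{i!}\big(\tfrac{1}{2}\big)^{i-2}\bk_i^t\|y_{t+1}-y_t\|^i.$$
Expanding $\psi_{t+1}(y_{t+1})$ via its recursive definition, peeling off the new linear model $\tfrac{\alpha_t}{A_t}l(y_{t+1}, x_{t+1})$, and decomposing $l$ into its exact counterpart plus the two gradient-error inner products, I would use convexity of $f$ together with Young's inequality to absorb the stochastic gradient error into $err_{t+1}^x$ exactly as in \eqref{eq:err_x_p} (the term $\tfrac{\lambda_t}{2}\|y_{t+1}-y_t\|^2$ produced by Young being cancelled by the $\lambda_t$ contribution inside $\theta_2$). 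Combining the two linear models through $v_t = (1-\alpha_t)x_t + \alpha_t y_t$ and the identity $\tfrac{\alpha_j}{A_j} = \tfrac{1}{A_j} - \tfrac{1}{A_{j-1}}$ telescopes the estimate into the term $\tfrac{1}{A_t}\langle \nabla f(x_{t+1}), v_t - x_{t+1}\rangle$, matching the structure of \eqref{eq:x_bar_def}.

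The crux is then to show that the residual after extracting $f(x_{t+1})/A_t$ is non-negative up to the error terms. Here I would invoke Lemma~\ref{lm:scalar_lb_cases_p} to lower bound $\langle \nabla f(x_{t+1}), v_t - x_{t+1}\rangle$ by the minimum over the $p$ power-terms in \eqref{eq:scalar_lb_cases_p}, and split into cases according to which term attains the minimum. In the case where the $i$-th term dominates, I would apply Lemma~\ref{lem:dual} (the Fenchel conjugate of the power-prox) with $z = y_t - y_{t+1}$, $v = \tfrac{\alpha_t}{A_t}\nabla f(x_{t+1})$, and $\theta$ proportional to $\bk_i^t$, to pair $\tfrac{\bk_i^t}{i!}\|y_{t+1}-y_t\|^i$ against $\tfrac{\alpha_t}{A_t}\langle \nabla f(x_{t+1}), y_{t+1}-y_t\rangle$. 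The choices of $\bk_i^t$ in \eqref{eq:p_ord_params}, namely $\bk_i^{t+1} = O\big(\alpha_{t+1}^i \delta_i / A_{t+1}\big)$ for $3\le i\le p$ and $\bk_{p+1}^{t+1} = O\big(\alpha_{t+1}^{p+1}M/A_{t+1}\big)$, are calibrated precisely so the conjugate term cancels the progress term, leaving only $-err_{t+1}^v - err_{t+1}^\tau$ and completing the induction.

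The main obstacle I anticipate is the bookkeeping of the $p$-way case analysis: for each order $i$ one must verify that the exponent $\tfrac{i}{i-1}$ and the constant $\big(\tfrac{(i-1)!}{\eta_i\delta_i^t}\big)^{1/(i-1)}$ appearing in \eqref{eq:scalar_lb_cases_p} are matched exactly by the Fenchel-conjugate exponent and the $\bk_i^t$-dependent constant coming out of Lemma~\ref{lem:dual}, so that the two cancel rather than leaving a residual of the wrong sign. This verification is routine for the second-order case (only two cases), but stating it uniformly across all $i$ and confirming the constant-matching for each requires care; once it is in place, the remainder of the argument is a direct transcription of the proof of Lemma~\ref{lem:step}.
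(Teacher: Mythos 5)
Your proposal follows the paper's proof essentially verbatim: the paper likewise proceeds by induction with the same base case, explicitly states that the inductive step "follows the steps of Lemma~\ref{lem:step}" to arrive at the analogue of \eqref{eq:x_bar_def}, and then performs the $p$-way case split on Lemma~\ref{lm:scalar_lb_cases_p} paired with Lemma~\ref{lem:dual}, with the lower bounds on $\bk_i^t$ (namely \eqref{eq:bar_kappa_p}, \eqref{eq:bar_kappa_p2}, \eqref{eq:bar_kappa_p+1}) chosen exactly to make the conjugate term cancel the progress term. The constant-matching you flag as the main bookkeeping obstacle is indeed where the paper spends its effort, and your outline handles it the same way.
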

\begin{proof}
    We prove Lemma by induction. Let us start with $t=0$, we define $A_{-1}$ such that $\tfrac{1}{A_{-1}}=0$. Then $\tfrac{f(x_0)}{A_{-1}}=0$ and $\psi_0^{\ast}=0$, hence, $\psi_0^{\ast}\geq \tfrac{f(x_0)}{A_{-1}}$. Let us assume that \eqref{eq:lemma_ass_p} is true for $t$
    and show that \eqref{eq:lemma_ass_p} is true for $t+1$.
    
    Following the steps of Lemma~\ref{lem:step} for $p$-th order case, we get
    \begin{equation}
    \label{eq:x_bar_def_p}
        \begin{gathered}
            \psi^{\ast}_{t+1} = \psi_{t+1}(y_{t+1})  \geq \frac{f(x_{t+1})}{A_{t}} + \frac{1}{A_t}\langle \nabla f(x_{t+1}), v_t- x_{t+1}\rangle 
            \\
            + \sum \limits_{i = 2}^{p+1} \left(\frac{1}{2} \right)^{i - 2} \frac{ \bk_i^{t}}{(i - 1)!}d_i(y_{t+1} - y_t) +\frac{\alpha_t}{A_t}\langle \nabla f(x_{t+1}), y_{t+1}- y_t  \rangle - err_{t}^{v}-err_{t+1}^{x}\aa{- err_{t}^{\tau}}.
        \end{gathered}
    \end{equation}

    To complete the induction step we will show, that the sum of all terms in the RHS except $\frac{f(x_{t+1})}{A_t}$ and error terms is non-negative. 
    
    Lemma \ref{lm:scalar_lb_cases_p} provides the lower bound for $\langle \nabla f(x_{t+1}), v_t - x_{t+1}\rangle$. Let us consider the case when the minimum in the RHS of \eqref{eq:scalar_lb_cases_p} is attained at the  term with particular $i = 3, \ldots, p$. 
    By Lemma \ref{lem:dual} with the following choice of the parameters
    $$z = y_t - y_{t+1}, ~~ v_t = \frac{\alpha_t}{A_t}\nabla f(x_{t+1}), ~~ \theta = \left(\frac{1}{2}\right)^{i-2}\frac{\bk_i^{t}}{(i-1)!},$$
    we have 
    \begin{equation}\label{eq:1case_p}
        \frac{1}{i}\left( \frac{1}{2} \right)^{i - 2}\frac{\bk_i^{t}}{(i-1)!}\|y_t -y_{t+1}\|^i + \frac{\alpha_t}{A_t}\langle \nabla f(x_{t+1}), y_{t+1} - y_t \rangle \geq - \frac{i - 1}{i}\left( \frac{\|\frac{\alpha_t}{A_t}\nabla f(x_{t+1})\|^i}{\left(\frac{1}{2}\right)^{i-2} \frac{\bk_i^{{t}}}{(i-1)!}} \right)^\frac{1}{i - 1}.
    \end{equation}
    Hence,
    \begin{gather*}
        \frac{f(x_{t+1})}{A_t} + \frac{1}{A_t}\langle \nabla f(x_{t+1}), v_t - x_{t+1}\rangle + \left(\frac{1}{{2}} \right)^{i - 2} \frac{ \bk_i^{t}}{(i - 1)!}d_i(y_{t+1} - y_t) + \frac{\alpha_t}{A_t}\langle \nabla f(x_{t+1}), y_{t+1} - {y_t}\rangle \\
        \stackrel{\eqref{eq:1case}}{\geq} \frac{f(x_{t+1})}{A_t} + \frac{1}{A_t}\langle \nabla f(x_{t+1}), v_t - x_{t+1}\rangle - \frac{i - 1}{i}\left( \frac{\|\frac{\alpha_t}{A_t}\nabla f(x_{t+1})\|^i}{\left(\frac{1}{{2}}\right)^{i-{2}} \frac{\bk_i^{{t}}}{{(i-1)}!}} \right)^\frac{1}{i - 1} 
        \\
        \stackrel{\eqref{eq:scalar_lb_cases_p}}{\geq}
        \frac{f(x_{t+1})}{A_t} + \frac{2}{p}\|\nabla f(x_{t+1})\|^\frac{i}{i-1}\ls \frac{(i-1)!}{\eta_i \delta_i^t}\rs^\frac{1}{i-1} - \frac{i - 1}{i}\left( \frac{\|\frac{\alpha_t}{A_t}\nabla f(x_{t+1})\|^i}{\left(\frac{1}{{2}}\right)^{i-{2}} \frac{\bk_i^{{t}}}{{(i-1)}!}} \right)^\frac{1}{i - 1}
        \\
         \geq \frac{f(x_{t+1})}{A_t}, 
    \end{gather*}
    where the last inequality holds by our choice of the parameters
    \begin{equation}\label{eq:bar_kappa_p}
        \bk_i^{{t}} \geq \frac{p^{i-1}}{2}\frac{\alpha_t^i}{A_t}\eta_i \delta_i^t.
    \end{equation}

    Let us consider the case when the minimum in the RHS of \eqref{eq:scalar_lb_cases_p} is achieved on the second term. Following similar steps, we get
    \begin{equation}\label{eq:bar_kappa_p2}
        \bk_2^{t} \geq \frac{2p\alpha_t^2}{A_t}\bar{\delta}_t.
    \end{equation}
    
    Next, we consider the case when the minimum in the RHS of \eqref{eq:scalar_lb_cases_p} is achieved on the first term. Again, by Lemma \ref{lem:dual} with the same choice of $z, v$ and with $\theta = {\left(\frac{1}{2}\right)^{p-1}\frac{\bk_{p+1}^{t-1}}{(p-1)!}}$, we have
    \begin{equation}
        \label{eq:2case_p}
        {\frac{1}{p}}\left( \frac{1}{{2}} \right)^{p - 1}\frac{\bk_{p+1}^{t-1}}{{ {p}!}}\|y_t -y_{t+1}\|^ {p+1} + \frac{\alpha_t}{A_t}\langle \nabla f(x_{t+1}), y_{t+1} - {y_t} \rangle \geq - \frac{p}{p+1}\left( \frac{\|\frac{\alpha_t}{A_t}\nabla f(x_{t+1})\|^{p+1}}{{\left(\frac{1}{2}\right)^{p-1} \frac{\bk_{p+1}^{t}}{(p-1)!}}} \right)^\frac{1}{p}.
    \end{equation}

   Hence, we get
    \begin{gather*}
        \frac{f(x_{t+1})}{A_t} + \frac{1}{A_t}\langle \nabla f(x_{t+1}), v_t - x_{t+1}\rangle + \frac{1}{2^{p-1}}\frac{\bk_{p+1}^{t-1}}{ {p}!}d_{p+1}(y_{t+1} - y_t) + \frac{\alpha_t}{A_t}\langle \nabla f(x_{t+1}), y_{t+1} - {y_t}\rangle
        \\
        \stackrel{\eqref{eq:2case_p}}{\geq} \frac{f(x_{t+1})}{A_t} + \frac{1}{A_t}\langle \nabla f(x_{t+1}), v_t - x_{t+1}\rangle  - \frac{p}{p+1}\left( \frac{\|\frac{\alpha_t}{A_t}\nabla f(x_{t+1})\|^{p+1}}{{\left(\frac{1}{2}\right)^{p-1} \frac{\bk_{p+1}^{t}}{ {p}!}}} \right)^\frac{1}{p}
        \end{gather*}
    
        \begin{gather*}
        \stackrel{\eqref{eq:scalar_lb_cases_p}}{\geq}
        \frac{f(x_{t+1})}{A_t} + \frac{1}{A_t}\frac{2}{p}\|\nabla f(x_{t+1})\|^\frac{p+1}{p} \ls \frac{(p-1)!}{M}\rs^\frac{1}{p} -  \frac{p}{p+1}\left( \frac{\|\frac{\alpha_t}{A_t}\nabla f(x_{t+1})\|^{p+1}}{{\left(\frac{1}{2}\right)^{p-1} \frac{\bk_{p+1}^{t}}{ {p}!}}} \right)^\frac{1}{p}\\
         \geq \frac{f(x_{t+1})}{A_t},
    \end{gather*}
    where the last inequality holds by our choice of $\bk_{p+1}^{t}$:
    \begin{equation}\label{eq:bar_kappa_p+1}
        \bk_{p+1}^{t} \geq \frac{(p+1)^{p+1}}{2}\frac{\alpha_t^{p+1}}{A_t}M.
    \end{equation}
    
    To sum up, by our choice of the parameters $\bk_{i}^{t}$, $i=2,...,p$, we obtain
    $$
        \psi_{t+1}^{\ast} \geq \frac{f(x_{t+1})}{A_t}- err_{t+1}^{v}-err_{t+1}^{x}- err_{t+1}^{\tau}.
    $$
\end{proof}

\begin{theorem}
    \label{thm:p_ord_app}
     Let Assumption \ref{as:lip_p} hold and $M \geq \frac{2}{p}L_p$. Let Assumption \ref{as:p_ord_stoch_grad_inexact_hess} hold. After $T \geq 1$ with parameters defined in~\eqref{eq:p_ord_params} and 
            $\sigma_2 = \delta_2 = \max \limits_{t=1, \ldots, T} \delta_t^{v_{t-1}, x_{t}}$
            we get the following bound for the objective residual
            \begin{equation*}
            \begin{aligned}
               \E \left[ f(x_{T}) - f(x^{\ast}) \right]  
               &\leq 
              \frac{2(p+1)^3\tau R}{(T+p+1)^{1/2}} + \frac{3(p+1)^3\sigma R}{(T+p+1)^{1/2}} + \sum \limits_{i=3}^p \frac{2(p+1)^{2i-1}\delta_i R^i}{i! (T+p+1)^i} + \frac{(p+1)^{2(p+1)}}{(p+1)!} \frac{MR^{p+1}}{(T+p+1)^{p+1}}
            \end{aligned}
            \end{equation*}
\end{theorem}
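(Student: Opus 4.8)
The plan is to mirror the second-order argument of Theorem~\ref{thm:2_ord_app}, now driven by the tensor lemmas. First I would chain the lower bound on the estimating sequence from Lemma~\ref{lem:step_p} with the upper bound from Lemma~\ref{lem:upper_seq_p}: for every $t$,
\begin{equation*}
\frac{f(x_{t+1})}{A_t} - err_{t+1}^v - err_{t+1}^x - err_{t+1}^\tau \leq \psi_{t+1}^* \leq \psi_{t+1}(x^*) \leq \frac{f(x^*)}{A_t} + \frac{\bk_2^t + \lambda_t}{2}R^2 + \sum_{i=3}^{p+1}\frac{\bk_i^t}{i!}R^i + err_{t+1}^{up}.
\end{equation*}
Rearranging, multiplying through by $A_T$, and taking expectations bounds $\E[f(x_{T+1})-f(x^*)]$ by $A_T$ times the deterministic regularization block evaluated at $T$ plus the expected error terms. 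Since $\E[err^{up}_{T+1}]$ vanishes by unbiasedness of the gradient \eqref{eq:p_ord_stoch_grad_def}, as does the cross term in $err^x$ \eqref{eq:err_x_p}, only the variance contributions survive.

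Next I would establish the two combinatorial facts that replace their $p=2$ counterparts. With $\alpha_t=\frac{p+1}{t+p+1}$ from \eqref{eq:p_ord_alphas}, the telescoping product gives
\begin{equation*}
A_T = \prod_{t=1}^T \frac{t}{t+p+1} = \frac{T!\,(p+1)!}{(T+p+1)!}, \qquad \text{so} \qquad \frac{1}{A_T} \leq \frac{(T+p+1)^{p+1}}{(p+1)!},
\end{equation*}
together with the summation bound $\sum_{t=0}^T \frac{A_T \alpha_t^i}{A_t} \leq \frac{(p+1)^i}{(T+p+1)^{i-1}}$, valid for any real $i\geq 1$ by an integral comparison. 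These are the tensor analogues of \eqref{eq:A_t_bound}--\eqref{eq:alpha_sum_bound}.

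With the parameter choices $\bar\delta_t = \delta_2 + \frac{\tau+\sigma}{R}(t+p+1)^{3/2}$ and $\lambda_t = \frac{\sigma}{R}(t+p+1)^{p+1/2}$, and the $\bk_i^t$ fixed as in \eqref{eq:bar_kappa_p}, \eqref{eq:bar_kappa_p2}, \eqref{eq:bar_kappa_p+1} (each of the form $\bk_i^T\sim \alpha_T^i/A_T$ times the corresponding inexactness), I would bound the surviving groups term by term. The deterministic block, after substituting the $\bk$'s and $\alpha_T^i=(p+1)^i/(T+p+1)^i$, produces the $\tfrac{\tau R}{\sqrt{T}}$ and $\tfrac{\sigma R}{\sqrt{T}}$ pieces from the $\bar\delta$-part of $\bk_2^T$ together with $\lambda_T$, the $\delta_2$-Hessian contribution $\tfrac{\delta_2 R^2}{(T+p+1)^2}$ from the $\delta_2$-part of $\bk_2^T$ exactly as in the second-order proof, the sum $\sum_{i=3}^p\tfrac{(p+1)^{2i-1}\delta_i R^i}{i!(T+p+1)^i}$ from $\bk_i^T$, and $\tfrac{MR^{p+1}}{(T+p+1)^{p+1}}$ from $\bk_{p+1}^T$, all with the stated $(p+1)$-dependent constants. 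The terms $err^v$ \eqref{eq:err_v_p} and $err^\tau$ \eqref{eq:err_tau} each reduce to $\sigma^2$ (resp. $\tau^2$) times $\sum_j \frac{A_T}{A_j\bar\delta_j}$, which by the choice of $\bar\delta_j$ is a multiple of $\sum_j \frac{A_T\alpha_j^{3/2}}{A_j}$ and hence $O\big(\tfrac{1}{\sqrt{T+p+1}}\big)$.

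The main obstacle will be the term $err^x$, because through \eqref{eq:err_x_p} it carries the factor $\frac{\alpha_j^2}{A_j^2\lambda_j}$ with $A_j$ \emph{squared} in the denominator. Here I would first replace one factor $\frac{1}{A_j}$ by $\frac{1}{A_T}$ (legitimate since $A_j\geq A_T$ for $j\leq T$), turning the sum into $\frac{1}{A_T}\sum_j \frac{A_T\alpha_j^{p+5/2}}{A_j}$ after absorbing $\lambda_j$; the summation bound then yields $\frac{1}{A_T}\cdot\frac{(p+1)^{p+5/2}}{(T+p+1)^{p+3/2}}$, and finally $\frac{1}{A_T}\leq \frac{(T+p+1)^{p+1}}{(p+1)!}$ collapses this to $O\big(\tfrac{\sigma R}{\sqrt{T+p+1}}\big)$. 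The delicate bookkeeping is precisely that the power $p+1$ gained from $1/A_T$ must exactly offset the exponent $p+3/2$ so that a clean $(T+p+1)^{-1/2}$ survives, and that the half-integer exponent $p+5/2$ is admissible in the summation bound. Collecting the four groups and absorbing constants gives the claimed rate.
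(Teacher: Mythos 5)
Your proposal is correct and follows essentially the same route as the paper's own proof: chaining Lemmas~\ref{lem:step_p} and~\ref{lem:upper_seq_p}, killing $err^{up}$ and the cross term of $err^x$ by unbiasedness, invoking the bounds \eqref{eq:A_t_bound_p}--\eqref{eq:alpha_sum_bound_p}, and handling the $A_j^{-2}$ factor in $err^x$ by extracting one $1/A_T$ before applying the summation bound, with the exponent bookkeeping $(p+1)-(p+3/2)=-1/2$ exactly as the paper does. The only remark worth recording is that your derivation (like the paper's proof body) produces an additional $O\bigl((p+1)^3\delta_2 R^2/(T+p+1)^2\bigr)$ term that the theorem statement omits because its sum starts at $i=3$; this appears to be a typo in the statement rather than an issue with your argument.
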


\begin{proof}
     First of all, let us bound $A_T$.
        \begin{equation}
        \label{alpha_t_p}
            \alpha_t = \frac{p+1}{t+p+1}, ~ t \geq 1.
        \end{equation}
        Then, we have
        \begin{equation}\label{eq:A_t_bound_p}
            \frac{1}{(T+p+1)^{p+1}} \leq A_{T} \leq \frac{(p+1)!}{(T+1)^{p+1}}.
         \end{equation}
         And from \cite{agafonov2023inexact} we get
         \begin{equation}
            \label{eq:alpha_sum_bound_p}
             \sum_{t=0}^{T} \frac{A_{T} \alpha_{t}^{i}}{A_{t}} 
            \leq\frac{(p+1)^{i}}{(T+p+1)^{i-1}}
         \end{equation}
        
        From Lemmas \ref{lem:step_p} and \ref{lem:upper_seq_p}, we obtain that, for all $t \geq 1$,
    \begin{gather*}
       \frac{f(x_{t+1})}{A_t} 
            - err_{t+1}^{v}-err_{t+1}^{x} \aa{- err_{t+1}^{\tau}} \stackrel{\eqref{eq:lemma_ass_p}}{\leq} \psi_{t+1}^{\ast} 
            \leq \psi_{t+1}(x^{\ast}) \\
            \stackrel{\eqref{eq:acc_upper_bound_p}}{\leq}
       \frac{f(x^{\ast})}{A_{t}} + \frac{ \bk^{t}_2 + \lambda_t}{2}\|x^{\ast} - x_0\|^2  + \frac{\bk^{t}_{3}}{6}\|x^{\ast} - x_0\|^3 + err_{t+1}^{up}.
    \end{gather*}
    Next, we apply expectation
    \begin{equation} 
    \label{eq:final_sum_p}
        \E \left[ f(x_{T+1}) - f(x^{\ast}) \right]
        \leq 
        A_T \E \left[ \frac{\bk_2^T + \lambda_T}{2}R^2 + \sum \limits_{i=3}^{p+1} \frac{\bk_i^T}{i!}R^i + err_{T+1}^{up} + err_{T+1}^v + err_{T+1}^x + err_{T+1}^\tau \right].
    \end{equation}

    Let us choose 
    \begin{gather}
        \bar{\delta}_t = \delta_2 + \frac{\tau+\sigma}{R}(t+p+1)^{3/2}, \label{eq:delta_p}
        \\ 
        \lambda_t = \frac{\sigma}{R}(t+p + 1)^{p+1/2}. 
        \label{eq:lambda_p}
    \end{gather}
    Then, we bound terms in \eqref{eq:final_sum_p} step by step.

    We start from deterministic terms.
    
    \begin{gather*}
        A_T \E \left[ 
            \frac{\bk_2^T + \lambda_T}{2}R^2 
            + 
            \sum \limits_{i=3}^p \frac{\bk_i^T}{i!}R^i 
            + 
            \frac{\bk_{p+1 }^T}{(p+1)!}R^{p+1}
        \right] 
        \\ 
        \stackrel{\eqref{eq:p_ord_params}}{=} 
        p\alpha_T^2 \bar{\delta}_T R^2 + \sum \limits_{i=3}^p \frac{4p^{i-1}}{i! 2}\alpha_T^i \delta_i R^i+ \frac{(p+1)^{p+1}}{(p+1)!}\alpha_T^{p+1}MR^{p+1}
        \\
        \stackrel{\eqref{eq:delta_p},\eqref{eq:lambda_p}}{=} \frac{(p+1)^3(\tau+ \sigma)R}{(T+p+1)^{1/2}} + \frac{(p+1)^3 \delta_2 R^2}{(T+p+1)^2} + \sum \limits_{i=3}^p \frac{2(p+1)^{2i-1}\delta_i R^i}{i! (T+p+1)^i} + \frac{(p+1)^{2(p+1)}}{(p+1)!} \frac{MR^{p+1}}{(T+p+1)^{p+1}}.
    \end{gather*}   
    
    Now, we bound expectation of all error terms. Firstly, we consider $err_{T+1}^{up}$
    
    \begin{gather*}
        A_T \E \left[err_{T+1}^{up}\right] = A_T \E \left[ \sum \limits_{j = 0}^{T} \frac{\alpha_j}{A_j} \la g(x_{j+1}) - \nabla f(x_{j+1}) , x^{\ast} - x_{j+1}\ra \right] = 0.
    \end{gather*}
    
    Next, we bound $A_T \E \left[err_{T+1}^{v}\right]$
    
    \begin{gather*}
         A_T \E \left[err_{T+1}^{v}\right] 
         = 
         A_T \E \left[ 
            \sum \limits_{j = 0}^{T}  \frac{\aa{2}}{A_j \bar{\delta}_j}\|g(v_j) - \nabla f(v_j) \|^2 
        \right]
        \leq 
        2\sigma^2 \sum \limits_{j=0}^{T} \frac{A_T}{A_j \bar{\delta}_j}\\
        \stackrel{\eqref{alpha_t_p}, \eqref{eq:delta_p}}{=} \frac{2\sigma R}{(p+1)^{3/2}} \sum \limits_{j=0}^{T} \frac{A_T \alpha_j^{3/2}}{A_j} \stackrel{\eqref{eq:alpha_sum_bound_p}}{\leq} \frac{2\sigma R}{(T+p+1)^{1/2}}
    \end{gather*}
    
    Now we calculate $A_T \E \left[err_{T+1}^{x}\right]$
    \begin{gather*}
        A_T \E \left[err_{T+1}^{x}\right] 
        \stackrel{\eqref{eq:err_x}}{=} 
        A_T \E \left[ 
            \sum \limits_{j = 0}^{T}  \frac{\alpha_j^2}{2A_j^2\lambda_j} \|g(x_{j+1})-\nabla f(x_{j+1}) \|^2 
            + 
            \sum \limits_{j = 0}^{T}  \frac{\alpha_j}{A_j} \la g(x_{j+1}) -\nabla f(x_{j+1}), y_{j} - x_{j+1}\ra
        \right]
        \\
        \stackrel{\eqref{eq:lambda_p}}{=} 
        \frac{\sigma R}{2} \sum \limits_{j = 0}^{T} \frac{A_T \alpha_j^2}{A_j^2 (j+3)^{p+1/2}} 
        \stackrel{\eqref{alpha_t}}{=} 
        \frac{\sigma R}{(p+1)^{5/2} 2} \sum \limits_{j = 0}^{T} \frac{A_T \alpha_j^{p+5/2}}{A_j^2} \leq \frac{\sigma R}{(p+1)^{5/2} 2A_T} \sum \limits_{j = 0}^{T} \frac{A_T \alpha_j^{p+5/2}}{A_j}
        \\
        \stackrel{\eqref{eq:alpha_sum_bound}}{\leq} \frac{(p+1)^2\sigma R}{ 2A_T (T+p+1)^{p+3/2}} 
        \stackrel{\eqref{eq:A_t_bound}}{\leq} \frac{(p+1)^2\sigma R}{ 2 (T+p+1)^{1/2}}.
    \end{gather*}

    Finally, we consider $err_{T+1}^\tau$
    \begin{gather*}
        A_T \E \left[err_{T+1}^{\tau}\right] 
        =
        \sum \limits_{j = 0}^{T}  \frac{A_T\tau^2}{A_j \bar{\delta}_j} 
        \stackrel{\eqref{eq:delta_p}}{=} 
        \frac{\tau R}{(p+1)^{3/2}} \sum \limits \frac{A_T \alpha_t^{3/2}}{A_j} 
        \stackrel{\eqref{eq:alpha_sum_bound}}{\leq}
        \frac{\tau R}{(T+p+1)^{1/2}}.
    \end{gather*}

    Combining all bounds from above we achieve convergence rate
    \begin{gather*}
         \E \left[ f(x_{T+1}) - f(x^{\ast}) \right]
        \leq \frac{2(p+1)^3\tau R}{(T+p+1)^{1/2}} + \frac{3(p+1)^3\sigma R}{(T+p+1)^{1/2}} + \sum \limits_{i=3}^p \frac{2(p+1)^{2i-1}\delta_i R^i}{i! (T+p+1)^i} + \frac{(p+1)^{2(p+1)}}{(p+1)!} \frac{MR^{p+1}}{(T+p+1)^{p+1}}.
    \end{gather*}
\end{proof}
\vspace{-0.5cm}
Again, the case of stochastic Hessian (Theorem~\ref{thm:acc_convergence_p_ord} under Assumption~\ref{as:p_ord_stoch}) can be obtained in the same way by taking expectation in Lemma~\ref{lm:scalar_lb_cases_p}.

\section{Restarts for strongly-convex function}

\begin{customthm}{\ref{thm:ACRNM_conv_str_convex}}
Let Assumption \ref{as:lip_str_cvx} hold and let parameters of Algorithm \ref{alg:inexact_acc_detailed} be chosen as in \eqref{eq:p_ord_params}. Let $\{z_s\}_{s \geq 0}$ be generated by Algorithm $\ref{alg:restarts}$ and $R > 0$ be such that $\|z_0 - x^*\|\leq R$. Then for any $s \geq 0$ we have 
    \begin{align}
         \E \|z_{s}-x^*\|^2 &\leq 4^{-s} R^2, 
        \label{eq:restart_conv_arg_APP}
        \\
        \E f(z_{s}) - f(x^*) &
             \leq 2^{-2s-1} \mu R^2.
        \label{eq:restart_conv_func_app}
    \end{align}
Moreover, the total number of iterations to reach desired accuracy $\e:~f(z_s) - f(x^*)\leq \e$ in expectation is
\begin{equation}\label{eq:ACRNM_complexity_app}
    O\left( 
    \tfrac{(\tau + \sigma_1)^2}{\mu \e} 
    +
    \ls\sqrt{\tfrac{\sigma_2}{\mu}} + 1\rs\log\tfrac{f(z_0) - f(x^*)}{\e}
    + 
    \sum \limits_{i=3}^p \left(\tfrac{\sigma_i R^{i-2}}{\mu}\right)^{\frac{1}{i}}
    +  
    \left(\tfrac{L_p R^{p-1}}{\mu}\right)^\frac{1}{p+1} 
    \right).
\end{equation}
\end{customthm}

\begin{proof}
We prove by induction that $\E \|z_s-x^*\|^2\leq 4^{-s}\|z_0-x^*\|^2 = 4^{-s}R^2_0$. For $s=0$ this obviously holds. By strong convexity and convergence of Algorithm~\ref{alg:inexact_acc_detailed_p_ord}
$$
               \E \left[ f(x_{T}) - f(x^{\ast}) \right]  
               \leq 
                \frac{C_\tau \tau R}{\sqrt{T}} + \frac{C_1 \sigma_1 R}{\sqrt{T}} + \sum \limits_{i=2}^p \frac{C_i\sigma_i R^{i}}{T^{i}}  
                + \frac{C_{p+1}L_pR^{p+1}}{T^{p+1}}
$$
we get

\begin{gather*}
    \E_{[z_{s+1}\vert z_s, z_{s-1}, \ldots, z_0]}{ \|z_{s+1}-x^*\|^2}
    \leq 
    \frac{2}{\mu} \E_{[z_{s+1}\vert z_{s}, \ldots, z_0]}{(f(x_{t_{s}+1}) - f(x^*))} 
    \\
    \leq \frac{2}{\mu} \left(\frac{C_\tau \tau R}{\sqrt{T}} + \frac{C_1 \sigma_1 R}{\sqrt{T}} + \sum \limits_{i=2}^p \frac{C_i\sigma_i R^{i}}{T^{i}}  
                + \frac{C_{p+1}L_pR^{p+1}}{T^{p+1}}.\right)
    \\
    \leq \frac{2}{\mu} \left(
    \frac{\mu C_{\tau}\sigma\|{z_s} - x^*\|}{8(p+2) C_{\tau} \sigma \|z_{s} - x^*\|}
    +
     \frac{\mu C_{1}\sigma\|{z_s} - x^*\|}{8(p+2) C_{1} \sigma \|z_{s} - x^*\|}\right.\\
    + 
    \sum \limits_{i=2}^p \frac{\mu C_{i}\delta_i\|{z_s} - x^*\|^{i}}{8 (p+2) C_{i} \delta_i \|z_{s} - x^*\|^{i}} 
    \left.
    + 
    \frac{\mu C_{p+1}L_p\|{z_s} - x^*\|^{p+1}}{8 (p+2) C_{p+1} L_p \|z_{s} - x^*\|^{p+1}}
    \right)
    \\
    \leq \frac{p}{4(p+2)}\|z_s - x^*\|^2
    + 
    \frac{2}{4(p+2)}  r_s \|{z_s}-x^*\|.
\end{gather*}

Then by taking full expectation we obtain 
\begin{gather*}
    \E {\|z_{s+1}-x^*\|^2} \leq \frac{p}{4(p+1)} \E \|z_{s}-x^*\|^2 + \frac{2}{4(p+2)} r_s \E \|z_{s}-x^*\|   \leq \frac{1}{4^{s+1}}R_0^2
\end{gather*}

Thus, by induction, we obtain that \eqref{eq:restart_conv_arg_APP}, \eqref{eq:restart_conv_func_app} hold.

Next we provide the corresponding complexity bounds. From the above induction bounds, we obtain that after $S$ restarts the total number of iterations of Algorithm \ref{alg:inexact_acc_detailed_p_ord}
\begin{gather*}
    \E T = \E \sum_{s=1}^{S} t_{s}
    \leq  \sum_{s=1}^{S} \max
    \left\{1, 
    \ls\tfrac{8(p+2)C_\tau\tau}{\mu  r_{s-1}}\rs^2,
    \ls\tfrac{8(p+2) C_1 \sigma_1 }{ \mu r_{s-1}}\rs^2,
    \max \limits_{i = 1, \ldots, p}\left(\tfrac{8(p+2) C_i \delta_i R_{s-1}^{i-2}}{\mu} \right)^\frac{1}{i}, 
    \left(\tfrac{8(p+2) C_{p+1}L_p R^{p-1}_{s-1}}{\mu}\right)^\frac{1}{p+1}
    \right\}
    \\
    \leq  \sum \limits_{s=1}^S \ls
    1 + \ls\tfrac{8(p+2)C_\tau \tau}{\mu r_{s-1}}\rs^2 + \ls\tfrac{8(p+2) C_1 \sigma_1}{\mu r_{s-1}} \rs^2
    + 
    \textstyle{\sum \limits_{i=2}^p} \left(\tfrac{8(p+2)C_i\delta_i R_{s-1}^{i-2}}{\mu} \right)^\frac{1}{i} 
    +
    \left(\tfrac{8(p+2)C_{p+1}L_p R^{p-1}_{s-1}}{\mu}\right)^\frac{1}{p+1} \rs
    \\
    \leq 
    S 
    +
    \ls \tfrac{8(p+2) C_\tau \tau}{\mu R_0}\rs^2 4^S 
    +
    \ls \tfrac{8(p+2) C_1 \sigma_1}{\mu R_0} \rs^2 4^S
    +
    \left(\tfrac{8(p+2) C_2 \delta_2}{\mu} \right)^\frac{1}{2}S 
    + 
    \textstyle{\sum \limits_{i=3}^p}\left(\tfrac{8(p+2)C_i\delta_i R_0^{i-2}}{\mu} \right)^\frac{1}{i}
    +
    \left(\tfrac{8(p+2) C_{p+1}L_p R^{p-1}_{0}}{\mu}\right)^\frac{1}{p+1}
    \\
    \leq \log \frac{f(z_0) - f(x^*)}{\e} + \ls\tfrac{8(p+2) \tau}{\mu R_0} \rs^2\tfrac{f(z_0) - f(x^*)}{\e}
    + 
    \ls\tfrac{8(p+2) C_1 \sigma_1}{\mu R_0} \rs^2\tfrac{f(z_0) - f(x^*)}{\e}
    \\
    + \left(\tfrac{8(p+2)C_2\delta_2}{\mu} \right)^\frac{1}{2} \log \tfrac{f(z_0) - f(x^*)}{\e} + 
    \textstyle{\sum \limits_{i=3}^p}\left(\tfrac{8(p+2)C_i\delta_i R_0^{i-2}}{\mu} \right)^\frac{1}{i}
    +
    \left(\tfrac{8(p+2)L_p C_{p+1}R^{p-1}_{0}}{\mu}\right)^\frac{1}{p+1}
    .
\end{gather*}
Therefore, the total  oracle complexities are given by \eqref{eq:ACRNM_complexity_app}.
\end{proof}

For the case of stochastic high-order derivative the proof remains same w.r.t. change $\delta_i$ to $\sigma_i$.

\section{\aaa{On the solution of subproblem \eqref{eq:subproblem_2ord} in Algorithm \ref{alg:inexact_acc_detailed}}}

The subproblem \eqref{eq:subproblem_2ord} admits a closed form solution, which we will derive in the following steps. First, note that the problem \eqref{eq:subproblem_2ord} is convex. Let us write optimality condition
$$0 = \nabla \psi_t(y_t) = (\lambda_{t}  + \bk_2^{t} + \bk_3^{t} \|y_{t} - x_0\|)(y_{t} - x_0) + \sum \limits_{j=0}^{t-1} \frac{\alpha_j}{A_j} g(x_{j+1}).$$
Thus,
$$(\lambda_{t}  + \bk_2^{t} + \bk_3^{t} \|y_{t} - x_0\|)\|(y_{t} - x_0)\|  =  \| \sum \limits_{j=0}^{t-1}\frac{\alpha_j}{A_j} g(x_{j+1})\|.$$
Let us denote $r_{t} = \|(y_{t} - x_0)\|$ and $S_t = \| \sum \limits_{j=0}^{t-1}\frac{\alpha_j}{A_j} g(x_{j+1})\|$. Then, we get
$$\bk_3^{t} r_{t}^2 + (\lambda_{t}  + \bk_2^{t})r_{t} - S_t = 0.$$
Next, we get the solution of quadratic equation
$$r_{t} = \frac{\sqrt{(\lambda_{t}  + \bk_2^{t})^2 + 4 \bk_3^{t}S_t } - (\lambda_{t+1} + \bk_2^{t} )}{2\bk_3^{t}}.$$
Finally, we get explicit solution
$$y_{t} = x_0 - \frac{\sum \limits_{j=0}^{t-1}\tfrac{\alpha_j}{A_j} g(x_{j+1}) }{\lambda_t + \bk_2^t + \bk_3^t r_t}.$$
We use the explicit solution in our implementation of the Algorithm~\ref{alg:inexact_acc_detailed}.

\end{document}